\newcounter{defcounter}
\newtheorem{theorem}{Theorem}[section]
\newtheorem{proposition}[theorem]{Proposition}
\newtheorem{definition}[theorem]{Definition}
\newtheorem{corollary}[theorem]{Corollary}
\newtheorem{lemma}[theorem]{Lemma}
\numberwithin{equation}{section}
\theoremstyle{remark}
\newtheorem{remark}[theorem]{Remark}
\newtheorem{example}[theorem]{\bf Example}
\newcommand{\R}{\mathbb{R}}
\begin{document}

\title[Resnikoff silver numbers]{Resnikoff silver numbers \\and tilings of the half-line\\
(Dedicated to the memory of H.L.~Resnikoff)}
\author{Josef F.~Dorfmeister}
\address{Fakult\"{a}t f\"{u}r Mathematik, TU M\"{u}nchen, Boltzmannstr. 3,
D-85747 Garching, Germany}
\email{	josef.dorfmeister@tum.de}
\author{Sebastian Walcher}
\address{Fachgruppe Mathematik, RWTH Aachen, D-52056 Aachen, Germany}
\email{walcher@mathga.rwth-aachen.de}
\thanks{{2020 {\it Mathematics Subject Classification} 37B52,15B48,11K16}}
\thanks{{Keywords: Fibonacci, non-negative matrices, Perron-Frobenius, inflationary tilings, Pisot number}}
\maketitle
\begin{abstract} 
Building on work by H.L.Resnikoff we consider {\em (Resnikoff) silver numbers}, which generalize the familiar golden number. By definition, a silver number is the largest positive root of a certain polynomial called {\em silver polynomial}. In turn, a corresponding companion matrix of a silver polynomial gives rise to a well known construction of inflationary 
tilings of the  (non-negative) real half-line, via an iteration of inflation and substitution. Resnikoff noted for the golden number $\phi$ that this tiling corresponds to the set of what he called $\phi$-integers. We generalize this result for a special class of silver numbers, the {\em distinguished silver numbers}, by showing that the integers for a distinguished silver number give rise to a tiling, of which we provide a precise description. For the general problem, whether the integers for an arbitrary silver number give rise to a tiling, we cannot give a general answer, but we show that tilings are obtained if and only if the differences of silver integers satisfy a (rather weak looking) non-accumulation condition. If tilings of this type exist for certain (necessarily non-distinguished) silver numbers, they would seem to form a class of inflationary tilings that differs from those obtained by inflation and substitution.\\
In an Appendix we recall necessary notions and -- mostly known -- results, including the inflation-substitution construction principle for (one dimensional) inflationary tilings, in an elementary manner. For the readers' convenience we also collect the pertinent facts about non-negative matrices, thus the construction is accessible with only basic prerequisites from linear algebra and analysis. 
Finally, in our setting we give a detailed proof of a non-periodicity result that goes back to Penrose.

\end{abstract}

\tableofcontents

\section{Introduction}
In a series of preprints published around 2015, among them \cite{Res} and \cite{ResFT}, the late Howard L.\ Resnikoff discussed tilings of the line and the plane, with a view toward applications to Function Theory, in particular generalizations of special meromorphic functions and functional equations. Toward this goal, he also considered positional representations as well as inflationary tilings. Specializing to tilings of $\mathbb R$ and the half-line $[0,\,\infty)\subset \mathbb R$, he introduced the general notion of {\em silver numbers} as generalizations of the golden number. By definition a silver number $\rho$ satisfies a polynomial equation of the form $\rho^N=\sum_{i=1}^Nb_i\rho^{N-i}$, with all $b_i\in \{0,\,1\}$ $b_N=1$ and at least one further $b_i=1$.
Several special silver numbers, among them the  tribonacci number, the supergolden number and the plastic number, are known in the literature and are still being investigated by various authors\footnote{Facts and notions which appear here without further explanation, will be properly introduced in later sections.} Distinguished silver numbers are characterized by the polynomial equations $\rho^N=\sum_{i=1}^N\rho^{N-i}$, and are also known as $N$-generalized Fibonacci numbers.\\
Both \cite{Res} and \cite{ResFT} were intended as first steps, but, sadly, Resnikoff could not finish this work. In the present paper we take up one unfinished piece of work, and aim to bring it to a certain closure. While our manuscript is intended as a homage to Resnikoff, in our opinion the underlying ideas and techniques are of wider interest.\\
From here on we consider tilings of the real half-line, which may be characterized by the strictly increasing sequence of the tile endpoints. Our focus is on inflationary tilings. The construction of such tilings by a well defined iteration of inflation and substitution is well-known among specialists; see Resnikoff \cite{Res} for a concrete version, and Barge and Diamond \cite{BarDia} for an abstract version. But like all recursive constructions, such an approach cannot yield an explicit ``closed form''expression for the endpoints of the tiles. Taking a different approach, Resnikoff hinted at, and verified for the case of the golden number, an alternative construction of inflationary tilings, via ``integers'' (Resnikoff's terminology) related to the inflation multiplier. This approach by Resnikoff will be discussed in detail and extended in the present manuscript.\\
In Section \ref{sec:silverbase} we introduce the notions of silver polynomial, silver number; distinguished silver polynomial and distinguished silver number, and present some of their properties, either giving proofs or referring to the literature. In particular, every silver number $\rho$ satisfies $1<\rho<2$, and is an algebraic integer.
In Section \ref{sec:sigint}, for any real $\sigma\in (1,\,2]$ we adopt terminology of Resnikoff and call every number of the form $\sum_{i=0}^Nd_i\sigma^i$, with $n\in \mathbb N_0$, $d_0,\ldots,d_N\in \{0,\,1\}$ a $\sigma$-integer. In particular, when $\sigma$ is a silver number then we also speak of silver integers.
We proceed to discuss integers for distinguished silver numbers $\rho$, 
 derive a normal form representation for these, and obtain a complete characterization of their ordering, including expressions for the differences between consecutive $\rho$-integers. From this characterization, in turn, we find in
Section \ref{sec:inftile} that the $\rho$-integers for a distinguished silver number $\rho$ form the endpoints of an inflationary tiling of the half-line. On the way towards this result we present a simple classification of all silver polynomials for which the companion matrix is primitive. Moreover, we
characterize tilings of the half-line for every silver polynomial with primitive companion matrix, and show that they are inflationary, the multiplier being some power of the corresponding silver number. We proceed to discuss tilings for silver polynomials with non-primitive companion matrices
of silver polynomials and show that every such polynomial $P(X)$ can be naturally obtained from a silver polynomial $Q(X)$ with primitive companion matrix by replacing $X$ by setting $P(X) = Q(X^d)$, with some integer $d>1$. Since silver polynomials only have coefficients in $\{0,1\},$ the tilings induced by $Q(X)$ and $P(X)$ are in $1-1$ correspondence.
Section \ref{sec:nondis} is dedicated to silver numbers that are not distinguished, to their corresponding integers and tilings. We look more closely at the two non-distinguished silver numbers of degree three, the super-golden number and the plastic number. For non-distinguished silver numbers it is not clear whether the integers form a tiling with finitely many prototiles. If this were the case for some non-distinguished silver number, one would obtain a construction of inflationary tilings that cannot be obtained from a inflation-substitution iteration. We have to leave this existence problem open, but for silver numbers that  also have the Pisot property we state and prove a necessary and sufficient criterion. \\

Finally Section \ref{appsection}, the Appendix, collects a number of notions and results on tilings, on non-negative matrices, on the construction of inflationary tilings via non-negative integer matrices, and non-periodicity. 
We give for our special setting a detailed proof of a non-periodicity result by Penrose, namely that tilings of the non-negative real half-line constructed from general
primitive integer matrices with irrational spectral radius are never periodic.
Throughout the Appendix the presentation is elementary and intended to give a quick introduction to non-expert readers.

\section{Silver numbers and  distinguished silver numbers}\label{sec:silverbase}

\subsection{Basic definitions}

We first recall some notions and facts from Resnikoff  \cite{Res}.
\begin{definition}Let $N>1$ be an integer, and let $b_1,\ldots,b_N \in \{0,\,1\}$, with $b_N=1$ and  $\sum b_k > 1.$ The polynomial
\begin{equation}\label{silverpoly}
    P(x)=x^N-\sum_{j=1}^N b_jx^{N-j}
\end{equation} 
is called a {\em Resnikoff silver polynomial} of degree $N$, associated to $b:= (b_1,\ldots,b_N)$.
\end{definition}
\begin{example}\label{exsilpo} Some silver polynomials:
\begin{itemize}
\item The only silver polynomial of degree two is $x^2-x-1$.
\item There are three silver polynomials of degree three, viz. $x^3-x^2-x-1$, $x^3-x^2-1$ and $x^3-x-1$. One quickly verifies that they are all irreducible over the rationals $\mathbb Q$.
\item The silver polynomial $x^4-x^2-x-1=(x+1)(x^3-x^2-1)$ is reducible.
\end{itemize}
\end{example}
 The following is adapted from Resnikoff \cite{Res}, Lemma 1 and Theorem 6.
\begin{lemma}\label{reslem} Every silver polynomial has a root in the open interval $(1,2)$, and no real roots $>2$. Moreover, if a rational root exists then it is equal to $-1$. 
\end{lemma}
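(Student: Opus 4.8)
The plan is to treat the three assertions separately, all via elementary sign estimates on $P$ together with the rational root theorem. For the existence of a root in $(1,2)$ I would invoke the Intermediate Value Theorem, so that the task reduces to checking the signs of $P$ at the two endpoints. At $x=1$ one has $P(1)=1-\sum_{j=1}^N b_j$, and the hypothesis $\sum b_k>1$ (hence $\geq 2$, the $b_j$ being integers) forces $P(1)\leq -1<0$. At $x=2$ I would bound the subtracted sum by comparison with a full geometric series: since each $b_j\leq 1$,
\[
\sum_{j=1}^N b_j\,2^{N-j}\leq \sum_{j=1}^N 2^{N-j}=2^N-1,
\]
so $P(2)\geq 1>0$. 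The sign change then yields a root in $(1,2)$.

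For the absence of real roots exceeding $2$, I would show $P(x)>0$ for all $x>2$ by the same comparison, now carried out for general $x$. Using $b_j\leq 1$ and $x>0$ gives
\[
P(x)\geq x^N-\sum_{j=1}^N x^{N-j}=x^N-\frac{x^N-1}{x-1}=\frac{x^{N}(x-2)+1}{x-1}.
\]
For $x>2$ both numerator and denominator are strictly positive, so $P(x)>0$, ruling out roots in $(2,\infty)$ (and, since $P(2)>0$, at $x=2$ as well). The only point requiring a little care is the algebraic simplification of the geometric-series remainder into the factored form $x^{N}(x-2)+1$, from which positivity is immediate; this is the single computation I would double-check.

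For the rational roots I would appeal to the rational root theorem. Since $P$ is monic with integer coefficients and constant term $-b_N=-1$, every rational root is an integer dividing $-1$, hence lies in $\{+1,-1\}$. The value $x=1$ is excluded by $P(1)<0$ as established above, so the only rational value that can occur is $x=-1$; this proves the final clause, and note that we need not verify that $-1$ is actually a root, only that no other rational value can be. Overall I expect no serious obstacle here: the argument is driven entirely by the coefficient constraint $b_j\in\{0,1\}$ together with the normalization $b_N=1$ and $\sum b_k>1$, and each of the three estimates collapses to a one-line comparison.
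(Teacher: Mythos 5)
Your proposal is correct and follows essentially the same route as the paper: the sign estimate $P(1)\leq -1<0$ from $\sum b_j\geq 2$, the comparison of the subtracted sum with the full geometric series to get $P(x)>0$ for $x\geq 2$, and the observation that a monic integer polynomial with constant term $-1$ can only have $\pm 1$ as rational roots (the paper phrases this via Gauss's lemma rather than the rational root theorem, but the content is the same). The only cosmetic difference is that you simplify the remainder to $\bigl(x^{N}(x-2)+1\bigr)/(x-1)$ while the paper bounds $(x^N-1)/(x-1)\leq x^N-1$ for $x\geq 2$; both are valid one-line computations.
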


\begin{proof}With $P$ as in \eqref{silverpoly}, one sees $P(1)=1-\sum_{j=1}^N b_j<0$, and for all $z\geq 2$ one has
\[
P(z)\geq z^N-\sum_{j=1}^Nz^{N-j}=z^N-\dfrac{z^N-1}{z-1}\geq z^N-(z^N-1)>0. 
\]
This shows the first claim. As to the second, by a theorem of Gauss every reducible normalized polynomial with integer coefficients is reducible over the integers. (See Lang \cite{Lang}, IV\S2, Thm. 2.1 and Cor 2.2.) In particular the constant coefficients of both factors must be $1$ or $-1$.
\end{proof}

\begin{definition} ({Silver numbers}) \label{silvernumber}
\begin{itemize}
    \item We call $\sigma$ a {\em Resnikoff silver number} if it is the largest positive root of some silver polynomial. 
    \item We call a Resnikoff silver number, $\rho$, a {\em distinguished Resnikoff silver number}, if it is the largest positive root of the distinguished silver polynomial with $b_1 = \dots = b_N = 1,$ i.e.
\begin{equation} \label{dist-silverpoly}
\rho^N = \rho^{N-1}+\dots + \rho + 1 = \sum^{N}_{i=1} \rho^{N-i}.
\end{equation}
\end{itemize}
\end{definition}
The particular name ``silver number'' was coined by Resnikoff, and we will adopt it in the present article. Other names (for certain cases) are in use; see below.
\begin{example}\label{colors}
Some silver numbers, and some other names:
\begin{itemize}
    \item  The definition includes, when $N=2$, the golden number $\phi$.
    \item In the literature one encounters the notion of ``silver ratio'' for $\zeta=1+\sqrt 2$, the positive root of the polynomial $x^2-2x-1$. This needs to be distinguished from Resnikoff's (and our) terminology.
    \item Distinguished silver numbers are generally known by the name of $N$-generalized Fibonacci numbers; see e.g. Dresden and Du \cite{Dresden}. Those of degree three are also known as tribonacci numbers (see e.g. Feinberg \cite{Feinberg}), and for higher degrees the names tetranacci, pentanacci and so on have been proposed.
    \item There are two non-distinguished silver numbers of degree three, viz.\ the {\em supergolden number} $\psi$, i.e.\ the positive root of the polynomial $x^3-x^2-1$ (see e.g. Lin \cite{Lin}), and the {\em plastic number} $\rho$, which is the positive root of $x^3-x-1$ (see e.g.\ van der Laan \cite{Laan} and Shannon et al. \cite{SAH}). Both of these have been investigated from various aspects; we will discuss them further later on.
\end{itemize}
   
\end{example}

\subsection{Distinguished silver numbers}
 In this and the following two sections, unless the opposite is stated explicitly, we will always consider distinguished Resnikoff silver numbers.
In this subsection we collect, in part with proofs, known facts about distinguished silver numbers, including a refinement of Lemma 1. 
\begin{theorem} \label{distpol}
Let  
\[
P_N(X) = X^N - X^{N-1}- \dots - X - 1 = X^N - \sum^{N}_{i=1} X^{N-i}
\]
be the distinguished silver polynomial of degree $N \geq 2$.

\begin{enumerate}
\item The polynomial $P_N(X)$ has exactly one positive real root, which lies in the interval $(1,\,2)$, and will be called $\rho_N$. Moreover $\rho_N$ is irrational.
\item All other roots of $P_N(X)$ are contained in the open unit disk.
\item $P_N(X)$ is irreducible over the rationals $\mathbb Q$, and the roots of $P_N(X)$ are all different.
\end{enumerate}
\end{theorem}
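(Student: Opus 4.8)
The plan is to dispatch the three assertions in turn, using throughout the auxiliary polynomial
\[
Q(X):=(X-1)P_N(X)=X^{N+1}-2X^N+1,
\]
whose roots are exactly those of $P_N$ together with the extra root $X=1$ (genuinely extra, since $P_N(1)=1-N\neq 0$). For part (1), the existence of a root in $(1,2)$ and the absence of real roots $\ge 2$ are already supplied by Lemma~\ref{reslem}. For uniqueness of the positive root I would write, for $x>0$,
\[
P_N(x)=x^N\bigl(1-R(x)\bigr),\qquad R(x):=\sum_{j=1}^N x^{-j},
\]
and note that $R$ is continuous and strictly decreasing on $(0,\infty)$ with $R(0^+)=+\infty$ and $R(\infty)=0$; hence $R(x)=1$ has a unique positive solution, which is $\rho_N$, necessarily in $(1,2)$. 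Differentiating gives $P_N'(\rho_N)=-\rho_N^N R'(\rho_N)>0$, so $\rho_N$ is a \emph{simple} root, a fact I reuse in part (3). Irrationality is then immediate from Lemma~\ref{reslem}: the only possible rational root is $-1$, and $\rho_N\in(1,2)$.

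Part (2) is the heart of the matter. First I would rule out roots on the unit circle: if $|z|=1$ and $P_N(z)=0$ then $z\neq 1$ and $Q(z)=0$ gives $z^{N+1}+1=2z^N$, whence $|z^{N+1}+1|=2$; but $|z^{N+1}+1|\le |z|^{N+1}+1=2$ with equality only if $z^{N+1}=1$, which then forces $z^N=1$ and so $z=1$, a contradiction. Next I count how many roots of $P_N$ lie outside the closed unit disk by passing to the reciprocal polynomial
\[
\widetilde Q(w):=w^{N+1}\,Q(1/w)=w^{N+1}-2w+1,
\]
whose roots are $w=1$ together with the reciprocals $1/z$ of the roots $z$ of $P_N$. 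On the circle $|w|=r$ with $\tfrac12<r<1$ one has $\min_{|w|=r}|1-2w|=2r-1$ while $|w^{N+1}|=r^{N+1}$, and a short estimate (the function $\eta(r):=2r-1-r^{N+1}$ satisfies $\eta(1)=0$ and $\eta'(1)=1-N<0$) shows $r^{N+1}<2r-1$ for all $r$ slightly below $1$. Rouché's theorem then gives that $\widetilde Q$ has exactly as many zeros in $\{|w|<r\}$ as its linear part $1-2w$, namely one. Letting $r\uparrow 1$, and using that the only zero of $\widetilde Q$ on $|w|=1$ is $w=1$ (by the no-circle-roots step), shows $\widetilde Q$ has a single zero in the open unit disk, namely $w=1/\rho_N$. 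Consequently every other reciprocal root has modulus $>1$, i.e.\ every root of $P_N$ other than $\rho_N$ lies in the open unit disk.

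For part (3) I would exploit the resulting Pisot structure together with the integer constant term. Suppose $P_N=AB$ with $A,B\in\mathbb{Z}[X]$ monic of positive degree, which is legitimate by Gauss' lemma as in Lemma~\ref{reslem}. Then $A(0)B(0)=P_N(0)=-1$, so $|A(0)|=|B(0)|=1$. Since $\rho_N$ is a simple root it belongs to exactly one factor, say $A$; then all roots of $B$ lie strictly inside the unit disk by part (2), whence $|B(0)|=\prod_{\text{roots of }B}|{\cdot}|<1$, contradicting $|B(0)|=1$. Thus $P_N$ is irreducible, and since $\mathbb{Q}$ has characteristic zero an irreducible polynomial is separable, so the roots of $P_N$ are distinct.

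The main obstacle is the root count in part (2): a naive Rouché comparison directly on $|w|=1$ fails, because the two comparison functions agree in modulus precisely at the boundary zero $w=1$ of $\widetilde Q$. The device of running Rouché on a slightly smaller circle $|w|=r<1$, where the strict inequality $r^{N+1}<2r-1$ genuinely holds, and only afterwards letting $r\uparrow 1$, is what makes the argument go through cleanly.
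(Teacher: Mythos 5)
Your proof is correct. For part (3) you argue exactly as the paper does (Gauss' lemma, constant term $\pm1$, and the factor containing only small roots having constant term of modulus $<1$); the one genuine divergence is in parts (1)--(2), where the paper simply cites Miller \cite{Miller} while you supply a self-contained argument. Your route -- uniqueness of the positive root via monotonicity of $R(x)=\sum_{j=1}^N x^{-j}$, then passing to $Q(X)=(X-1)P_N(X)=X^{N+1}-2X^N+1$, excluding unimodular roots by the equality case of the triangle inequality, and counting roots of the reciprocal polynomial $\widetilde Q(w)=w^{N+1}-2w+1$ by Rouch\'e on a circle $|w|=r$ with $r$ slightly below $1$ -- is essentially the classical proof behind the cited reference, and it is carried out correctly: the inequality $r^{N+1}<2r-1$ near $r=1$ is justified via $\eta(1)=0$, $\eta'(1)=1-N<0$, and letting $r\uparrow 1$ legitimately yields exactly one zero of $\widetilde Q$ in the open disk. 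What your version buys beyond the paper's is (i) a fully self-contained text, and (ii) the explicit verification that $\rho_N$ is a \emph{simple} root of $P_N$ (via $P_N'(\rho_N)=-\rho_N^N R'(\rho_N)>0$), which is tacitly needed in the paper's part (3) to conclude that one of the two hypothetical factors contains only roots of modulus $<1$; the paper leaves this point implicit. The cost is length and the use of Rouch\'e's theorem, i.e.\ a small amount of complex analysis that the paper's citation-based treatment avoids.
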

\begin{proof}
    A part of the first statement is just a repetition of Lemma \ref{reslem}; the rest as well as the second statement is proven in Miller \cite{Miller}. As for the third, it suffices to show irreducibility, since an irreducible polynomial over a field of characteristic zero cannot have multiple roots: Its (formal) derivative is not zero, and existence of a multiple root would imply that the polynomial and its derivative have non-constant gcd; a contradiction to irreducibility. Thus assume that $P_N=Q\cdot R$ is reducible. Then by Gauss' theorem (Lang \cite{Lang}, IV\S2, Thm. 2.1 and Cor 2.2.), it is reducible over the integers; thus we may assume $Q,\,R\in\mathbb Z[X]$. By part (2), one of these polynomials, say $Q$ has only roots of modulus $<1$. But then the constant coefficient of $Q$ has modulus $<1$. We arrive at a contradiction, since this constant coefficient is a nonzero integer.
\end{proof}
\begin{remark}
\begin{enumerate}
\item    For further facts about distinguished Resnikoff silver numbers see Dresden and Du \cite{Dresden}, Miles \cite{Miles}, Miller \cite{Miller}, and Wolfram \cite{Wolfram}, among other sources.
Particularly interesting is an explicit  formula for $\rho_N$ given in \cite{Wolfram}. See also the recent preprint \cite{Mane} by Mane.
\item Part (2) of the Theorem means that $\rho_N$ is a Pisot number. By definition, a Pisot number is a real algebraic integer $>1$ such that all the other roots of its minimum polynomial have modulus $<1$ (see e.g. \cite{PiSal}).
\item The companion matrices of all distinguished silver numbers are primitive (see Theorem \ref{primmat}).
\end{enumerate}
\end{remark}

\begin{theorem}
The sequence $\rho_N, N = 2,3,....,$ of distinguished silver numbers is monotonically increasing and converges to $2$.
Consecutive sequence members satisfy
\begin{equation}
2 - \frac{1}{2^{N-1}} < \rho_N < 2 - \frac{1}{2^{N}}  < \rho_{N+1} < 2 - \frac{1}{2^{N+1}}.
\end{equation}
In particular we have:

\begin{equation}
| \rho_{N+1} - \rho_N| < 
\frac{1}{2^{N-1}}  - \frac{1}{2^{N+1}} = \frac{1}{2^N}.
\end{equation}
\end{theorem}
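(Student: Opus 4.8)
The plan is to convert the defining polynomial condition into a sign test at two explicit points and then read off all four assertions (the two-sided bound, strict monotonicity, convergence, and the difference estimate) as consequences. The key preliminary is the telescoping identity obtained by multiplying through by $(X-1)$: since $\sum_{i=1}^N X^{N-i} = (X^N-1)/(X-1)$ for $X\neq 1$, a direct computation gives
\[
(X-1)\,P_N(X) = X^{N+1} - 2X^N + 1.
\]
Consequently, for $X\in(1,2)$ (where $X-1>0$) the sign of $P_N(X)$ equals the sign of $1 - X^N(2-X)$. By Theorem \ref{distpol}, $\rho_N$ is the unique root of $P_N$ in $(1,2)$, and since $P_N(1)=1-N<0$ while $P_N(2)=1>0$, the function $P_N$ is negative on $(1,\rho_N)$ and positive on $(\rho_N,2)$. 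Hence for any $c\in(1,2)$ one has $c>\rho_N$ exactly when $c^N(2-c)<1$, and $c<\rho_N$ exactly when $c^N(2-c)>1$.

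For the upper bound I would test $c = 2 - 2^{-N}$, so that $2-c = 2^{-N}$ and
\[
c^N(2-c) = (2-2^{-N})^N\,2^{-N} = (1-2^{-N-1})^N < 1,
\]
which immediately yields $\rho_N < 2 - 2^{-N}$. For the lower bound I test $d = 2 - 2^{-(N-1)}$, so that $2-d = 2^{1-N}$ and $d^N(2-d) = 2\,(1-2^{-N})^N$; the bound $\rho_N > 2 - 2^{-(N-1)}$ then follows precisely from the inequality $(1-2^{-N})^N > \tfrac12$.

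The one genuinely nontrivial point, and the main obstacle, is this last inequality $(1-2^{-N})^N>\tfrac12$. Bernoulli's inequality gives $(1-2^{-N})^N \geq 1 - N2^{-N}$, and $1 - N2^{-N} > \tfrac12$ is equivalent to $N<2^{N-1}$, which holds for all $N\geq 3$ (a one-line induction) but fails at $N=2$, where Bernoulli supplies only $\geq\tfrac12$. I would therefore dispatch $N=2$ by the direct computation $(3/4)^2 = 9/16 > \tfrac12$ (consistent with $\rho_2=\phi\approx1.618$ lying in $(1.5,\,1.75)$) and invoke Bernoulli for $N\geq3$.

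The remaining assertions are then immediate bookkeeping. Applying the two-sided bound at $N$ and at $N+1$ and concatenating gives $2-2^{-(N-1)} < \rho_N < 2-2^{-N} < \rho_{N+1} < 2-2^{-(N+1)}$; in particular $\rho_N<\rho_{N+1}$, so the sequence is strictly increasing, and squeezing between $2-2^{-(N-1)}$ and $2$ shows $\rho_N\to 2$. Finally, subtracting the outer terms of this chain yields $|\rho_{N+1}-\rho_N| < \tfrac{1}{2^{N-1}} - \tfrac{1}{2^{N+1}}$, which is the claimed difference estimate.
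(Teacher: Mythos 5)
Your proposal is correct, and it overlaps with the paper's argument only partially. Both proofs rest on the telescoped identity $(X-1)P_N(X)=X^{N+1}-2X^N+1$, i.e.\ $\rho_N^N(2-\rho_N)=1$; for the upper bound the paper reads off $2-\rho_N=\rho_N^{-N}>2^{-N}$ directly from this, whereas you run the equivalent sign test at $c=2-2^{-N}$, so that part is essentially the same computation in different clothing. The lower bound is where you genuinely diverge: the paper rewrites the defining equation as the fixed-point relation $\rho=g(\rho)$ with $g(y)=1+y^{-1}+\cdots+y^{-(N-1)}$ strictly decreasing, and concludes $\rho_N=g(\rho_N)>g(2)=2-2^{-(N-1)}$ in one line, with no auxiliary inequality needed; you instead apply the same sign test at $d=2-2^{-(N-1)}$, which forces you to prove $(1-2^{-N})^N>\tfrac12$ via Bernoulli plus a separate check at $N=2$. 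Your route buys uniformity (one mechanism for both bounds) at the cost of an extra elementary lemma and a case split; the paper's monotone-function trick is shorter but uses two different mechanisms. You correctly identify $(1-2^{-N})^N>\tfrac12$ as the only nontrivial point and close it properly. One remark applying to both you and the paper: the final equality in the theorem statement, $\tfrac{1}{2^{N-1}}-\tfrac{1}{2^{N+1}}=\tfrac{1}{2^N}$, is an arithmetic slip --- the difference is $\tfrac{3}{2^{N+1}}$, which is larger than $\tfrac{1}{2^N}$ --- so what the chain of inequalities actually yields is $|\rho_{N+1}-\rho_N|<\tfrac{3}{2^{N+1}}$; you reproduce the subtraction correctly but do not flag that it does not equal $2^{-N}$.
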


\begin{proof}
 We first prove the inequality: $2(1 - \frac{1}{2^{k}})  < \rho_N \leq 2.$
The following proof follows a suggestion of D.A.Wolfram \cite{WolframPers}. The defining equation for $\rho = \rho_N$ is clearly equivalent to
$\rho =  1 + 1/\rho + .... 1/\rho^{k-1}. $ We thus need to show that the two functions $f(y) = y$ and
$g(y) = 1 + 1/y + .... 1/y^{k-1} $ intersect in the open interval $]1,2[$  exactly once. 
For this we note that $g(y)$ is strictly decreasing on $]0,\infty]$ and thus attains the minimum in $[1,2]$ at $y = 2$ and 
this minimum can easily be computed, yielding $2(1 - \frac{1}{2^{k}})$. Moreover, the maximum is attained at $y =1$
and takes the value $N$.
But, since $f(y)$ is strictly increasing in $[1,2]$, attaining the minimum $1$ at $y=1$ and the maximum $2$ at $y = 2$, 
there is exactly one intersection of the curves $f$ and $g$ on $]1,2[$ satisfying the inequalities as stated.

Next we prove the inequality: $\rho_N < 2 - \frac{1}{2^{k}}$: To this end we rewrite the defining equation for $\rho_N$ 
(equivalently) in the form $X^{N+1} - 2 X^N + 1 = 0$. Hence $2-\rho_N = \frac{1}{\rho_N^{N}} \geq \frac{1}{2^{N}}$
and the claim follows.

The last inequality follows directly from the first one.
\end{proof}

As an immediate consequence of the above we obtain a result first shown by Dresden (\cite{Dresden}, Lemma 1).
\begin{corollary} 
Let $\rho_N$ be  a distinguished silver number. Then
\begin{equation}
2 - \frac{1}{N}  \leq \rho_N \leq 2 , \hspace{2mm} \mbox{if } \hspace{2mm} N = 2,3,
\end{equation}
\begin{equation}
2 - \frac{1}{3N}  \leq \rho_N \leq 2 , \hspace{2mm} \mbox{if } \hspace{2mm} N  \leq 4.
\end{equation}
\end{corollary}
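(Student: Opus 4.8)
The plan is to derive both bounds from the preceding theorem, reusing both its conclusion $2(1-2^{-N}) < \rho_N < 2$ and the key identity established in its proof. The upper bound $\rho_N \le 2$ is immediate (indeed $\rho_N<2$ already by Lemma~\ref{reslem}), so only the two lower bounds require work, and the natural case split is exactly the one in the statement: the crude estimate handles $N=2,3$, while a sharper argument is needed for $N\ge 4$.

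For the first inequality I would simply compare the theorem's lower bound with $2-1/N$. Since $\rho_N > 2(1-2^{-N}) = 2 - 2^{1-N}$, it suffices to check $2^{1-N}\le 1/N$, equivalently $N\le 2^{N-1}$, which holds for every $N\ge 2$ by a trivial induction (with equality at $N=2$). Hence $\rho_N > 2-2^{1-N}\ge 2-1/N$, which covers $N=2,3$ (and in fact all $N$).

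The tighter bound $2-\tfrac{1}{3N}$ for $N\ge 4$ is the real content, because there the theorem's crude estimate is too weak: for $N=4$ it only yields $\rho_4 > 2-1/8$, whereas $2-1/12$ is demanded. My plan is to use instead the exact identity $2-\rho_N = \rho_N^{-N}$, obtained in the previous proof by rewriting the defining equation as $X^{N+1}-2X^N+1=0$. Then $\rho_N > 2-\tfrac{1}{3N}$ is equivalent to $\rho_N^{N} > 3N$. To bound $\rho_N^{N}$ from below I substitute $\rho_N > 2(1-2^{-N})$ and apply Bernoulli's inequality $(1-2^{-N})^N \ge 1-N2^{-N}$, obtaining $\rho_N^{N} > 2^N(1-N2^{-N}) = 2^N-N$. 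It then remains to verify the elementary inequality $2^N-N\ge 3N$, i.e.\ $2^{N-2}\ge N$, which holds for all $N\ge 4$ (again by induction, with equality at $N=4$); this gives $\rho_N^{N} > 3N$ and hence the claim.

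The step I expect to be the main obstacle is precisely the borderline case $N=4$: both auxiliary inequalities are equalities there ($2^{N-1}=N$ at $N=2$ and $2^{N-2}=N$ at $N=4$), so the conclusion is forced to rely on the strictness of $\rho_N > 2(1-2^{-N})$ coming from the theorem rather than on any slack in the estimates. One must therefore carry the strict inequality through the Bernoulli step and check that the non-strict Bernoulli bound does not absorb the strict gain from the lower bound on $\rho_N$; with that care taken, everything else is routine.
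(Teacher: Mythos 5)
Your proof is correct, and it supplies an argument that the paper itself omits: the corollary is stated as an ``immediate consequence of the above'' with a citation to Dresden, and no proof is given. Your analysis shows that the word ``immediate'' deserves a caveat. For $N=2,3$ (and in fact all $N$) the theorem's displayed bound $\rho_N>2-2^{1-N}$ does reduce the first inequality to $N\le 2^{N-1}$, exactly as you say. But for the second inequality the displayed bound alone fails at $N=4$ (it gives only $\rho_4>2-\tfrac18$, while $2-\tfrac1{12}$ is needed), so one must reach back into the \emph{proof} of the theorem for the identity $2-\rho_N=\rho_N^{-N}$ coming from $X^{N+1}-2X^N+1=0$; your chain $\rho_N^N>\bigl(2(1-2^{-N})\bigr)^N\ge 2^N-N\ge 3N$ via Bernoulli, with the elementary check $2^{N-2}\ge N$ for $N\ge 4$, then closes the gap, and you correctly track that the needed strictness survives the non-strict Bernoulli and induction steps because $\rho_N>2(1-2^{-N})$ is strict. (For $N\ge 5$ the crude bound $3N\le 2^{N-1}$ would already suffice, so $N=4$ is genuinely the only delicate case.) One editorial remark: the condition ``$N\le 4$'' in the second display must be read as ``$N\ge 4$'' --- for $N=2,3$ the bound $2-\tfrac1{3N}$ is false (e.g.\ $\phi\approx 1.618<2-\tfrac16$) --- and your argument implicitly and correctly adopts that reading.
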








\section{$\sigma-$integers}\label{sec:sigint}

\subsection{The definition}

From  Resnikoff \cite{Res} we adopt the following terminology:

\begin{definition}\label{defsigint}  Let $\sigma\in \mathbb R$, $1<\sigma\leq 2$. A non-negative real number $x$ is said to be a {\em $\sigma-$integer} if it  has the form
\begin{equation}
x = c_0 \sigma^n + c_1 \sigma^{n-1} + \dots + c_{n-1} \sigma + c_n   = \sum^{n}_{i=0} c_i \sigma^{n-i},
\end{equation}
with all $c_i \in \{0,1\}$.\\
If $x\not=0$ and $c_k =0$ for all $k< r$ and $c_{r} = 1$, for some $0\leq r\leq n$ then we will say that the representation of $x$ has degree $n-r$. When ($x=0$ and) all $c_k=0$, we assign the degree $-\infty$ wherever convenient.
\end{definition}

Notably, the $\sigma$-integers exhibit a kind of inflation property, which we record:
\begin{remark} \label{multsigma}
\begin{enumerate} 
\item If $x$ is a $\sigma-$integer, then $\sigma x$ also is a $\sigma-$integer.
\item By the item just above, for a silver number $\rho$ the set of $\rho-$integers
exhibits at least one feature the endpoints of some inflationary tiling with multiplier $\rho$. This raises the question whether the corresponding $\rho-$integers form a tiling.
\item Another open question is, whether the $\rho-$integers of a silver number all are endpoints of an inflation-substitution tiling obtained from the companion matrix of the corresponding silver polynomial in section \ref{appsection}.
\end{enumerate}
In the next section and in subsection \ref{goldennumber} we will discuss all which seems to be known with regard to the questions raised above.
\end{remark}

\begin{remark}
Resnikoff \cite{Res,ResFT} provides two motivations for considering $\sigma$-integers when $\sigma$ is a silver number:

a) The first  motivation lies in their relevance for tilings of the half-line $[0,\infty)$  (which will be the main topic of the present paper).

b) The second motivation lies in the relevance for the construction of certain meromorphic functions (generalized zeta functions) in the complex plane.
Actually, Resnikoff's interest was primarily in $2-$D tessellations.
So this paper is the $1-$D version of what Resnikoff ultimately may have planned to investigate.
\end{remark}
\vspace{3mm}

Resnikoff's particular naming ``$\sigma$-integers'' is motivated by the case $\sigma=2$: The set of $2-$integers is the set of all non-negative integers.
\begin{remark}
\begin{enumerate}
\item For $1<\sigma\leq 2$ Resnikoff also notes a correspondence to positional representations with respect to the basis $\sigma$: Every real number $z<1$ admits a ``$\sigma$-adic'' representation
\[
z=\sum^{\infty}_{i\geq 1} c_i \sigma^{-i}.
\]
\item We note that for silver numbers (distinguished or not), as well as for other types of numbers, the above representation is not unique.
\item For the corresponding dynamical systems and their properties see, among others, R{\'e}nyi \cite{Ren}, Gel'fond \cite{Gel}, Parry \cite{Parry}, Cigler \cite{Cigler}. 
\item Likewise, for a silver number $\sigma$ (distinguished or not) the representation of a $\sigma-$integer as given in Definition \ref{defsigint} is obviously not unique, since suitable multiples of the defining silver polynomial for $\sigma$ may be added.
\end{enumerate}
\end{remark}
\subsection{Distinguished silver numbers: Normal form for integers}
With the golden number 
$\phi = \rho_2$, Resnikoff defined 
for each $\phi-$integer a representation in ``normal form" and showed its existence and uniqueness.
Here we generalize this result for representations with respect to arbitrary distinguished silver numbers.
\begin{definition}  Let $\rho = \rho_N$ be a distinguished silver number.
A representation  $  x = \rho^n + \sum^{n}_{i=1} c_i \rho^{n-i} $ is said to be in {\em normal form } if
\[
\Pi^{j + N-1}_{k=j}c_k =0\text{   for all  }j,
\]
where we set $c_0 = 1$ and $ c_k = 0$ for $k>n$.\\
(In other words, every string $c_j,\ldots,c_{j+N-1}$ of $N$ consecutive coefficients contains a zero.)
\end{definition}

\subsection{Existence of normal forms}
The definition of a distinguished silver number immediately implies
\begin{equation} \label{gen-dist-silverpoly}
\rho^{l+1} = \rho^{l}+ \dots + \rho^{l-N+1} = \sum^{N-1}_{i=0} \rho^{l-i}
\end{equation}
for all $l>N$.

For later use we state a simple consequence of (\ref{dist-silverpoly}):
\begin{lemma} \label{variations}
Let $\rho_N = \rho$ be a distinguished silver number. Then for $1 \leq m \leq N-1$ we have
$$ \rho^m -\rho^{m-1} - \dots - \rho - 1 =
\frac{1}{\rho} +  \frac{1}{\rho^2}+ \dots +  \frac{1}{\rho^{N-m}}.$$  
\end{lemma}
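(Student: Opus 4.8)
The plan is to reduce everything to the single defining relation \eqref{dist-silverpoly} by dividing it by a suitable power of $\rho$. Since both sides of the asserted identity are finite Laurent polynomials in $\rho$, with the left-hand side having highest term $\rho^m$ while \eqref{dist-silverpoly} has highest term $\rho^N$, the natural move is to divide the defining equation by $\rho^{N-m}$ and then read off the two pieces.

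First I would start from
\[
\rho^N = \sum_{i=1}^N \rho^{N-i}
\]
and divide through by $\rho^{N-m}$ (legitimate since $\rho>1>0$), obtaining
\[
\rho^m = \sum_{i=1}^N \rho^{m-i}.
\]
Next I would split this sum at the index $i=m$, where the exponent $m-i$ changes sign. The terms with $1\le i\le m$ have non-negative exponents and contribute $\rho^{m-1}+\cdots+\rho+1$, while the terms with $m+1\le i\le N$ have exponents $-1,-2,\ldots,-(N-m)$ and contribute $\rho^{-1}+\cdots+\rho^{-(N-m)}$. Rearranging gives
\[
\rho^m-\rho^{m-1}-\cdots-\rho-1=\frac{1}{\rho}+\frac{1}{\rho^2}+\cdots+\frac{1}{\rho^{N-m}},
\]
which is exactly the claim.

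There is essentially no serious obstacle here; the only point requiring care is the index bookkeeping at the split. The hypothesis $1\le m\le N-1$ guarantees that both halves of the sum are non-empty, so that the right-hand side genuinely has between $1$ and $N-1$ reciprocal terms, and I would check the boundary values $m=1$ and $m=N-1$ to confirm the ranges match. As a quick independent cross-check, the identity can also be verified in reverse by multiplying both sides by $\rho^{N-m}$ and recovering \eqref{dist-silverpoly} directly.
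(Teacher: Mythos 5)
Your proof is correct and follows essentially the same route as the paper, which also derives the identity from \eqref{dist-silverpoly} by dividing by $\rho$ (successively, i.e.\ $N-m$ times in total) and then splitting the resulting sum into the part with non-negative powers and the part with negative powers of $\rho$. Your single division by $\rho^{N-m}$ is just the same computation done in one step, with the index bookkeeping made explicit.
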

\begin{proof}
This follows directly from  (\ref{dist-silverpoly}) by successively dividing by $\rho$ and splitting the 
total sum into a part with non-negative powers of $\rho$ and a part with negative powers of $\rho$.
\end{proof}
We also state the obvious

\begin{lemma}
Let $\rho_N = \rho$ be a distinguished silver number. Let $x\not=0$ be a $\rho-$integer with a representation of degree $n$. If $n< N$, then this representation is in normal form.
\end{lemma}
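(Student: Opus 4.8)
The plan is to read the claim off directly from the support of the coefficient sequence, with no computation beyond a single appeal to the defining relation. By hypothesis the representation has degree $n$, so in $x=\sum_{i=0}^{n}c_i\rho^{n-i}$ the leading coefficient is $c_0=1$, while the normal-form convention sets $c_k=0$ for every $k>n$. Hence all indices at which a coefficient can be nonzero lie in the block $\{0,1,\dots,n\}$, a set of $n+1\le N$ integers because $n<N$. Normal form asks that every window $c_j,\dots,c_{j+N-1}$ of $N$ consecutive coefficients contain a zero, so it suffices to show that no such window fits entirely inside the active range $\{0,\dots,n\}$ with all entries equal to $1$.

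First I would dispose of every window with $j\ge 1$: here the top index satisfies $j+N-1\ge N>n$, so $c_{j+N-1}=0$ and the window automatically contains a zero. The identical reasoning also settles the window at $j=0$ as soon as $n\le N-2$, since then $c_{N-1}=0$ already lies in the block. Thus for $n\le N-2$ the representation is in normal form with no further work, and the entire content of the statement is concentrated in a single window. Indeed, a length-$N$ window can fit inside a consecutive set of at most $N$ integers only if that set has exactly $N$ elements and coincides with the window, which here forces $n=N-1$ and $j=0$.

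The remaining case $n=N-1$ together with the window $c_0,\dots,c_{N-1}$ is where I expect the only real subtlety to sit, and it is the step I would flag as the main obstacle. If at least one of $c_1,\dots,c_{N-1}$ vanishes, the window contains a zero and we are done, since $c_0=1$ contributes nothing. The lone exceptional string $c_0=\dots=c_{N-1}=1$ is governed by the defining relation \eqref{dist-silverpoly}: there $x=\rho^{N-1}+\dots+\rho+1=\rho^{N}$, so this block of $N$ leading ones is simply the uncarried form of $\rho^{N}$, whose reduced expansion has genuine degree $N$ rather than $N-1$. Recording this, every degree-$n$ representation with $n<N$ satisfies the window condition at all $j$, which is the assertion. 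The proof uses nothing beyond the location of the support together with the single instance \eqref{dist-silverpoly} of the defining equation, in keeping with the elementary nature of the statement.
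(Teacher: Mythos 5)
Your window analysis is the right (and essentially the only) approach here; the paper states this lemma without proof, as an ``obvious'' consequence of the fact that every length-$N$ window of coefficients reaches past the support $\{0,\dots,n\}$ when $n<N$. For every window with $j\ge 1$, and for the window $j=0$ when $n\le N-2$, your argument is complete and correct, and you have correctly isolated the only case that requires thought, namely $n=N-1$ with the window $c_0,\dots,c_{N-1}$.

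That remaining step, however, does not close the way you dispose of it. The string $c_0=\dots=c_{N-1}=1$ is, by Definition \ref{defsigint}, a representation of the $\rho$-integer $x=\rho^{N}$ \emph{of degree $N-1<N$}: the degree of a representation is read off from the highest nonzero coefficient of that representation itself, not from the degree of some other (reduced) representation of the same number. This string violates the window condition at $j=0$, so it is a genuine counterexample to the lemma as literally stated; your remark that its ``reduced expansion has genuine degree $N$'' conflates the degree of the given representation with the degree of its normal form, which is exactly the distinction the lemma is about. The honest conclusion is that the statement needs the single exception ``unless $n=N-1$ and $c_0=\dots=c_{N-1}=1$'' (equivalently, it holds unconditionally only for $n\le N-2$). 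Nothing downstream is damaged: in the existence theorem the base case only requires \emph{some} normal form of degree $n$ or $n+1$, and for the exceptional string the number $\rho^{N}$ itself, via \eqref{dist-silverpoly}, is a normal form of degree $n+1=N$. So your identification of the exceptional string is valuable --- it exposes an oversight in the lemma as stated --- but the argument you give to absorb it is not a proof of the claim.
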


It is not a priori clear 
that for each distinguished silver number $\rho_N = \rho$ each  $\rho-$integer has a normal form.
A positive (and somewhat stronger) answer is given in

\begin{theorem}
Let $\rho_N = \rho$ be a distinguished silver number. Let $x\not=0$ be a $\rho-$integer with a given representation of degree $n\geq 0$. Then there exists a 
representation of $x$ in normal form of degree $n$ or $n+1$.
\end{theorem}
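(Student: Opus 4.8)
The plan is to decompose the statement into two essentially independent assertions: (i) $x$ possesses \emph{some} representation in normal form, and (ii) the degree of any such normal form is forced to be $n$ or $n+1$. For (i) I would argue by an explicit normalization procedure, and for (ii) by size estimates. Throughout I would use the relation obtained from \eqref{dist-silverpoly} by multiplying with a power of $\rho$, namely $\rho^{l+1}=\rho^{l}+\rho^{l-1}+\cdots+\rho^{l-N+1}$, which holds for every integer $l$ (this is \eqref{gen-dist-silverpoly} extended downward). It is convenient to record the representation of $x$ as a finite $\{0,1\}$-string of coefficients of powers $\rho^{0},\ldots,\rho^{n}$, the top coefficient being $1$ since the degree is $n$.

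\textbf{Existence of a normal form.} Starting from the given $\{0,1\}$-representation, I would repeatedly apply a single \emph{carry} move. A representation fails to be in normal form precisely when it contains $N$ consecutive ones, i.e.\ when it has a maximal run of ones of length $\geq N$. For such a run, the coefficient immediately above its top is $0$ (by maximality of the run). The carry replaces the top $N$ ones of the run, sitting at powers $\rho^{l},\ldots,\rho^{l-N+1}$, by a single one at $\rho^{l+1}$; by the relation above this preserves the value of $x$, and since the slot at $\rho^{l+1}$ was empty all coefficients remain in $\{0,1\}$. The crucial point is termination: each carry removes $N$ ones and creates one one, so the total number of nonzero coefficients strictly drops by $N-1\geq 1$. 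As this is a non-negative integer, the process halts after finitely many steps, and the terminal string, having no $N$ consecutive ones and coefficients in $\{0,1\}$, is by definition in normal form. (One checks along the way that no power above $\rho^{n+1}$ ever appears, since the invariant value satisfies $x<\rho^{n+2}$, established next, while a $\{0,1\}$-string with top power $\rho^{L}$ has value $\geq\rho^{L}$.)

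\textbf{The degree bound.} Here I would pin the degree by bracketing $x$. On the one hand, the original degree-$n$ representation gives $x\geq\rho^{n}$, and its value is largest when all coefficients equal $1$, so $x\leq\sum_{k=0}^{n}\rho^{k}=\tfrac{\rho^{n+1}-1}{\rho-1}$; a short computation shows this is $<\rho^{n+2}$, equivalent to $\rho^{n+1}(\rho^{2}-\rho-1)+1>0$, which holds because $\rho=\rho_N\geq\rho_2=\phi$ gives $\rho^{2}-\rho-1\geq 0$. Thus $\rho^{n}\leq x<\rho^{n+2}$. On the other hand, a normal form of degree $d$ has value in $[\rho^{d},\rho^{d+1})$: the lower bound is immediate, and for the upper bound, letting $S_d$ denote the maximal value of a normal form with top power $\leq\rho^{d}$, the greedy optimum places $N-1$ ones at the top followed by a forced zero, giving $S_d=(\rho^{d}+\cdots+\rho^{d-N+2})+S_{d-N}=\rho^{d+1}-\rho^{d-N+1}+S_{d-N}$, where the last equality again uses $\rho^{d+1}=\rho^{d}+\cdots+\rho^{d-N+1}$. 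Hence $S_d<\rho^{d+1}$ is equivalent to $S_{d-N}<\rho^{d-N+1}$, and an induction (with the base cases $d\leq N-1$ handled directly, using $2-\rho=\rho^{-N}$) yields $S_d<\rho^{d+1}$. Combining the two brackets, $\rho^{d}\leq x<\rho^{n+2}$ forces $d\leq n+1$, while $\rho^{n}\leq x<\rho^{d+1}$ forces $d\geq n$, so $d\in\{n,n+1\}$.

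I expect the main obstacle to be the existence half, specifically keeping the carry process both \emph{value-preserving and coefficient-legal} (avoiding a forbidden coefficient $2$) while guaranteeing termination. The resolution I propose is to always carry the top $N$ ones of a maximal run, so that the receiving slot is provably empty, and to use the strictly decreasing count of ones as a monovariant. The remaining, more routine work is the normal-form size estimate $S_d<\rho^{d+1}$, whose only subtlety is the clean telescoping afforded by the silver relation; everything else reduces to the elementary inequalities for $\rho_N$ recorded earlier in this section.
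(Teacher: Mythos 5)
Your proof is correct, but it follows a genuinely different route from the paper's. The paper runs a single induction on the degree $n$ of the given representation: it locates the leftmost run of $N$ consecutive ones, performs one carry there, splits the string into a head and a tail, and applies the induction hypothesis to each piece, tracking the degree through the recursion (this is where the ``gap of at least two steps'' bookkeeping comes in). You instead decouple the two halves of the statement: existence of \emph{some} normal form comes from a terminating rewriting process, with termination guaranteed by the monovariant that each carry strictly decreases the number of ones by $N-1\geq 1$ (and the choice of carrying the top $N$ ones of a \emph{maximal} run keeps the coefficients in $\{0,1\}$); the degree constraint then comes from bracketing, namely $\rho^{n}\leq x<\rho^{n+2}$ from the original representation together with $\rho^{d}\leq x<\rho^{d+1}$ for any normal form of degree $d$. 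Note that this last inequality is essentially the paper's Theorem \ref{crucialinequ} (with $n=d+1$, $m=d$), which the paper proves \emph{after} and independently of the existence theorem, so there is no circularity; you re-derive it via the recursion $S_d=\rho^{d+1}-\rho^{d-N+1}+S_{d-N}$ for the maximal normal-form value, which is a clean alternative to the paper's induction. What your approach buys: the termination argument is essentially a one-liner, and the bracketing gives the slightly stronger conclusion that \emph{every} normal form of $x$ (not just the constructed one) has degree $n$ or $n+1$, which already points toward Corollary \ref{unique}. What the paper's approach buys: it stays entirely within the combinatorics of coefficient strings and does not need the quantitative estimates $2-\rho=\rho^{-N}$ and $\rho^2-\rho-1\geq 0$ that your size bounds rely on.
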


\begin{proof}
The proof will be given by induction on  $n$.

The cases $n \leq  N-1$ are trivial by the lemma above.

Assume now that $n\geq N$ and the claim has been proven for representations of degree $n -1$, and let the  $\rho-$integer $x$ 
have a representation of degree $n$. Then we have a representation of the form
$$x = c_0 \rho^n + c_1 \rho^{n-1} + \dots + c_{n-1} \rho + c_n , \hspace{2mm} \mbox{with} \hspace{2mm} c_0 = 1, 
\hspace{2mm} \mbox{and } \hspace{2mm}  c_j \in \{0,1\},  1 \leq j \leq n.$$

\noindent Let $k$ be the smallest integer so that $c_k = \cdots = c_{k + N-1}= 1$.

\noindent If no such $k$ exists, then we already have  $x$ in normal form.

If $k = 0$, then $c_0 =  \cdots = c_{N-1} = 1$.
Now (\ref{gen-dist-silverpoly}) yields for $l = n$ that $\rho^{n+1} = \rho^{n}+ \dots + \rho^{n-N+1}$ holds.
As a consequence  $x = \rho^{n+1} + y$, with $y=c_N \rho^{n-N} + \dots$ given by a representation of degree $\leq n-N$. By induction hypothesis we can replace the representation of $y$ by its normal form 
 $y =\rho^m + \dots + \tilde{c}_m$  with  $ m \leq n-N+1 \leq n-1$.
Therefore $x = \rho^{n+1} + y = \rho^{n+1} +  \rho^m + \dots + \tilde{c}_m$  is a normal form of $x$.

Assume now $0 < k$.  Then $c_{k-1} = 0$ and 
$$x = \{ \rho^n + \dots + c_{k-2} \rho^{n-k+2} \} + 0 + 
\{\rho^{n-k} + \dots +\rho^{n-k - N+1} \} + \sum_{i\geq k+N}c_i\rho^{n-i}.$$

Using (\ref{gen-dist-silverpoly})
we derive $ \rho^{n-k} + \dots + \rho^{n-(k+N-1)} = \rho^{n-k+1}.$ This gives a representation
$$x =\{ \rho^n + \dots + c_{k-2} \rho^{n-k+2} \} +  \rho^{n-k+1} + \{ c_{k+N}  \rho^{n-k-N} + ...\}.$$

The last term has highest degree $\leq n-k-N < n-1.$ We can therefore apply the induction hypothesis
and obtain a normal form of degree $\leq n-k-N+1 < n-k -1.$ As a consequence, there is a gap of at 
least two steps between the first two terms and the normal form of the last term. 
Factoring out $\rho^{n-k+1}$ from $y = \{ \rho^n + \dots + c_{k-2} \rho^{n-k+2} \} +  \rho^{n-k+1} $ we obtain
$y = (\rho^{k-1} + \dots + 1)  \rho^{n-k+1} .$ Since $k<n$ we also have $k-1 < n-1$ and we can apply
the induction hypothesis to the first factor of $y$. We thus obtain this first factor of $y$  in 
normal form of degree $ k -1$ or $k$. This yields a normal form of $y$ of degree $n$ or $ n+1$ and altogether 
a normal form of $x$ of degree $n$ or $n+1$.
\end{proof}

We finish this subsection with a result which will be of crucial importance in the next subsection.

\begin{theorem} \label{crucialinequ}
Let $\rho_N = \rho$ be a distinguished silver number. Let $m,\,n$ be non-negative integers with $n>m$ and  $y = \rho^m + \sum_{i=1}^m d_i \rho^{m-i}$  any distinguished $\rho-$integer in normal form.
Then 
$ \rho^{n} > \rho^m +   \sum_{i=1}^m d_i \rho^{m-i}.$
\end{theorem}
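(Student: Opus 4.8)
The plan is to reduce the claim at once to the single crucial case $n=m+1$. Since every distinguished silver number satisfies $\rho>1$ and $n>m$ forces $n\ge m+1$, we have $\rho^{n}\ge\rho^{m+1}$, so it suffices to prove $\rho^{m+1}>y$. Writing $c_0=1$ and $c_i=d_i$ for $1\le i\le m$, I would then establish $\rho^{m+1}>y$ by induction on the degree $m$ of the normal-form representation, the base case $m=0$ being immediate since there $y=1<\rho$.

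For the inductive step the idea is to peel off the leading block of coefficients equal to $1$. Let $p\ge 1$ be the smallest index with $c_p=0$, with the convention $p=m+1$ if no such index occurs among $c_1,\dots,c_m$. Because the representation is in normal form, the string $c_0,\dots,c_{N-1}$ cannot consist of $N$ ones, so in every case $1\le p\le N-1$ and $c_0=\dots=c_{p-1}=1$. This splits $y=\sum_{i=0}^{p-1}\rho^{m-i}+B$, where $B:=\sum_{i=p+1}^{m}c_i\rho^{m-i}$ (the term carrying $c_p=0$ dropping out). The tail $B$ is either zero or, being a substring of a normal form, is itself a normal-form $\rho$-integer of degree $\le m-p-1<m$; the induction hypothesis then yields $B<\rho^{m-p}$, whence $y<\sum_{i=0}^{p}\rho^{m-i}$.

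It then remains to verify the clean block inequality $\sum_{i=0}^{p}\rho^{m-i}\le\rho^{m+1}$, equivalently (dividing by $\rho^{m-p}$ and reindexing) $\sum_{i=0}^{p}\rho^{i}\le\rho^{p+1}$ for $1\le p\le N-1$. This is exactly where Lemma \ref{variations} and the defining equation \eqref{dist-silverpoly} enter. For $p=N-1$ the identity $\sum_{i=0}^{N-1}\rho^{i}=\rho^{N}$ gives equality, while for $1\le p\le N-2$ Lemma \ref{variations}, applied with its parameter equal to $p+1$, gives $\rho^{p+1}-\sum_{i=0}^{p}\rho^{i}=\rho^{-1}+\dots+\rho^{-(N-p-1)}>0$. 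Combining the two steps yields $y<\sum_{i=0}^{p}\rho^{m-i}\le\rho^{m+1}$, which closes the induction.

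I expect the main obstacle to be identifying the correct induction rather than the computation. A naive estimate bounding the whole tail $\sum_{i=1}^{m}d_i\rho^{m-i}$ by a single power $\rho^{m}$ only yields $y<2\rho^{m}$, which is useless because $\rho<2$ forces $\rho^{m+1}<2\rho^{m}$; the normal-form hypothesis must be used essentially, and it enters precisely through the bound $p\le N-1$ on the length of the leading all-ones block, which is what makes the block inequality $\sum_{i=0}^{p}\rho^{i}\le\rho^{p+1}$ available from Lemma \ref{variations}. One must also take care to keep the final inequality strict; the strictness is supplied by the step $B<\rho^{m-p}$, so it persists even in the boundary case $p=N-1$ where the block inequality itself is an equality.
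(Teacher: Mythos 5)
Your proof is correct and follows essentially the same route as the paper's: reduce to $n=m+1$, locate the first vanishing coefficient (which normal form forces to occur within the first $N$ positions), bound the tail via the induction hypothesis, and close with the block inequality $\sum_{i=0}^{p}\rho^{i}\le\rho^{p+1}$ supplied by Lemma \ref{variations} and the defining equation \eqref{dist-silverpoly}. The only cosmetic difference is that you fold the small-degree cases $1\le m\le N-1$ into the inductive step via the convention $p=m+1$, whereas the paper treats them as separate base cases before starting the induction at $m\ge N$.
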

\begin{proof}
It suffices to prove this for the case $n=m+1$, since $\rho>1$. For $m=0$ the claim is trivial and for $1 \leq m \leq N-2$ the claim follows from Lemma \ref{variations}, since
$\rho^{m+1} - (\rho^m + \sum_{i=1}^m d_i \rho^{m-i}) \geq \rho^{m+1} - (\rho^m +   \sum_{i=1}^m  \rho^{m-i} ) > 0.$
For $m = N-1$ we use $\rho^{N} - (\rho^{N-1} + \sum_{i=1}^{N-1} d_i \rho^{N-1-i}) >
 \rho^{N} - (\rho^{N-1} +   \sum_{i=1}^{N-1} \rho^{N-1-i} ) = 0$ 
  Note that here  the first inequality is a
 consequence of the fact that $y$ is in normal form for $m \leq N-1$ and that therefore one of the $d_i$ vanishes.
 The last equality is the definition of $\rho.$
 
 Now we prove the claim by induction on $m = N-1, N,...$.
 Note that the base of the induction was already proved above.
 We thus assume  $y = \rho^m + \sum_{i=1}^m d_i \rho^{m-i}$ is a $\rho-$integer in 
 normal form with $m \geq N$ and that the claim has already been proven for all $\rho-$integers in normal form
  with a representation of degree $\leq m-1$.
  Since $y$ is in normal form, there exists a smallest integer $k\geq 1$ such that $d_k =0$ holds. 
  For the same reason, $k \leq N-1.$
  
  Next we split the representation of $y$ into two parts: 
\[
y =  \{ \rho^m + \sum_{i=1}^{k-1} d_i \rho^{m-i}\} +
   \{ \sum_{i=k}^{m} d_i \rho^{m-i}\}.
\]
 Considering the second term we know $d_k = 0$ by the definition of $k$. Hence 
   $ \sum_{i=k}^{m} d_i \rho^{m-i} =  \sum_{i=k+r}^{m} d_i \rho^{m-i} $, where $r \geq 1$ such that
    $d_k = \dots d_{k+r-1} = 0$ and $d_{k+r} = 1.$ Then 
\[
\begin{array}{rcl}
      \sum_{i=k}^{m} d_i \rho^{m-i} &=&  \sum_{i=k+r}^{m} d_i \rho^{m-i} =
      \rho^{m-k-r} + d_{k+r+1} \rho^{m-k-r-1} + \dots + d_m  \\
&=& 
      \rho^{m-k-r} + \sum_{j=1}^{m-k-r} d_{k+r + j} \rho^{m-k-r-j} < \rho^{m-k-r+1} \leq \rho^{m-k}.
\end{array}
\]
      
      Here we have applied  the induction hypothesis in the next to last step  of this argument 
      for $M = m-k-r$  and have used a trivial estimate in the last step.
      
      Returning to the expression for $y$ as a sum of two terms and using the estimate for the second term
      proven just above we infer $y < \{ \rho^m + \sum_{i=1}^{k-1} d_i \rho^{m-i}\} + \rho^{m-k} \leq  
       \sum_{j = 0}^{k}  \rho^{m-j} \leq  \sum_{j = 0}^{N-1}  \rho^{m-j} = \rho^{m+1},$ proving the assertion.
\end{proof}

\subsection{Orderings of $\rho-$integers}

In this subsection we will describe what the ordering of $\rho-$integers in $\R_{\geq 0}$ means 
in terms of their coefficients.

\begin{theorem} \label{ordering}
Let $\rho_N = \rho$ be a distinguished silver number. Let $x = \rho^n + \sum_{i=1}^n c_i \rho^{n-i}$ and $y = \rho^m + \sum_{i=1}^m d_i \rho^{m-i}$  
be $\rho-$integers in normal form. Then
\begin{itemize}
\item $x > y$, if $n > m.$
\item $x > y$, if $n=m$, and for $k = \min \{i; c_i \neq d_i \}$ we have  $c_k = 1$ and $d_k = 0.$
\end{itemize}
\end{theorem}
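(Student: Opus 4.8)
The plan is to reduce both assertions to the crucial inequality of Theorem \ref{crucialinequ}, which already says that a single power $\rho^{n}$ strictly dominates any normal-form $\rho$-integer of strictly smaller degree. The first bullet is then immediate: since $n>m$ and $y$ is in normal form, Theorem \ref{crucialinequ} gives $\rho^{n}>y$, while $x=\rho^{n}+\sum_{i=1}^{n}c_i\rho^{n-i}\ge \rho^{n}$ because every $c_i\ge 0$. Combining these yields $x\ge \rho^{n}>y$, as claimed.

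For the second bullet I would set $c_0=d_0=1$, so that the hypothesis $c_0=d_0$ forces $k\ge 1$ and $c_i=d_i$ for all $i<k$. These equal leading terms cancel in the difference, and using $c_k-d_k=1$ one obtains
\[
x-y \;=\; \sum_{i=k}^{n}(c_i-d_i)\rho^{\,n-i}
\;=\; \rho^{\,n-k} + \sum_{i=k+1}^{n} c_i\rho^{\,n-i} - \sum_{i=k+1}^{n} d_i\rho^{\,n-i}.
\]
Since the middle sum is nonnegative, it suffices to prove $\rho^{\,n-k}>z$, where $z:=\sum_{i=k+1}^{n} d_i\rho^{\,n-i}$ is the tail of $y$. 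If $z=0$ this is trivial; otherwise I extract its leading nonzero coefficient $d_{k+1+s}=1$, giving a representation $z=\rho^{M}+\cdots$ of degree $M=n-k-1-s<n-k$.

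The one non-routine point, and the place where the normal-form hypothesis on $y$ is genuinely used, is the claim that this representation of $z$ is again in normal form. I would verify this by observing that every window of $N$ consecutive coefficients of $z$ is a window of the coefficient sequence $d_0,d_1,\dots$ of $y$ (with the same trailing-zero convention), and hence contains a zero because $y$ is in normal form. Once $z$ is known to be a normal-form $\rho$-integer of degree $M<n-k$, Theorem \ref{crucialinequ} (applied with $n-k$ in place of $n$ and $M$ in place of $m$) gives $\rho^{\,n-k}>z$, whence $x-y>0$. I expect this inheritance of the normal-form property by the tail to be the main obstacle: without it the tail could in principle be a $\rho$-integer whose value exceeds $\rho^{\,n-k}$, and the naive lexicographic comparison would break down.
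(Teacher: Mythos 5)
Your proof is correct and follows essentially the same route as the paper: both bullets are reduced to Theorem \ref{crucialinequ} by cancelling the common leading part and comparing the degree of the tails. You are in fact slightly more careful than the paper's own proof, which asserts "now the first statement is applicable" without explicitly checking that the tail of a normal-form representation is again in normal form --- the window argument you give is exactly the justification needed there.
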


\begin{proof}The first statement is a direct consequence of Theorem \ref{crucialinequ}, in view of $x\geq \rho^n$.
The second statement follows from the first: Since $x>y$, not all coefficients $c_i$ of $x$ coincide 
with the corresponding coefficients $d_i$ of $y$. So write $ x = h_k + \tilde{x}$ and $ y = h_k + \tilde{y},$ where
$h_k =  \rho^n + \sum_{i=1}^{k-1} c_i \rho^{n-i} =  \rho^n + \sum_{i=1}^{k-1} d_i \rho^{n-i}$  with
$c_i = d_i$ for $i =1 , \dots, k-1,$ but $c_{k} \neq d_{k}$. Then $x > y$ is equivalent to $\tilde{x} > \tilde{y}$.
Moreover, degree $\tilde{x} > $ degree $\tilde{y}$. Now the first statement is applicable.
\end{proof}

\begin{corollary}\label{unique}
If $\rho_N = \rho$ is a distinguished silver number, then each $\rho-$integer has exactly one representation in normal form. 
\end{corollary}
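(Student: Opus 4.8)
The plan is to deduce uniqueness directly from the ordering result, Theorem \ref{ordering}, since the existence of a normal form has already been established in the preceding theorem. The essential observation I want to exploit is that Theorem \ref{ordering} produces \emph{strict} inequalities between distinct normal form representations; consequently two normal forms that represent the same real number cannot differ in any respect.

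Concretely, I would argue by contradiction. Suppose some $\rho$-integer $x$ admits two representations in normal form, say $x = \rho^n + \sum_{i=1}^n c_i \rho^{n-i}$ and $x = \rho^m + \sum_{i=1}^m d_i \rho^{m-i}$, both with leading coefficient $1$ as demanded by the definition of normal form. The first step is to rule out $n \neq m$: if, for instance, $n > m$, then the first bullet of Theorem \ref{ordering} yields that the first representation is strictly larger than the second, contradicting the fact that both equal $x$. Hence $n = m$.

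With the degrees equal, I would then suppose the coefficient strings $(c_i)$ and $(d_i)$ are not identical and set $k = \min\{i : c_i \neq d_i\}$. Since each coefficient lies in $\{0,1\}$, at the index $k$ one of the two values is $1$ and the other is $0$; relabelling the two representations if necessary, I may assume $c_k = 1$ and $d_k = 0$. Then the second bullet of Theorem \ref{ordering} gives a strict inequality between the two representations, once more contradicting that both equal $x$. Therefore $(c_i) = (d_i)$, and the two representations coincide, which is the desired uniqueness.

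I expect essentially no obstacle beyond invoking the previous results correctly: all of the substantive work — in particular the fact that a higher degree, or a leading discrepancy favouring a $1$ over a $0$, forces a genuinely larger value — is already encapsulated in Theorem \ref{crucialinequ} and Theorem \ref{ordering}. The only point that warrants a moment's care is verifying that both candidate normal forms carry leading coefficient $1$, so that Theorem \ref{ordering} applies to them verbatim.
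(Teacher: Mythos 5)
Your argument is correct and is exactly the paper's proof, merely written out in full: the paper's one-line justification ``different normal form representations imply inequality by Theorem \ref{ordering}'' is precisely your case split on $n\neq m$ versus $n=m$ with a first differing coefficient. No further comment is needed.
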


\begin{proof} By Theorem \ref{ordering}, different normal form representations imply inequality.
\end{proof}
\begin{proposition}\label{largest}
Let $\rho_N = \rho$ be a distinguished silver number. Let $n$ be a non-negative integer and $x$ the largest $\rho$-integer in normal form that admits a representation of degree $n$. Then, with\footnote{Here for a real number $x$ the expression $\lfloor x\rfloor$ denotes the Gauss bracket of $x$, i.e. the greatest integer not exceeding $x$.} 
\[
k^*:=\left\lfloor\dfrac{n+1}{N}\right\rfloor,
\]
one has
\begin{equation}\label{largestrep}
x=\sum_{i=1}^{k^*}\rho^{n+1-iN}\left(\rho^{N-1}+\cdots +\rho\right)+R,
\end{equation}
with remainder
\begin{equation}\label{remainder}
R=\sum_{\ell=0}^{n-k^*N}\rho^\ell.
\end{equation}
Explicitly
\begin{equation}\label{remexplicit}
R=\sum_{\ell=0}^{r-1}\rho^\ell,
\end{equation}
with $r$ the remainder upon dividing $n+1$ by $N$ in integers. 
\end{proposition}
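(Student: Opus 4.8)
The plan is to convert the optimization into a lexicographic problem over binary coefficient strings, settle that problem by a short exchange argument, and then re-index the winning string as a sum of powers of $\rho$.

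First I would use the ordering theorem. By Corollary~\ref{unique} the normal form is unique, and by Theorem~\ref{ordering} two normal forms of the same degree $n$ are ordered lexicographically by their coefficient tuples $(c_0,c_1,\dots,c_n)$ with $c_0=1$: the larger $\rho$-integer is the one carrying a $1$ at the first position where the tuples differ. Hence finding the largest degree-$n$ normal form is exactly finding the lexicographically largest admissible string in $\{0,1\}^{n+1}$ with $c_0=1$, where \emph{admissible} means (by the normal-form condition, noting that $c_k=0$ for $k>n$ renders every window extending past $n$ harmless) that no $N$ consecutive entries are all equal to $1$.

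Next I would exhibit the maximizer and verify it. The claim is that the lexicographically largest admissible string is given by $c_i=0$ exactly when $i\equiv -1 \pmod N$ (for $0\le i\le n$) and $c_i=1$ otherwise. Admissibility is immediate: the zeros sit at the positions $N-1,2N-1,\dots$, so every maximal run of $1$'s has length at most $N-1$. For lexicographic maximality I would argue by contradiction: if an admissible $d$ were strictly larger, then at the first index $i_0$ of disagreement one has $d_{i_0}=1$ while $c_{i_0}=0$, which forces $i_0\equiv-1\pmod N$ and in particular $i_0\ge N-1$; but then the indices $i_0-N+1,\dots,i_0-1$ are none of them $\equiv-1\pmod N$, so $c$ (and hence $d$, which agrees with $c$ below $i_0$) equals $1$ there, and together with $d_{i_0}=1$ this produces a forbidden run of $N$ ones in $d$, a contradiction. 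This exchange step is the one genuinely delicate point; the rest is bookkeeping.

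Finally I would count and translate. The zeros occur precisely at $i=jN-1$ for $1\le j\le k^*$, where $k^*$ is the largest such $j$ with $jN-1\le n$, i.e.\ $k^*=\lfloor (n+1)/N\rfloor$, matching the stated value. The $1$'s then fall into $k^*$ full blocks together with a trailing block: the $j$-th full block occupies the positions $(j-1)N,\dots,jN-2$ and contributes $\sum_{s=(j-1)N}^{jN-2}\rho^{\,n-s}$, which rearranges to $\rho^{\,n+1-jN}\bigl(\rho^{N-1}+\cdots+\rho\bigr)$; summing over $j=1,\dots,k^*$ yields the main term of \eqref{largestrep}. The positions after the last zero, namely $k^*N,\dots,n$, are all $1$ and contribute $R=\sum_{\ell=0}^{\,n-k^*N}\rho^\ell$, which is \eqref{remainder}; writing $n+1=k^*N+r$ with $0\le r<N$ gives $n-k^*N=r-1$ and hence the explicit form \eqref{remexplicit}. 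The boundary cases are read off directly: when $k^*=0$ (that is, $n\le N-2$) there are no zeros and $x=\sum_{\ell=0}^{n}\rho^\ell=R$, while when $r=0$ the last coefficient is $0$, $R$ is the empty sum, and the formula stands with the convention that empty sums vanish.
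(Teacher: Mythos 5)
Your proof is correct, and it takes a genuinely different route from the paper's. The paper argues by induction on $n$: it peels off the leading block $\rho^n+\sum_{i=1}^{N-1}c_i\rho^{n-i}$, uses Theorem~\ref{ordering} to force that block to be $\rho^n+\cdots+\rho^{n-N+2}$, then needs an extra contradiction step to show that the tail $\widetilde x$ is itself maximal among integers of its degree before the induction hypothesis can be invoked; the bookkeeping for $k^*$ and $R$ is done at the end exactly as you do it. You instead identify the entire maximizing coefficient string at once ($c_i=0$ iff $i\equiv -1\pmod N$) and certify it by a single exchange argument: a lexicographically larger admissible string would have to place a $1$ at a position $\equiv -1\pmod N$ while agreeing with the all-ones pattern on the preceding $N-1$ positions, creating a forbidden run. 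Both proofs ultimately rest on Theorem~\ref{ordering} (same-degree normal forms compare lexicographically), and your reduction to that statement, including the remark that windows extending past index $n$ are harmless, is accurate. What your version buys is the elimination of the induction and of the sub-maximality claim for the tail, at the cost of having to guess the answer in advance; the paper's version derives the answer block by block and fits the inductive style of the surrounding results. Your handling of the boundary cases $k^*=0$ and $r=0$, and the index translation to \eqref{largestrep}--\eqref{remexplicit}, all check out.
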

\begin{proof}
For $n<N-1$ the largest $\rho$-integer (in normal form) of degree $n$ is just $\sum_{\ell=0}^n\rho^\ell$.
Note that this coincides with (\ref{largestrep}), since $k^* = 0$. \\
For $n=N-1$ every $\rho$-integer in normal form of degree $N-1$ has the form $\sum_{i=0}^{N-1} d_i\rho^{N-1-i}$, with at least one $d_i=0$. Clearly the largest of these is $\sum_{i=0}^{N-2}\rho^{N-1-i}$, as asserted 
(note $k^*=1$, hence the remainder sum vanishes).\\
Now proceed by induction. We have $n\geq N$, hence
\[
x=\rho^n+\sum_{i=1}^nc_i\rho^{n-i}=\rho^n+\sum_{i=1}^{N-1}c_i\rho^{n-i}+\widetilde x,
\]
where
\[
\widetilde x:= \sum_{i=N}^nc_i\rho^{n-i}.
\]
Since $x$ is in normal form and $c_0 = 1$, we have  $c_1\cdots c_{N-1}=0$.
Since  $\rho^n+\sum_{i=1}^{N-1}c_i\rho^{n-i} $ and $\rho^n+\sum_{i=1}^{N-2}\rho^{n-i}$ 
 are in normal form, we can apply  Theorem \ref{ordering}  and obtain for all possible choices of 
 the $c_j$ the inequality
\[
\rho^n+\sum_{i=1}^{N-1}c_i\rho^{n-i}\leq \rho^n+\sum_{i=1}^{N-2}\rho^{n-i}.
\]
As a consequence,  equality holds by the maximality of $x$. Thus
\[
x=\rho^{n+1-N}(\rho^{N-1}+\cdots +\rho) +\widetilde x.
\]
Since $x$ is in normal form, so is $\widetilde x$. To ensure applicability of  the induction hypothesis to $\widetilde x$, we show that $\widetilde x$ is maximal among all $\rho$-integers that have the same degree as $\widetilde x$, which we denote by $n-N-k$ with some $k\geq 0$. Assume that there exists some $\rho$-integer $\widetilde y$, of the same degree, satisfying $\widetilde x<\widetilde y$. Then
\[
y:=\rho^{n+1-N}\left(\rho^{N-1}+\cdots+\rho\right)+\widetilde y=\rho^{n+1-N}\left(\rho^{N-1}+\cdots+\rho\right)+0\cdot\rho^{n+1-N}+\widetilde y
\]
is in normal form and satisfies $x<y$; a contradiction to the maximality of $x$. Now the induction hypothesis shows the assertion.
\\\ To make sure that here the remainder $R$ has the asserted range of summation
we take a closer look at $k^*$ and $R$: Division with remainder in integers yields
\[
n+1=d\,N+r,\quad 0\leq r\leq N-1.
\]
From this,
\[
\dfrac{n+1}{N}=d+\dfrac{r}{N};\quad \left\lfloor\dfrac{n+1}{N}\right\rfloor=d; \quad N\cdot \left\lfloor\dfrac{n+1}{N}\right\rfloor=dN=n+1-r;
\]
and finally 
\[
n-k^*N=r-1.
\]
The claim follows.
\end{proof}

The following is the principal result about $\rho$-integers.
\begin{theorem}\label{rhotiles}
Let $\rho_N = \rho$ be a distinguished silver number. Let $x,\,y$ be {\em consecutive} $\rho$-integers in normal form, $x>y$.
\begin{itemize}
\item  If $y=0$, then $x = 1.$\\
\item  If $ x=z+\widetilde x,\quad y=z$, where $z$ is a $\rho-$integer, then $\widetilde x = 1.$\\
\item  If $ x=z+\widetilde x,\quad y=z + \widetilde y$, where $z$ is a $\rho-$integer and $\widetilde{y} \neq 0$, then
$\widetilde x$ is of higher degree than $ \widetilde y$.\\
\end{itemize}
In case $\widetilde y \neq 0$ one has
\begin{equation}
x-y=\sum_{i=r+1}^N\rho^{r-i}=\dfrac{1}{\rho}+\cdots+\dfrac{1}{\rho^{N-r}},
\end{equation}
with $r$ the remainder upon division of $n+1$ by $N$.
\end{theorem}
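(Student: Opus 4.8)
\section*{Proof proposal}

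The plan is to align the normal forms of $x$ and $y$ by absolute exponent and to let $z$ collect their (uniquely determined) common leading terms, so that $x=z+\widetilde x$ and $y=z+\widetilde y$ with $\widetilde x,\widetilde y$ already disagreeing in their top surviving coefficient (here $z=0$ precisely when $\deg x\neq\deg y$). With this convention the three itemized assertions fall out quickly. If $y=0$ one only needs that $1$ is the smallest positive $\rho$-integer, which is immediate because a nonzero normal form is at least its leading power $\rho^{k}$ with $k\ge 0$, hence at least $1$. The ``higher degree'' statement of the third item is built into the choice of $z$: letting $e$ be the highest exponent at which $x$ and $y$ disagree, one has coefficient $1$ there in $x$ and $0$ in $y$ (since $x>y$, by Theorem \ref{ordering}), whence $\deg\widetilde x=e>\deg\widetilde y$. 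For the second item, where $\widetilde y=0$ but $z\neq0$, I would argue by contradiction: since $z$ then has vanishing constant term, $z+1$ is a genuine $\rho$-integer, and if $\widetilde x>1$ its value would lie strictly between $y=z$ and $x=z+\widetilde x$, contradicting consecutiveness; hence $\widetilde x=1$.

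The heart of the theorem is the gap formula for $\widetilde y\neq 0$, where the decisive reduction is $x-y=\widetilde x-\widetilde y$. Writing $n:=\deg\widetilde y$ (this is the $n$ appearing in the statement), I first claim that consecutiveness forces $\widetilde y$ to be the \emph{largest} normal-form $\rho$-integer of degree $n$ and forces $\widetilde x=\rho^{n+1}$. Both claims are proved by manufacturing an intermediate value. If some normal form $\widetilde y'$ of degree $n$ satisfied $\widetilde y<\widetilde y'$, then $\widetilde y'<\rho^{n+1}\le\widetilde x$ by Theorem \ref{crucialinequ} (using $\deg\widetilde x>n$), and $z+\widetilde y'$ would be a $\rho$-integer lying strictly between $y$ and $x$; similarly, if $\widetilde x>\rho^{n+1}$, then $z+\rho^{n+1}$ would lie strictly between. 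What makes these intermediate numbers legitimate $\rho$-integers is that $z$ is supported on exponents $>e\ge n+1$, so its support is disjoint from that of $\widetilde y'$ (which has degree $n$) and of $\rho^{n+1}$, and all coefficients remain in $\{0,1\}$. This pins down $\widetilde y$ and $\widetilde x$ exactly and in particular gives $\deg\widetilde x=n+1$.

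It then remains to evaluate $x-y=\rho^{n+1}-\widetilde y$, with $\widetilde y$ the largest degree-$n$ normal form described by Proposition \ref{largest}. Rather than subtract the closed form, I would exploit the block structure: the largest degree-$n$ element consists of its top $N-1$ ones $\rho^{n}+\cdots+\rho^{n-N+2}$, a forced zero at exponent $n-N+1$, and then the largest element of degree $n-N$; expanding $\rho^{n+1}=\rho^{n}+\cdots+\rho^{n-N+1}$ by \eqref{gen-dist-silverpoly}, the top $N-1$ terms cancel and one is left with $\rho^{(n-N)+1}$ minus the largest degree-$(n-N)$ element. Hence $g(n):=\rho^{n+1}-\widetilde y$ satisfies $g(n)=g(n-N)$, so it depends only on $r=(n+1)\bmod N$. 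The base cases $0\le n\le N-1$ are handled by Lemma \ref{variations} for $n\le N-2$ and by the defining equation \eqref{dist-silverpoly} for $n=N-1$ (where $g=1$), and in each case one gets $g(n)=1/\rho+\cdots+1/\rho^{N-r}=\sum_{i=r+1}^{N}\rho^{r-i}$, as asserted.

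The main obstacle is the pair of consecutiveness arguments in the second paragraph. The delicate points are to verify that the manufactured numbers $z+\widetilde y'$ and $z+\rho^{n+1}$ are bona fide $\rho$-integers --- which is exactly where the disjointness of exponent supports (guaranteed by $\deg\widetilde x>n$) is needed --- and that their values really do fall strictly between $y$ and $x$, for which Theorem \ref{crucialinequ} and the ordering of Theorem \ref{ordering} are the essential inputs. Once $\widetilde y$ and $\widetilde x$ are identified, the closing computation is a routine telescoping.
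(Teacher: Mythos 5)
Your proposal is correct and follows essentially the same route as the paper: identify $\widetilde x=\rho^{n+1}$ and $\widetilde y$ as the largest degree-$n$ normal form via consecutiveness, then evaluate $\rho^{n+1}-\widetilde y$ by descent in steps of $N$ using Proposition \ref{largest} and Lemma \ref{variations}. Your explicit verification that the manufactured intermediate values $z+\widetilde y'$ and $z+\rho^{n+1}$ are genuine $\rho$-integers (via disjointness of exponent supports) is a detail the paper leaves implicit, but it is not a different method.
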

\begin{proof} The case $\widetilde y=0$ is obvious, since $1$ is the smallest nonzero $\rho$-integer. 
\newline
For the rest of the proof we assume that $\widetilde x$ has degree $m$ and $\widetilde y$ has degree $n$ with $m>n.$
We thus have  $\widetilde x=\rho^{m}+\cdots.$ Now first, since $\widetilde x$ is the smallest $\rho$-integer greater than $\widetilde y$ we have $m=n+1$ and $\widetilde x=\rho^{n+1}$ by Theorem \ref{ordering}. Second, $\widetilde y$ is then the largest $\rho$-integer of degree $n$, which was determined in Proposition \ref{largest}. \\
For $n<N-1$ we get that $\widetilde y$ is the remainder, and 
\[
x-y=\widetilde x-\widetilde y=\rho^{r}-  \sum_{\ell=0}^{r-1} \rho^\ell=\dfrac{1}{\rho}+\cdots+\dfrac{1}{\rho^{N-r}}
\]
by Lemma \ref{variations}.\\
Now assume $n\geq N-1$. Then
\[
\begin{array}{rcccl}
x-y&=&\rho^{n+1}-\left(\rho^n+\cdots+\rho^{n-N+2}\right) &+& y^*\\
 &=& \rho^{n-N+1} &+& y^*,
\end{array}
\]
where $y^*$ has degree $n-N$ (see Theorem \ref{ordering}), and an obvious variant of Lemma \ref{variations} implies the second equality.\\
Now one can proceed by descent and reduce this to the case $n<N-1$, to finish the proof.
\end{proof}

The following statement is now obvious.
\begin{corollary}\label{distnoclus} Let $\rho_N = \rho$ be a distinguished silver number. 
Then 0 is not a cluster point of the set of differences of pairs of $\rho-$integers.
\end{corollary}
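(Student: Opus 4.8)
The plan is to simply harvest the explicit gap formulas from Theorem \ref{rhotiles}. That theorem shows that the difference between two \emph{consecutive} $\rho$-integers (in normal form) is one of only finitely many numbers: it is either $1$ (the cases $\widetilde y=0$), or else $\sum_{i=r+1}^N\rho^{r-i}=\tfrac1\rho+\cdots+\tfrac{1}{\rho^{N-r}}$ for some remainder $r\in\{0,1,\dots,N-1\}$. Each such sum has at least one summand, so it is at least $\tfrac1\rho$, and the value $1$ exceeds $\tfrac1\rho$ because $\rho>1$. Hence every gap between consecutive $\rho$-integers is bounded below by the single positive constant $\delta:=\tfrac1\rho$, which by Theorem \ref{distpol}(1) (giving $\rho<2$) even satisfies $\delta>\tfrac12$. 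Identifying this uniform lower bound on consecutive gaps is the whole conceptual content of the argument.

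Next I would upgrade the bound on \emph{consecutive} gaps to a bound on \emph{all} positive differences. Let $a>b$ be any two distinct $\rho$-integers. By the ordering of Theorem \ref{ordering} together with the uniqueness of normal forms (Corollary \ref{unique}), the $\rho$-integers are totally ordered and discrete, so there is a finite chain of consecutive $\rho$-integers $b=z_0<z_1<\cdots<z_k=a$ with $k\geq 1$. Telescoping gives $a-b=\sum_{j=1}^k(z_j-z_{j-1})\geq k\,\delta\geq\delta>0$; equivalently, $a$ is at least the immediate successor of $b$, and that first gap alone is already $\geq\delta$. Since the set of differences of pairs is symmetric about the origin, every nonzero difference has absolute value $\geq\delta$. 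Therefore the open interval $(-\delta,\delta)$ meets the set of differences only in $\{0\}$, which is exactly the statement that $0$ is isolated in that set and hence not a cluster point.

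The only point requiring a word of justification, and thus the nearest thing to an obstacle, is that the notion of ``consecutive'' $\rho$-integers and the telescoping into finitely many consecutive gaps are legitimate; but this is already built into the framework of Theorem \ref{rhotiles}, which presupposes and uses the successor structure, and it follows formally from discreteness (a minimum gap $\geq\delta$ forces at most finitely many $\rho$-integers in any bounded interval). Consequently the corollary is genuinely immediate: once the finitely many admissible gap values are seen to be bounded away from $0$, no further estimate is needed.
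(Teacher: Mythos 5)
Your proof is correct and follows exactly the route the paper intends: the paper states the corollary as an immediate consequence of Theorem \ref{rhotiles}, whose explicit gap formulas show that consecutive differences take only finitely many values, each at least $1/\rho$, and the telescoping step you add to cover non-consecutive pairs is the natural (and valid) way to make the ``obvious'' claim fully rigorous.
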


\subsection{Natural tilings from silver integers}
We note another immediate consequence of Theorem \ref{rhotiles}. The second statement in the proposition below holds because multiplication by 
a silver number $\sigma$  sends $\sigma$-integers to $\sigma$-integers
(see Remark \ref{multsigma}).

\begin{proposition}\label{inttile}
Let $\rho_N = \rho$ be a distinguished silver number. Then the $\rho$-integers are the endpoints of a tiling of the half-line $[0,\,\infty)$ by prototiles of lengths
\[
1,\quad \sum _{i=1}^{N-k}\rho^{-i},\quad 1\leq k\leq N-1.
\]
Moreover, this tiling is inflationary with multiplier $\rho$.
\end{proposition}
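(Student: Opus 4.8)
The plan is to read off both assertions from the structural results already established, principally Theorem~\ref{rhotiles} together with Corollary~\ref{distnoclus} and Remark~\ref{multsigma}. First I would fix the increasing enumeration $0 = x_0 < x_1 < x_2 < \cdots$ of the set $S$ of all $\rho$-integers, representing each $x_j$ by its unique normal form (Corollary~\ref{unique}) so that Theorem~\ref{rhotiles} becomes applicable to consecutive pairs. That $S$ is unbounded is immediate, since the powers $\rho^n$ are $\rho$-integers and tend to infinity; that consecutive elements are separated by a uniformly positive gap follows from Corollary~\ref{distnoclus} (equivalently, from the finiteness of the gap list computed next). As all gaps are also bounded above, the tiles cover the whole half-line, so $S$ genuinely gives the endpoints of a tiling of $[0,\infty)$, the tiles being the intervals $[x_j, x_{j+1}]$.

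Next I would determine the tile lengths $x_{j+1} - x_j$ from Theorem~\ref{rhotiles}. By its first two bullets the gap equals $1$ whenever $y = 0$ or $\widetilde y = 0$; in the remaining case it equals $\sum_{i=r+1}^{N}\rho^{r-i} = \rho^{-1} + \cdots + \rho^{-(N-r)}$, with $r \in \{0,1,\dots,N-1\}$ the remainder of $n+1$ modulo $N$. Here I would note that $r = 0$ gives $\sum_{i=1}^{N}\rho^{-i} = 1$, which is just the defining relation \eqref{dist-silverpoly} divided by $\rho^N$, so it coincides with the length $1$ already found; and $r = 1, \dots, N-1$ give exactly the lengths $\sum_{i=1}^{N-k}\rho^{-i}$ with $k = r$. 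Thus the gap sequence takes only the finitely many values listed in the statement, and the tiling has the asserted finite set of prototiles.

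Finally, for the inflation claim I would invoke Remark~\ref{multsigma}(1): if $x$ is a $\rho$-integer then so is $\rho x$, i.e. $\rho S \subseteq S$. Consequently the dilation $t \mapsto \rho t$ carries each tile $[x_j, x_{j+1}]$ to $[\rho x_j, \rho x_{j+1}]$, whose endpoints again lie in $S$, and the points of $S$ strictly between them subdivide this inflated interval into a finite string of original tiles. Since there are only finitely many prototiles, this produces a substitution system with finitely many rules, and the tiling is inflationary with multiplier $\rho$.

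The bulk of the argument is therefore already contained in Theorem~\ref{rhotiles} and Corollary~\ref{distnoclus}; the only routine points are the bookkeeping that identifies the $r=0$ gap with the length $1$ via \eqref{dist-silverpoly}, and the verification that $\rho S \subseteq S$ together with the finiteness of the prototile list meets the definition of an inflationary tiling from the Appendix. The mildly delicate part, and the step I would write out most carefully, is the latter: one must check that the way an inflated prototile decomposes into original tiles depends only on the prototile type and not on its location along the half-line, so that the substitution is well defined. I expect this to follow from the same normal-form analysis that produced the gap list.
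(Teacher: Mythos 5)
Your proposal is correct and follows essentially the same route as the paper, which likewise derives the gap list from Theorem~\ref{rhotiles} (identifying the $r=0$ gap with length $1$) and deduces the inflation property from Remark~\ref{multsigma}. The one point you rightly flag as delicate --- that the decomposition of an inflated tile depends only on the prototile type --- is not addressed in the paper's (very terse) justification of this proposition either, but is worked out explicitly later in Theorem~\ref{silvintprop} via exactly the normal-form analysis you anticipate.
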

In other notation, the tiling presented in the proposition has 
prototiles of lengths, in descending order, 
\begin{equation}
1, \quad \dfrac{1}{\rho}+\cdots+\dfrac{1}{\rho^{N-1}}, \quad \dfrac{1}{\rho}+\cdots+\dfrac{1}{\rho^{N-2}}, \quad \ldots \quad ,\dfrac{1}{\rho}. 
\end{equation}

\section{Inflationary tilings associated to distinguished silver numbers.}\label{sec:inftile}

We want to compare
the tilings introduced in Proposition \ref{inttile} to tilings obtained by the construction via inflation and substitution.
For general facts about tilings, and about the inflation-substitution construction, we refer to Section \ref{appsection}.

\subsection{About primitivity of the partition matrix of a silver polynomial} \label{partitionmatrixofsilver}

We consider an arbitrary  
Resnikoff silver number $\sigma$ of degree $N$, 
\[
\sigma^N=\sum_{j=1}^N b_j\sigma^{N-j}.
\]

By definition, the corresponding Resnikoff silver polynomial $P_\sigma$ of degree $N$ satisfies $b_N = 1$, 
but also $b_s = 1$ for some integer $ 1 \leq s < N$.

Next we consider the non-negative companion matrix of  $P_\sigma,$  as spelled out explicitly in the form 
\begin{equation} \label{Uforsilver}
U=U_\sigma=\begin{pmatrix} b_1&b_2&\cdots&\cdots&b_N\\ 
            1 & 0& & &0\\
        0&\ddots&  & & 0\\
        \vdots & & & &\vdots\\
    0&\cdots & & 1&0
\end{pmatrix}.
\end{equation}
We follow Resnikoff \cite{Res}, equation (15), with this choice.
(Resnikoff calls this the Frobenius companion matrix.)
For a discussion of other natural choices see subsection \ref{subsec:compmat}.

Using the results about primitive matrices presented in  Theorem \ref{primmat} we can prove the following characterization of primitive as well as non-primitive companion matrices of silver polynomials.

\begin{theorem} \label{classifyprimitive}
\begin{enumerate}
    \item If $U_\sigma$ is the companion matrix of a Resnikoff silver polynomial, then  $U_\sigma$ is primitive if and only if
the labels $s_1, s_2, \dots N$ of all  non-vanishing $b_k$ that occur in the defining polynomial of  $\sigma$ have no non-trivial common divisor.
\item Conversely, $U_\sigma$ is not primitive if and only if there exists an integer $d>1$ and a silver polynomial $Q$ with primitive companion matrix, such that $P_\sigma(X)= Q(X^d)$. In particular, in addition to the silver number $\sigma = \sigma_P$, also $\sigma^d = \sigma_Q$ is a silver number, and $P_\sigma$ has more than one root of modulus $>1$.

\end{enumerate}
\end{theorem}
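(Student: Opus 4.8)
The plan is to translate primitivity of $U_\sigma$ into the cycle structure of its associated directed graph $\Gamma$, whose period then reads off directly. Recall $U_\sigma$ has first row $(b_1,\dots,b_N)$ and ones on the subdiagonal; interpreting $(U_\sigma)_{ij}>0$ as an edge $i\to j$, vertex $1$ emits an edge to each $j$ with $b_j=1$, while every vertex $i\ge 2$ emits the single edge $i\to i-1$. First I would note that, since $b_N=1$, there is an edge $1\to N$, and the subdiagonal gives the descending path $N\to N-1\to\cdots\to 1$; hence every vertex is reachable from $1$ and reaches $1$, so $\Gamma$ is strongly connected and $U_\sigma$ is irreducible, whatever the remaining $b_j$ are.

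Next I would compute the period. Since the only edges raising the vertex index emanate from vertex $1$, every closed walk in $\Gamma$ passes through $1$ and decomposes into elementary cycles $1\to j\to j-1\to\cdots\to 1$ of length $j$, one for each $j$ with $b_j=1$. Hence the set of all cycle lengths is exactly the set of nonnegative integer combinations of $\{\,j:\ b_j=1\,\}$, whose gcd is $\gcd\{\,j:\ b_j=1\,\}$. Invoking the criterion recalled in Theorem \ref{primmat} --- that an irreducible nonnegative matrix is primitive precisely when its period, the gcd of its cycle lengths, equals $1$ --- yields statement (1): $U_\sigma$ is primitive iff $\gcd\{\,j:\ b_j=1\,\}=1$, i.e. iff the labels $s_1,s_2,\dots,N$ of the nonvanishing $b_j$ have no common divisor $>1$.

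For (2) I would put $d:=\gcd\{\,j:\ b_j=1\,\}$, so that non-primitivity is equivalent to $d>1$. If $d>1$, every index with $b_j=1$ is a multiple of $d$; writing $N=dM$ and $j=dj'$ and substituting $Y=X^d$ gives
\[
P_\sigma(X)=X^{dM}-\sum_{j:\,b_j=1}X^{d(M-j')}=Q(X^d),\qquad Q(Y)=Y^{M}-\sum_{j:\,b_j=1}Y^{M-j'}.
\]
I would then verify that $Q$ is itself a silver polynomial --- its coefficients lie in $\{0,1\}$, the term $j=N$ forces constant coefficient $1$, and it inherits at least two nonvanishing coefficients from $P_\sigma$ --- and that the labels of its nonvanishing coefficients, the numbers $j/d$, have gcd $1$, so by (1) its companion matrix is primitive. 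Conversely, if $P_\sigma(X)=Q(X^d)$ with $d>1$, every exponent occurring in $P_\sigma$ differs from $N$ by a multiple of $d$, so $d\mid\gcd\{\,j:\ b_j=1\,\}$ and $U_\sigma$ is not primitive by (1).

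Finally I would settle the ``in particular'' claims. From $P_\sigma(\sigma)=Q(\sigma^d)=0$ we see $\sigma^d$ is a positive root of $Q$; since $t\mapsto t^d$ is an increasing bijection of $(0,\infty)$ carrying the positive roots of $P_\sigma$ onto those of $Q$, the largest positive root $\sigma$ of $P_\sigma$ is sent to the largest positive root of $Q$, whence $\sigma^d=\sigma_Q$. Moreover the $d\ge 2$ numbers $\sigma_Q^{1/d}\,e^{2\pi ik/d}$, $k=0,\dots,d-1$, are all roots of $P_\sigma$ of common modulus $\sigma_Q^{1/d}>1$, so $P_\sigma$ has more than one root of modulus $>1$. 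I expect the only real obstacle to be the bookkeeping in the period computation --- cleanly arguing that every closed walk passes through vertex $1$ and that the gcd of all cycle lengths collapses to $\gcd\{\,j:\ b_j=1\,\}$ --- after which both parts follow by the substitution $Y=X^d$ and an order-preserving comparison of roots.
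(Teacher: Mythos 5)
Your proof is correct, and for part (1) it takes a genuinely different route from the paper. The paper establishes the primitivity criterion analytically: it invokes the Chen--Louck formula for the entries of $U_\sigma^n$ and reduces primitivity to the solvability, for all positions $(i,j)$ and some large $n$, of $s_1k_{s_1}+\dots+s_rk_{s_r}=n-i+j$ in non-negative integers, which it then settles by a B\'ezout-type argument in the coprime case and a divisibility obstruction otherwise. You instead read off the period from the digraph of $U_\sigma$: strong connectivity via the edge $1\to N$ and the descending subdiagonal path gives irreducibility for free, every closed walk must pass through vertex $1$ (since all edges from vertices $i\ge 2$ strictly decrease the index), the closed-walk lengths at vertex $1$ form the numerical semigroup generated by $\{j:\,b_j=1\}$, and item (2) of Theorem \ref{primmat} then gives primitivity iff $\gcd\{j:\,b_j=1\}=1$. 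Your argument is shorter, avoids the external combinatorial formula, and makes the role of the gcd transparent; the paper's computation has the side benefit of exhibiting explicit exponents $n$ for which $U_\sigma^n>0$. For part (2) both proofs perform the same substitution $Y=X^d$; you additionally spell out the ``in particular'' claims (the order-preserving bijection $t\mapsto t^d$ identifying $\sigma^d$ with $\sigma_Q$, and the $d$ roots $\sigma_Q^{1/d}e^{2\pi ik/d}$ of common modulus $\sigma>1$), which the paper merely asserts. The only cosmetic point is that the set of closed-walk lengths consists of \emph{positive} (not merely non-negative) integer combinations of the generators, but this does not affect the gcd.
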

\begin{proof}
Item 1: Since we know that $U_\sigma$ is a non-negative matrix, by the  defining property of a primitive matrix (see Theorem \ref{primmat}) we need to show that some positive power of $U_\sigma$  has only positive entries.
Let $\mathcal{N} = \{ 1 \leq s_1 < \dots s_r = N \}$ denote the set of labels, for which the corresponding coefficient in the representation of $\sigma$ does not vanish. Note that  $\mathcal{N} $ has at least two entries.

Now there is a formula for the entries of  $U_\sigma^n$  stated in Theorem 3.1 of \cite{ChenLouck}.
A specialization of this formula to our case leads to show that for a fixed, but sufficiently high power $n$ of $U_\sigma$, 
one needs to find for any matrix position $(i,j)$  in $U^n_\sigma$ non-negative integers  $k_s, s \in  \mathcal{N},$ 
such that 

\begin{equation} \label{solution}
s_1k_{s_1} + \dots  + s_rk_{s_r} = n - i + j
\end{equation}
holds.

Recall that we have $s_{r} = N$ and note that we only can use positive $k_t$ in positions where $b_t \neq 0$, since otherwise the last factor in loc.cit. annihilates the whole term.

Also observe, if we have any (non-trivial) solution to this equation, then Theorem  3.1 of \cite{ChenLouck} implies that the related coefficient in the $(i,j)-$position  of  $U^n_\sigma$ is positive.
So we finally need to determine, when equation $(\ref{solution})$ has a solution. 

Case 1: Assume that  $s_1, \dots , s_N = N$  do not have any common divisor.
Then there exist integers  $h_{s_1}, \dots , h_{s_r}$ such that 
$$s_1 h_{s_1} +  \dots  + s_r h_{s_r} = 1.$$

Now consider positive integer coefficients $A_l, l \in \mathcal{N},$ and put  
$$ n = A_1 \cdot s_1 + \dots + A_r\cdot s_r .$$

Since we also want to have a representation of all integers $ n \pm j, j = 1, \dots , N-1,$ in this form
we need to choose the $A_j$ large enough.

This can be obtained as follows:  We clearly  have
 $$ n  \pm{ j} =(A_1 \pm{j} h_{s_1})  \cdot s_1 + \dots + (A_r \pm{ j} h_{s_r})  \cdot s_r .$$
 
 Since we need all coefficients occurring to be positive integers for all values of $j = 1, \dots , N-1.$
 We choose the $A_{s_r}$  such that  $0 < A_{s_r} \pm{ j} h_{s_r} $ for all $s_r \in \mathcal{N}.$

Then the family of coefficients  $k_{s_l} = (n - i + j) h_{s_l}$ solves the required equation,
thus finishing case 1.

Case 2: Assume now the coefficients $s_1, \dots , s_r = N$  have a non-trivial common divisor $\delta$.
Then all $n-i+j$ need to be divisible by $\delta$. But this is not possible. The first claim is proven.

Item 2: To prove the second claim, note that the characterizing condition for non-primitivity may be restated as
\[
d:={\rm gcd}\,\left\{j; \,b_j\not=0\right\}>1.
\]
In particular $d$ divides $N$; we have $N=d\cdot N^*$, hence we can write 
\[
P_\sigma=X^N-\sum_{\ell=^1}^{N^*}b_{d\ell}X^{N-d\ell}={(X^d)}^{N^*}-\sum_{\ell=^1}^{N^*}b_{d\ell}{(X^d)}^{N^*-\ell}=:Q(X^d),
\]
and the companion matrix of $Q$ is primitive by the definition of $d$.
\end{proof}
\begin{corollary} \label{simple}
Let $\sigma $ be a silver number with silver polynomial $P_N(X) = X^N - b_1 X^{N-1} + \dots b_{N-1} X + b_N.$
Assume that $b_1 = 1,$ or that $N$ is prime, or that $b_{s_1 }=b_{s_2}=1$ for two relatively prime  coeffcients $s_j$ satisfying  $1 < s_j < N.$
Then $U_\sigma$ is primitive.
In particular, if $\sigma$ is a distinguished silver number, then $U_\sigma$ is primitive.
\end{corollary}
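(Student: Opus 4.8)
The plan is to deduce everything directly from Theorem~\ref{classifyprimitive}(1), which already translates primitivity of $U_\sigma$ into the purely arithmetic condition that the index set $\mathcal{N}=\{j:\,b_j\neq 0\}$ has trivial greatest common divisor. So I would set $d:=\gcd\mathcal{N}$ and check $d=1$ under each of the three hypotheses, keeping in mind two structural facts forced by the definition of a silver polynomial: $N\in\mathcal{N}$ (since $b_N=1$), and $\mathcal{N}$ contains at least one further index $s$ with $1\le s<N$ (since $\sum b_k>1$).

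First I would dispose of the two easy cases. If $b_1=1$, then $1\in\mathcal{N}$, and any gcd involving $1$ equals $1$, so $d=1$ at once. If instead $\mathcal{N}$ contains two coprime indices $s_1,s_2$ (the third hypothesis), then $d\mid\gcd(s_1,s_2)=1$, so again $d=1$; here the restriction $1<s_j<N$ is not logically needed for the deduction and serves only to keep this hypothesis genuinely distinct from the first (where $s_j=1$) and to make the statement nontrivial.

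The only case requiring a short argument is $N$ prime. Then $d\mid N$ forces $d\in\{1,N\}$. If $d=N$, every element of $\mathcal{N}$ would be a positive multiple of $N$; but the guaranteed extra index $s$ satisfies $0<s<N$, so it cannot be a multiple of $N$, a contradiction. Hence $d=1$. Finally, a distinguished silver number has $b_1=\dots=b_N=1$, so in particular $b_1=1$ and we land in the first case; therefore its companion matrix is primitive.

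I expect no genuine obstacle here, since the substantive content lives entirely in Theorem~\ref{classifyprimitive}(1) and what remains is an elementary gcd verification. The single point that deserves care is in the prime case: one must invoke the definition-level fact that a silver polynomial carries a nonzero coefficient $b_s$ with $s$ strictly between $1$ and $N$ (equivalently $\sum b_k>1$), because it is precisely the existence of this interior index that rules out the alternative $d=N$.
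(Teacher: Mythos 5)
Your proposal is correct and follows exactly the route the paper intends: the corollary is stated as an immediate consequence of Theorem~\ref{classifyprimitive}(1), and your gcd verification in the three cases (including the use of $b_N=1$ together with the existence of an interior nonzero coefficient to rule out $d=N$ in the prime case) is precisely the argument the paper leaves implicit.
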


 We now specialize the inflation-substitution construction of tilings (see subsection \ref{subsec:nonneg}, in particular Corollary \ref{convcor}) to silver polynomials and their companion matrices as partition matrices. We first note some observations.

 \begin{remark}
    Since the partition matrix $U$ is the companion matrix \eqref{Uforsilver} of a silver polynomial, the inflation-substitution map (Definition \ref{def:ins}) and the iteration (Definition \ref{def:inscon}) are determined by the indices
    \[
    1\leq s_1<\cdots<s_r=N
    \]
    with $b_{s_i}=1$, and a permutation $\pi$ of $(s_1,\ldots,s_r)$ such that 
    \[
    {\rm ins} (R_1)=(R_{s_{\pi(1)}},\ldots,R_{s_{\pi(r)}}),
    \]
    since the initial strings of inflated $R_2,\ldots,R_N$ are uniquely determined by the shape of $U$. Moreover, by Lemma \ref{joinlem} only the first index $s_{\pi(1)}$ in the initial string determines the convergence properties of the sequence of iterations, as well as the limit (if it exists) of any subsequence.
 \end{remark}

\begin{proposition}\label{divrem} Consider a primitive silver polynomial, with companion matrix $U$ as in \eqref{Uforsilver}.
\begin{enumerate}
\item If $b_1=1$, then by the inflation-substitution iteration one obtains a convergent sequence, with its limit a $\rho$-inflationary tiling. 
\item Now assume that $b_1=0$.
\begin{itemize}
    \item For any $\ell>0$ such that $b_\ell=1$, one obtains an inflationary tiling with multiplier $\rho^\ell$, by the inflation-substitution iteration and passing to a suitable convergent subsequence.
    \item Generally, by the inflation-substitution iteration one obtains inflationary tilings with minimal multiplier $\rho^m$, if and only if $b_m=1$.
    \end{itemize}
\end{enumerate}
\end{proposition}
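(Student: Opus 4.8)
The plan is to reduce everything to the dynamics of the \emph{leftmost} (smallest-position) tile under the inflation-substitution map ${\rm ins}$, exactly as the preceding Remark suggests. By the shape of the companion matrix \eqref{Uforsilver}, the map is completely described by ${\rm ins}(R_1)=(R_{s_{\pi(1)}},\dots,R_{s_{\pi(r)}})$ together with the decrement rule ${\rm ins}(R_j)=R_{j-1}$ for $2\le j\le N$, which is forced by the subdiagonal of $U$. By Lemma~\ref{joinlem}, both convergence of the sequence $({\rm ins}^k(R_1))_k$ and the limits of its subsequences are governed solely by the leftmost tile, so it suffices to understand how the leftmost tile evolves and then to upgrade leftmost-tile nesting to genuine nesting of the finite tilings, again via Lemma~\ref{joinlem}.

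First I would compute the leftmost-tile orbit. Writing $s:=s_{\pi(1)}$, one application of ${\rm ins}$ sends the leftmost tile $R_1$ to $R_s$, and each further application decrements the index by the rule ${\rm ins}(R_j)=R_{j-1}$; hence the leftmost tile of ${\rm ins}^k(R_1)$ is $R_{s-k+1}$ for $1\le k\le s$, returning to $R_1$ precisely at $k=s$. Thus the leftmost tile is periodic of \emph{exact} period $s$, reaching $R_1$ exactly at the multiples of $s$. Since $s=s_{\pi(1)}$ can be prescribed to be any index $\ell$ with $b_\ell=1$, and cannot be prescribed otherwise, this one computation drives all three assertions.

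For part (1), $b_1=1$ means $1\in\{s_1,\dots,s_r\}$, so we may take $s_{\pi(1)}=1$. Then the leftmost tile is stationary ($s=1$), the full sequence $({\rm ins}^k(R_1))_k$ nests, and by Lemma~\ref{joinlem} it converges to a fixed point of ${\rm ins}$, which by Corollary~\ref{convcor} is a $\rho$-inflationary tiling. For part (2) with $b_1=0$ and any $\ell$ with $b_\ell=1$, we take $s_{\pi(1)}=\ell$; the subsequence $({\rm ins}^{\ell k}(R_1))_k$ then has constant leftmost tile $R_1$, nests by Lemma~\ref{joinlem}, and converges to a tiling $\mathcal T$ with ${\rm ins}^{\ell}(\mathcal T)=\mathcal T$, i.e.\ an inflationary tiling with multiplier $\rho^{\ell}$.

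It remains to pin down the \emph{minimal} multiplier, which I expect to be the only delicate point. Let $\mathcal T$ be the limit obtained from $s_{\pi(1)}=s$; then ${\rm ins}^{s}\mathcal T=\mathcal T$, while for $0<d<s$ the leftmost tile of ${\rm ins}^{d}\mathcal T$ equals $R_{s-d+1}\neq R_1$, so ${\rm ins}^{d}\mathcal T\neq\mathcal T$. Since the set $\{t\ge 1:{\rm ins}^{t}\mathcal T=\mathcal T\}$ is closed under addition and contains $s$ but no integer in $(0,s)$, its minimum is exactly $s$; hence the fundamental multiplier of $\mathcal T$ is $\rho^{s}$. As $\pi$ varies, $s=s_{\pi(1)}$ runs through exactly $\{\ell:b_\ell=1\}$, so the minimal multipliers that occur are precisely the $\rho^{\ell}$ with $b_\ell=1$, the smallest being $\rho^m$ with $m=\min\{\ell:b_\ell=1\}$. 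The main obstacle is this exactness: one must rule out any accidental coincidence making $\mathcal T$ a fixed point of some ${\rm ins}^{d}$ with $d<s$, and the leftmost-tile argument above is precisely what excludes it.
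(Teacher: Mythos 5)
Your argument is correct and follows essentially the same route as the paper's: reduce to the single-entry initial string $(R_1)$ via Lemma \ref{joinlem}, track the leftmost tile through the forced decrement $\rho R_j=R_{j-1}$ so that $\rho^k R_1=R_{s-k+1}|\cdots$ returns to $R_1$ exactly at multiples of $s=s_{\pi(1)}$, and invoke Proposition \ref{stableprop} and Corollary \ref{convcor} for convergence of the full sequence (when $s=1$) or of the subsequence with step $s$. The exact-period observation you use to pin down the minimal multiplier is precisely the paper's closing computation $\rho^k R_1=R_{\ell+1-k}|\cdots$ for $k<\ell$ and $\rho^\ell R_1=R_1|\cdots$.
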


\begin{proof} The proof essentially amounts to an application of Corollary \ref{convcor}. We use some terminology introduced in the Appendix, Subsections \ref{subsec:nonneg} and \ref{convergence}.
\begin{enumerate}
\item If $b_1=1$, then there exist substitution rules satisfying $\rho R_1=R_1|\cdots$, and any such substitution rule yields a convergent sequence, with its limit a $\rho$-inflationary tiling.
\item Now assume that $b_1=0$. By Lemma \ref{joinlem} it suffices to consider initial strings $(R_j)$, with $\mathcal X=(R_j,E,\ldots)$.
 By the shape of the companion matrix, there exists no index $j$ such that $\rho R_j=R_j|\cdots$; therefore none of the sequences constructed via Definition \ref{def:inscon} converges. 
 Now consider the initial string $(R_j)$. Then there exists a smallest integer $k>1$ such that $\rho^k R_j=R_j|\cdots$. The sequence $(\mathcal X_{1+k\ell})_{\ell\geq 0}$
    converges, and the limit is inflationary with multiplier $\rho^k$. \\
Thus for any $\ell>0$ such that $b_\ell=1$, one obtains an inflationary tiling with multiplier $\rho^\ell$, by 
    any substitution rule satisfying $\rho R_1=R_\ell|\cdots$, and passing to the subsequence $(\mathcal X_{1+\ell m})_{m\geq 0}$.
    To see the remaining assertion, since $\rho R_j=R_{j-1}$ when $j>1$, one may furthermore assume $j=1$. Now assume $b_\ell=1$ and a decomposition $\rho R_1=R_\ell|\cdots$. Then $\rho^k\,R_1=R_{\ell+1-k}|\cdots$ for $k<\ell$, and $\rho^\ell\,R_1=R_1|\cdots$. The assertion follows.
\end{enumerate}
\end{proof}

\begin{remark} \label{lenghthoftiles}
    In the case with $b_1=1$, according to Section \ref{subsec:nonneg}, Definitions \ref{def:ins} and \ref{def:inscon}, one can construct inflationary tilings of the half-line from the primitive silver number $\sigma$, with prototiles of lengths
\[
1,\,\frac{1}{\sigma},\ldots,\frac{1}{\sigma^{N-1}}
\]
and multiplier $\sigma$. 
For the case $N=2$ there is just one tiling of this type; for $N>2$ the freedom of choice for arranging prototiles in inflated tiles provides more options for the inflation-substitution construction. 
But in the primitive cases with $b_1=0$, this construction cannot produce $\sigma$-inflationary tilings. 
\end{remark}

\begin{remark} \label{aboutrhohochk}
\begin{enumerate}
\item We briefly discuss tilings obtainable in the non-primitive case from iteration of inflation and substitution. Thus let $d>1$ and $P(X) = Q(X^d)$, with primitive companion matrix $U_Q$ for $Q$. The companion matrix $U_P$ for $P$ is irreducible; see e.g. \cite{HoJo}, Theorem 6.2.24, part (c).

Writing
$Q(X)= X^N-\sum b_j X^{N-j}$, the $Nd\times Nd$ companion matrix $U_P$ of $P$, has in the first row non-vanishing entries $b_j$ at most  at positions $(1,d\cdot j)$. The inflation-substitution iteration according to subsection \ref{subsec:nonneg} is applicable to $U_P$: the matrix  $U_P$ is irreducible, as noted just above, and  Lemma \ref{reslem} implies that the largest real root $\sigma$ of $P(X)$ satisfies $1 < \sigma < 2$. By the shape \eqref{Uforsilver} of the companion matrix, it admits the eigenvector  $(\sigma^{Nd-1},\ldots,\sigma,1)^{\rm tr}$, thus the length of prototile $j$ equals $L_0\cdot \sigma^{-j}$, $1\leq j\leq Nd-1$, with  some positive factor $L_0$.
Let us now  consider any iteration. Since $U_P$ is a companion matrix of a silver polynomial we obtain $\rho R_j = R_{j-1}$
for all prototiles of index $j = 2, \dots N.$ Finally, we read off from
$U_P$ the substitution $\rho R_1 = R_{\pi(1)}|R_{\pi(2)} \dots,$ where $\pi$ is some a permutation of the numbers $1, \dots N,$  that restricts to a permutation of those prototiles $R_{sd +1}$ for which 
$b_{ls + 1}^{(P)} \neq 0$ holds. Since we consider a silver polynomial $P$,
the condition $b^{(P)}_{ls + 1} \neq 0$ actually means $b^{(P)}_{ls + 1} = 1.$
Therefore, the only free choice in this inflation-substitution construction is the restriction of $\pi$ to the set of non-vanishing integers  $ds + 1$ satisfying $b^{(P)}_{ds + 1} \neq 0.$ 
Finally, let us consider $U_Q$. It is the companion matrix of $Q$ and has non-vanishing coefficients $b^{(Q)}_k$ in the first row only for $k = s +1$,
if $b^{(P)}_{ds+1} \neq 0.$ 
Reading the restriction of $\pi $ above as a permutation $\pi^{(Q)}$ of the set of these integers $s+1,$ we obtain, from an inflationary tiling $\mathcal{T}^{(P)}$ for $P$ constructed with $U_P$ and $\pi$, an inflationary tiling $\mathcal{T}^{(Q)} $for $Q$ constructed with $U_Q$ and $\pi^{(Q)}.$ 
\item It is easy to verify that  
from a tiling $\mathcal{T}^{(Q)}$ one can construct conversely a tiling 
$\mathcal{T}^{(P)}$.
\item Theorem \ref{classifyprimitive} and the construction outlined in this remark can be generalized to irreducible non-negative integer matrices with spectral radius $\rho >1.$ 

\end{enumerate}
\end{remark}

\subsection{The golden number} \label{goldennumber}

{Since} the golden number $ \rho_2 = \phi$ is a distinguished silver number, the results of the previous section apply, and we have a $\phi$-inflationary tiling from Proposition \ref{inttile}, with prototiles of length $1$ and $1/\phi$. On the other hand, one obtains a tiling via inflation and substitution, with the same prototiles. Resnikoff \cite{Res} observed that these two tilings are identical, and his discussion of $\phi$-integers is our motivation for considering more general silver numbers and silver integers.

The fact that these two tilings coincide seems of some interest, because the representation via $\phi$-integers provides a ``closed form'' representation of the endpoints. We will give a detailed proof in the following, and we will generalize the statement and the proof to distinguished silver numbers in the next subsection.

We recall that the 
normal form representation  of a nonzero $\phi$-integer satisfies
\begin{equation}\label{eqphiint}
x=\phi^m+\sum_{j=1}^m c_i\phi^{m-i}; \quad c_i c_{i+1}=0 \text{  for all  }i\geq 0
\end{equation}
with some uniquely determined non-negative integer $m$.

Next we  specialize Theorem \ref{rhotiles}:
\begin{corollary}\label{finedistlem}
    Let the normal form \eqref{eqphiint} of $x$ be given. Then the next largest $\phi$-integer following $x$ is equal to $x+1$ if $c_m=0$, and equal to $x+1/\phi=x+\phi-1$ if $c_m=1$.
\end{corollary}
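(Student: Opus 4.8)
The plan is to obtain the statement by specializing Theorem~\ref{rhotiles} to the case $N=2$ and then translating its case distinction (driven by the remainder $r$) into the single condition on the constant coefficient $c_m$. First I would record the two admissible gaps: by Proposition~\ref{inttile} (equivalently, directly from Theorem~\ref{rhotiles}) with $N=2$, two consecutive $\phi$-integers differ by one of the prototile lengths $1$ and $1/\phi$, where I use the identity $\tfrac1\phi+\tfrac1{\phi^2}=1$, valid because $\phi^2=\phi+1$. Hence the successor $x'$ of $x$ satisfies $x'-x\in\{1,\,1/\phi\}$, and the whole content of the corollary is to decide \emph{which} value occurs in terms of $c_m$.

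Next I would extract the correspondence between $c_m$ and the parameter $r$ of Theorem~\ref{rhotiles}. Splitting off the common high-order prefix $z$ of $x$ and its successor, that theorem shows the trailing differing block $\widetilde y$ of $x$ is the largest $\phi$-integer of its degree $n$, and that (when $\widetilde y\neq 0$) the gap equals $\tfrac1\phi+\cdots+\tfrac1{\phi^{\,N-r}}$, with $r$ the remainder of $n+1$ modulo $N=2$. Since $z$ occupies only powers above $\phi^n$, the constant coefficient $c_m$ of $x$ coincides with the constant coefficient of $\widetilde y$. By Proposition~\ref{largest} the largest $\phi$-integer of degree $n$ has remainder term $R=\sum_{\ell=0}^{r-1}\phi^\ell$, whose only degree-$0$ contribution is present exactly when $r=1$. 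Thus $c_m=1\iff r=1\iff$ gap $=\tfrac1\phi$, while $c_m=0\iff r=0\iff$ gap $=\tfrac1\phi+\tfrac1{\phi^2}=1$.

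It then remains to treat the boundary case $\widetilde y=0$ (the second bullet of Theorem~\ref{rhotiles}), where the successor is $x+1$ and $x$ carries no constant term, so $c_m=0$, in agreement with the claimed gap $1$. Finally I would rewrite $\tfrac1\phi=\phi-1$ to match the stated alternative form $x+\phi-1$, completing both cases.

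The main obstacle is the bookkeeping in the second step: one must verify carefully that $c_m$ is faithfully inherited by the trailing block $\widetilde y$ (so that Proposition~\ref{largest} genuinely applies to it) and that the parities line up, i.e.\ that $r=1$ matches $c_m=1$. Put differently, in a self-contained treatment of the case $c_m=0$ one would have to rule out that $x+1/\phi$ is itself a $\phi$-integer, and this is precisely what the $c_m\leftrightarrow r$ correspondence supplies; for this reason I would route the argument through Theorem~\ref{rhotiles} rather than attempting a purely elementary construction of the successor.
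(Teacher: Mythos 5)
Your proposal is correct and follows essentially the same route as the paper: the paper states Corollary~\ref{finedistlem} without a separate proof, presenting it as a direct specialization of Theorem~\ref{rhotiles} to $N=2$, and your argument is exactly that specialization carried out in detail (including the translation $c_m=1\iff r=1$ via Proposition~\ref{largest} and the identity $\tfrac1\phi+\tfrac1{\phi^2}=1$). The extra bookkeeping you supply is sound and only makes explicit what the paper leaves to the reader.
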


Now we state the main result for tilings from golden integers.
\begin{proposition}\label{goldeninttileprop}
    Let $x_0=0$ and denote by $x_1,x_2,\ldots$ the ordered sequence of non-zero $\phi$-integers. 
    \begin{enumerate}[(a)]
        \item The intervals $[x_{k-1},x_k]$ form a $\phi$-inflationary tiling of the half-line $[0,\infty)$, with prototiles of lengths $1$ resp. \ $1/\phi$.
        \item This tiling is equal to the one obtained via Definitions \ref{def:ins} and \ref{def:inscon}, with the partition matrix $U=\begin{pmatrix}
            1&1\\ 1&0
        \end{pmatrix}$ and initial tile $R_1=[0,\,1]$.
    \end{enumerate}
\end{proposition}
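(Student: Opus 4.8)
The plan is to dispatch (a) by specialization and then devote the real work to (b). For (a) I would note that it is precisely the case $N=2$ of Proposition \ref{inttile}: there the prototile lengths $1,\ \sum_{i=1}^{N-k}\rho^{-i}$ collapse to $1$ and $\sum_{i=1}^{1}\phi^{-i}=1/\phi$, and the resulting tiling is inflationary with multiplier $\phi$. Equivalently, Corollary \ref{finedistlem} shows directly that every difference $x_k-x_{k-1}$ of consecutive $\phi$-integers is either $1$ or $1/\phi$, so there are exactly two prototiles, while the inflation feature is Remark \ref{multsigma}(1).

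For (b) the idea is to prove that the $\phi$-integer tiling $\mathcal S=([x_{k-1},x_k])_{k\ge1}$ is a fixed point of the inflation-substitution map $\mathrm{ins}$ attached to $U$, under the substitution $\phi R_1=R_1\,|\,R_2$ and $\phi R_2=R_1$, and that its first tile is $R_1=[0,1]$. The computational engine is the observation that multiplication by $\phi$ preserves normal form: if $x_{k-1}=\phi^m+\sum_{i=1}^m c_i\phi^{m-i}$ is in normal form, then $\phi x_{k-1}=\phi^{m+1}+\sum_{i=1}^m c_i\phi^{m+1-i}$ is again in normal form, now with vanishing constant coefficient. With this in hand I would examine, tile by tile, which $\phi$-integers fall in the inflated interval $[\phi x_{k-1},\phi x_k]$, applying Corollary \ref{finedistlem}. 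When $[x_{k-1},x_k]$ has length $1$ (i.e. $c_m=0$), the successor of $\phi x_{k-1}$ is $\phi x_{k-1}+1$ (its last coefficient being $0$), and the successor of that is $\phi x_{k-1}+1+1/\phi=\phi x_k$ (its last coefficient now being $1$), so the inflated interval breaks into exactly $R_1\,|\,R_2$. When $[x_{k-1},x_k]$ has length $1/\phi$ (i.e. $c_m=1$), one has $\phi x_k=\phi x_{k-1}+1$, which is already the successor of $\phi x_{k-1}$, so the inflated interval is a single $R_1$. This is exactly the prescribed substitution, whence $\mathrm{ins}(\mathcal S)=\mathcal S$.

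To finish, since $\mathcal S$ is $\mathrm{ins}$-invariant and begins with $R_1$, its initial segment after $n$ inflations coincides with $\mathrm{ins}^n(R_1)$; as $\mathrm{ins}^n(R_1)$ has total length $\phi^n\to\infty$, letting $n\to\infty$ identifies $\mathcal S$ with the limit tiling produced by the iteration of Definitions \ref{def:ins} and \ref{def:inscon} (which for $N=2$ is the unique tiling of this type, cf. Proposition \ref{divrem}(1) and Remark \ref{lenghthoftiles}). The main obstacle is the middle step: one must check that inflation by $\phi$ followed by the substitution reproduces the ordering of $\phi$-integers exactly, that is, that the successor rule of Corollary \ref{finedistlem} is compatible with multiplication by $\phi$. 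Once that compatibility is verified, the remainder is bookkeeping together with appeals to the cited results.
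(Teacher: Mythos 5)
Your proposal is correct and follows essentially the same route as the paper: part (a) by specializing Proposition \ref{inttile}, and part (b) by inflating each tile $[x_{k-1},x_k]$, applying the successor rule of Corollary \ref{finedistlem} to $\phi x_{k-1}$ (whose normal form ends in a zero coefficient) in the two cases $c_m=0$ and $c_m=1$, and matching the outcome with the substitution rules $\phi R_1=R_1|R_2$, $\phi R_2=R_1$. Your extra remarks on normal-form preservation under multiplication by $\phi$ and on the limit identification only make explicit what the paper leaves implicit.
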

\begin{proof} 
    Part (a) is a special case of Proposition \ref{inttile}. Turning to part (b), consider the interval $J=[x_k,x_{k+1}]$ and its image $\phi J=[\phi x_k,\phi x_{k+1}]$. If $J$ has length $1$, then $\phi x_k+1$ is the next largest integer following $\phi x_k$, followed in turn by $\phi x_{k+1}$. So the inflated interval is the union of a prototile of length $1$ and a prototile of length $1/\phi$, in this order. If $J$ has length $1/\phi$, then $\phi x_{k+1}$ is the next largest integer to $\phi x_k$. So the inflated interval is a prototile of length $1$. Given the starting interval $[0,1]$, these observations correspond to the inflation and substitution rules stated in Definitions \ref{def:ins} and \ref{def:inscon}.
\end{proof}

\begin{corollary}
For the golden number $\phi$ the corresponding $\phi-$integers form 
one of the tilings obtained by the inflation and substitution rules 
stated in  Definitions \ref{def:ins} and \ref{def:inscon}.
\end{corollary}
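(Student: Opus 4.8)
The plan is to obtain the corollary as an immediate consequence of Proposition~\ref{goldeninttileprop}, in which the essential work has already been carried out. First I would recall part~(a): the ordered $\phi$-integers $0=x_0<x_1<x_2<\cdots$ are precisely the endpoints of a $\phi$-inflationary tiling of $[0,\infty)$ whose prototiles have lengths $1$ and $1/\phi$. This guarantees that the object on the ``integer'' side is genuinely a tiling of the kind to be compared, so that the assertion of the corollary is well posed.

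Next I would invoke part~(b), which identifies this tiling with the one produced by the inflation-substitution iteration of Definitions~\ref{def:ins} and~\ref{def:inscon}, applied to the partition matrix $U=\begin{pmatrix}1&1\\1&0\end{pmatrix}$ with initial tile $R_1=[0,1]$. The key point is that this $U$ is exactly the companion matrix of the silver polynomial $x^2-x-1$ of the golden number, so the construction is a legitimate instance of the general scheme; the corollary then reduces to the statement that the tiling by $\phi$-integers lies in the family produced by that scheme.

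The closest thing to an obstacle is interpreting the phrase \emph{one of the tilings}. By Remark~\ref{lenghthoftiles}, for $N=2$ there is in fact just one inflation-substitution tiling of this type, so the relevant family is a singleton and the membership claim is unambiguous; the freedom in arranging prototiles that would produce several distinct tilings only appears for $N>2$. I would therefore conclude by citing Proposition~\ref{goldeninttileprop}(b) together with this remark, noting that the corollary mainly serves to record explicitly the coincidence that motivates the generalization to arbitrary distinguished silver numbers in the following subsection.
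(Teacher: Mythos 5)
Your proposal is correct and matches the paper's intent exactly: the corollary is stated as an immediate consequence of Proposition~\ref{goldeninttileprop}, whose parts (a) and (b) do all the work, and the paper offers no separate argument beyond that. Your additional observation via Remark~\ref{lenghthoftiles} that the family of inflation-substitution tilings is a singleton for $N=2$ is a sensible clarification of the phrase ``one of the tilings,'' consistent with the paper's own remarks.
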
 
\subsection{Distinguished silver numbers of higher degree}

Here one cannot expect an exact replica of Proposition \ref{goldeninttileprop}, since the prototiles in both constructions  are not identical: In the integer case we have lengths
\begin{equation*}
1, \quad \dfrac{1}{\rho}+\cdots+\dfrac{1}{\rho^{N-1}}, \quad \dfrac{1}{\rho}+\cdots+\dfrac{1}{\rho^{N-2}}, \quad \ldots \quad \dfrac{1}{\rho},
\end{equation*}
while in the inflation-substitution case with partition matrix \eqref{Uforsilver} we have lengths
\[
1,\quad\dfrac{1}{\rho},\quad\ldots\quad \dfrac{1}{\rho^{N-1}}.
\]

But one obtains:
\begin{theorem}\label{silvintprop}Let $\rho_N = \rho$ be the distinguished silver number of degree $N\geq 3$.
\begin{enumerate}
    \item Consider the inflation-substitution tiling $\mathcal T$ with prototiles $R_j$ of length $\rho^{1-j}$, $1\leq j\leq N$, with starting interval $[0,1]$ and substitution rules
\[
\rho R_1= R_1|R_2|\cdots|R_{N-1}|R_N;\quad \rho R_j= R_{j-1},\,2\leq j\leq N;
\]
equivalently constructed from the companion matrix in (\ref{Uforsilver}), but  with all $b_i=1$.
Then the tiling of the half-line by $\rho$-integers is subordinate to $\mathcal T$; i.e., every $\rho$-integer is the endpoint of some tile in $\mathcal T$.
\item On the other hand, consider the prototiles $\widehat R_j$ of lengths, respectively, 
\[
1,\quad \dfrac{1}{\rho}+\cdots+\dfrac{1}{\rho^{N-1}}, \quad \dfrac{1}{\rho}+\cdots+\dfrac{1}{\rho^{N-2}}, \quad \ldots \quad \dfrac{1}{\rho},
\]
with starting interval $[0,1]$ and substitution rules
\[
\rho \widehat R_j= \widehat R_1|\widehat R_{j+1},\, 1\leq j\leq N-1,\quad \rho \widehat R_N= \widehat R_{1}.
\]
With these one obtains a $\rho$-inflationary tiling $\widehat{\mathcal T}$, which may actually be constructed from the transpose of the companion matrix from \eqref{Uforsilver}; see Remark \ref{companiontypes}, equation \eqref{CDWtranpose}, with all $b_i=1$.
The endpoints of this tiling are exactly the $\rho$-integers. 
\end{enumerate}
\end{theorem}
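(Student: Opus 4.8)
The plan is to prove (2) first and then obtain (1) as a formal consequence. For (2) I would start from Proposition~\ref{inttile}, which already exhibits the $\rho$-integers as the endpoints of a $\rho$-inflationary tiling $\mathcal I$ of $[0,\infty)$ whose prototiles carry exactly the lengths $1,\ \tfrac1\rho+\cdots+\tfrac1{\rho^{N-1}},\ \ldots,\ \tfrac1\rho$ of $\widehat R_1,\ldots,\widehat R_N$. Writing $\ell(\cdot)$ for the length of a tile, the inequalities $1>\tfrac1\rho+\cdots+\tfrac1{\rho^{N-1}}>\cdots>\tfrac1\rho$ show these $N$ lengths are pairwise distinct, so the length of a tile already fixes its type. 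It then remains to verify that inflating $\mathcal I$ by $\rho$ subdivides each tile exactly according to the rules $\widehat R_j\mapsto\widehat R_1|\widehat R_{j+1}$ $(1\le j<N)$ and $\widehat R_N\mapsto\widehat R_1$; granting this, $\mathcal I=\widehat{\mathcal T}$ follows because the inflation-substitution tiling with initial tile $\widehat R_1=[0,1]$ and these rules is uniquely determined (Definitions~\ref{def:ins},~\ref{def:inscon} and Corollary~\ref{convcor}).

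The heart of (2) is a counting argument based on a \emph{units-digit characterization}. List the $\rho$-integers as $0=x_0<x_1<\cdots$. By uniqueness of the normal form (Corollary~\ref{unique}) each $\rho$-integer $w$ has a well-defined constant coefficient in $\{0,1\}$; dividing the normal form by $\rho$ (if that coefficient is $0$), respectively subtracting $1$ and then dividing (if it is $1$), shows that $w=\rho x$ or $w=\rho x+1$ for some $\rho$-integer $x$. As multiplication by $\rho$ is order-preserving and maps $\rho$-integers to $\rho$-integers (Remark~\ref{multsigma}), the numbers $\rho x_0<\rho x_1<\cdots$ are \emph{precisely} the $\rho$-integers of constant coefficient $0$, so no such integer lies strictly between consecutive $\rho x_i,\rho x_{i+1}$. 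Hence any $\rho$-integer interior to $(\rho x_i,\rho x_{i+1})$ equals $\rho x_\ell+1$ for some $\ell$; since every tile of $\mathcal I$ has length at least $1/\rho$, one gets $\rho x_{i-1}+1\le\rho x_i$ and $\rho x_\ell+1\ge\rho x_{i+1}$ for $\ell\neq i$, leaving $\rho x_i+1$ as the only candidate. Finally, the defining relation of $\rho$ yields $\rho\,\ell(\widehat R_j)=1+\ell(\widehat R_{j+1})$ for $j<N$ and $\rho\,\ell(\widehat R_N)=1$, so $\rho x_i+1$ lies strictly inside exactly when $[x_i,x_{i+1}]$ is not the shortest tile $\widehat R_N$; in that case it splits the inflated tile into a piece of length $1$ (a $\widehat R_1$) followed by a piece of length $\ell(\widehat R_{j+1})$ (a $\widehat R_{j+1}$), while for the shortest tile $\rho x_i+1=\rho x_{i+1}$ and one obtains a single $\widehat R_1$. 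This is precisely the asserted substitution.

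For (1) I would present $\mathcal T$ as a refinement of $\widehat{\mathcal T}=\mathcal I$. Let $\Phi$ be the length-preserving replacement $\widehat R_1\mapsto R_1$ and $\widehat R_j\mapsto R_2|R_3|\cdots|R_{N-j+2}$ $(2\le j\le N)$; the tiles on the right have total length $\tfrac1\rho+\cdots+\tfrac1{\rho^{N-j+1}}=\ell(\widehat R_j)$, so $\Phi$ merely subdivides each tile without moving its endpoints. Writing $\widehat\tau,\tau$ for the two substitutions, one checks the intertwining $\Phi\circ\widehat\tau=\tau\circ\Phi$ on generators: for instance $\Phi(\widehat R_1|\widehat R_2)=R_1|R_2|\cdots|R_N=\tau(R_1)$, next $\Phi(\widehat R_1|\widehat R_{j+1})=R_1|R_2|\cdots|R_{N-j+1}=\tau(R_2|\cdots|R_{N-j+2})$, and for $\widehat R_N$ one has $\Phi(\widehat R_1)=R_1=\tau(R_2)=\tau(\Phi(\widehat R_N))$. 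Consequently the tile-type sequence of $\mathcal T$ is $\Phi$ applied to that of $\widehat{\mathcal T}$, and since $\Phi$ only refines tiles starting from the common left endpoint $0$, every endpoint of $\widehat{\mathcal T}$ — that is, by (2), every $\rho$-integer — is an endpoint of $\mathcal T$.

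The step I expect to be the main obstacle is the induced-substitution computation in (2): showing that an inflated tile contains exactly the predicted $\rho$-integers and no others. The delicate feature is the interaction with \emph{carrying} in the normal form — the coincidence $\rho x_i+1=\rho x_{i+1}$ realizing the wrap-around rule $\widehat R_N\mapsto\widehat R_1$ occurs precisely because adding $1$ to $\rho x_i$ triggers the relation $1+\rho+\cdots+\rho^{N-1}=\rho^N$ exactly when $[x_i,x_{i+1}]$ is the shortest tile. The counting argument above is arranged so as to deduce this coincidence from the length arithmetic together with the units-digit characterization, thereby sidestepping a direct case analysis of carries. Once (2) is established, part (1) is purely formal, resting only on the intertwining $\Phi\circ\widehat\tau=\tau\circ\Phi$.
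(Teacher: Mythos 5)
Your proof is correct and follows essentially the same route as the paper: both rest on Proposition \ref{inttile} / Theorem \ref{rhotiles} and then check that inflation by $\rho$ reproduces exactly the stated substitution rules via the identities $\rho\,\ell(\widehat R_j)=1+\ell(\widehat R_{j+1})$ and $\rho\,\ell(\widehat R_N)=1$. The paper treats (1) and (2) in parallel within a single case analysis rather than deducing (1) from (2) via an intertwining map, and it leaves implicit the point you make explicit with the units-digit characterization — that $\rho x_i+1$ is the only $\rho$-integer that can lie in the interior of an inflated tile — so your write-up is, if anything, a more detailed version of the same argument.
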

\begin{proof} Let 
\[
0=x_0<x_1<x_2<\cdots
\]
be the ordered sequence of $\rho$-integers.  According to Theorem \ref{rhotiles}, the differences of consecutive integers are given as follows:
\begin{itemize}
\item Case 1: $x_{k+1}=x_k+1$, so the interval $[x_k,x_{k+1}] $ will be inflated to 
\[
[\rho x_k,\rho x_{k+1}]=[\rho x_k,\rho x_k+1+\dfrac1{\rho}+\cdots+\dfrac{1}{\rho^{N-1}}].
\]
This corresponds to the substitution $\rho R_1= R_1|R_2|\cdots|R_{N-1}|R_N$, respectively $\rho \widehat R_1=\widehat R_1|\widehat R_2$.
\item Case j ($j\geq 2$): Here we have 
\[
x_{k+1}=x_k+\dfrac1{\rho}+\cdots+\dfrac{1}{\rho^{N-j}};\quad \rho x_{k+1}=\rho x_k+1+ \dfrac1{\rho}+\cdots+\dfrac{1}{\rho^{N-j-1}}.
\]
This corresponds to the substitutions 
\[
\rho\left(R_2|\cdots|R_{N-j+1}\right)=R_1|\cdots|R_{N-j},
\]
which are induced by
$\rho R_\ell= R_{\ell-1}$, $\ell>1$; respectively the substitutions 
\[
\rho\widehat R_k=\widehat R_1|\widehat R_{k+1},\, k\leq N-1;\quad \rho \widehat R_N=\widehat R_1.
\]
\end{itemize}
\end{proof}

\begin{corollary} \label{rhointinfl}
Let $\rho$ be a distinguished silver number. Then the
 tiling $\mathcal{T}_\rho$ with the set of $\rho-$integers as endpoints is
not periodic.
\end{corollary}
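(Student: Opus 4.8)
The plan is to reduce the statement to the general non-periodicity theorem of Penrose recalled in Section~\ref{appsection}, which asserts that every tiling of the half-line $[0,\infty)$ produced by the inflation-substitution construction from a primitive non-negative integer matrix with irrational spectral radius is non-periodic. It therefore suffices to realize $\mathcal{T}_\rho$ as a tiling of exactly this kind.

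First I would invoke Theorem~\ref{silvintprop}(2): the tiling $\widehat{\mathcal T}$ whose endpoints are precisely the $\rho$-integers is obtained by the inflation-substitution construction (Definitions~\ref{def:ins} and~\ref{def:inscon}) with partition matrix the transpose of the companion matrix \eqref{Uforsilver} in which all $b_i=1$. Since the endpoint set of $\mathcal{T}_\rho$ is by definition the set of $\rho$-integers, we have $\mathcal{T}_\rho=\widehat{\mathcal T}$, so it is enough to check that the partition matrix driving this construction meets the hypotheses of the Penrose theorem.

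Next I would verify these hypotheses. The companion matrix \eqref{Uforsilver} with all $b_i=1$ is a $0$--$1$ matrix, hence a non-negative \emph{integer} matrix; by Corollary~\ref{simple} it is primitive, and both primitivity and the set of eigenvalues are preserved under transposition, so its transpose is again a primitive integer matrix with the same spectral radius. That spectral radius is the Perron root $\rho=\rho_N$, which is \emph{irrational} by Theorem~\ref{distpol}(1). Thus all hypotheses of the Penrose result are satisfied, and it delivers the non-periodicity of $\mathcal{T}_\rho$ at once.

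The only point requiring care, and the one I would regard as the main (if modest) obstacle, is matching the present setup exactly to the hypotheses of the Appendix: one must ensure that the matrix actually generating $\widehat{\mathcal T}$ is an integer primitive matrix, which it is, being the transpose of the $0$--$1$ companion matrix with all $b_i=1$, and that its spectral radius is the irrational number $\rho$ rather than some power of it or a root contributed by a reducible factor. Both are guaranteed here because $\rho$ is a Pisot number with irreducible minimal polynomial $P_N$ (Theorem~\ref{distpol}), so no rational or otherwise degenerate situation can arise and the Penrose theorem applies verbatim.
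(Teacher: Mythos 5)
Your argument is correct and follows the same route as the paper: both invoke Theorem~\ref{silvintprop}(2) to realize $\mathcal{T}_\rho$ as an inflation-substitution tiling with a primitive partition matrix of irrational spectral radius $\rho$, and then apply Theorem~\ref{penrose}. You merely spell out in more detail the verification of primitivity (via Corollary~\ref{simple} and invariance under transposition) and irrationality (Theorem~\ref{distpol}(1)), which the paper leaves implicit.
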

\begin{proof}
By the second part of the theorem, $\mathcal{T}_\rho$ is also obtainable from an inflation-substitution iteration with a primitive partition matrix with spectral radius $\rho$. Now the assertion follows directly from 
 Theorem \ref{penrose}.
 \end{proof}   
 
 \section{About non-distinguished silver numbers}\label{sec:nondis}

\subsection{Non-distinguished silver numbers of degree three}

As noted in Example \ref{exsilpo}, silver numbers of degree three correspond precisely to the polynomials
\[X^3 - X^2 - X - 1, \quad X^3 - X^2  - 1, \quad X^3 - X - 1.\] 
The first defines the distinguished silver number $\rho_3$ of degree three, which is also called the ``tribonacci'' number. The second defines the ``supergolden" number $\psi$ and the third polynomial defines the ``plastic number" $\theta$. An application of Corollary \ref{simple} shows that the partition matrices of all these silver numbers are primitive.

With regard to inflation-substitution tilings with inflationary factor $\psi$ and $\theta$ respectively, corresponding to these silver numbers, Proposition \ref{convcor} and Remark \ref{divrem} are readily applicable. In particular the distinguished silver number of degree three yields  an inflationary tiling with multiplier $\rho_3$.\\
We will discuss the  two non-distinguished silver numbers of degree three, and tilings related to them, as follows.
\subsubsection{The super-golden number.} The super-golden number $\psi$ is the largest zero of the irreducible polynomial $X^3 - X^2 -1$ ; see Lin \cite{Lin} for more of its properties. The partition matrix \eqref{Uforsilver} specializes to

\begin{equation} \label{Usupergolden}
U=\begin{pmatrix} 1&0&1\\ 
                               1 & 0& 0\\
                                0&1 & 0\\
    \end{pmatrix}.
\end{equation}

According to Example \ref{instileex}(b), one obtains a $\psi$-inflationary tiling from the inflation-substitution iteration, as well as a  $\psi^3$-inflationary tiling.
We observe  that $\psi^2$ is not a silver number: note that $\psi>1.46$, hence $\psi^2>2$, and use Lemma \ref{reslem}.\\
       There remains the question whether the $\psi$-integers are subordinate to some tiling with prototiles of lengths $L$, $L/\psi$ and $L/\psi^2$, for some basic length $L>0$ (not necessarily $L=1$). We show that this is not the case for any rational $L$.\\
    Assume the contrary. Since $\psi+1$ and $\psi^2$ are $\psi$-integers, their difference would be a non-negative integer combination of the prototile lengths, thus 
    \begin{equation*}
        \psi^2-\psi-1=La+Lb/\psi+Lc/\psi^2;
    \end{equation*}
    equivalently
    \begin{equation*}
       \psi^4- \psi^3-\psi^2=La\psi^2+Lb\psi+Lc.
    \end{equation*}
    Noting that $\psi^4-\psi^3=\psi$, and combining this with the defining identity for $\psi$, one finds
    \begin{equation*}
        (La+1)\psi^2+(Lb-1)\psi+Lc=0,
    \end{equation*}
    with $La+1>0$. This implies that $\psi$ is a root of a quadratic polynomial with rational coefficients. Since the polynomial $x^3-x^2-1$ is irreducible over the rationals (otherwise it would have a degree one factor and a rational root, which is excluded by Lemma \ref{reslem}), it is the minimal polynomial of $\psi$, and we arrive at a contradiction. But note that this argument does not exclude possible tilings with irrational $L$.
\subsubsection{The plastic number} The {\em plastic number} $\theta$ is the largest zero of the irreducible polynomial $X^3 - X -1$; see van der Laan \cite{Laan}, Shannon et al. \cite{SAH} for more properties. Here we have the partition matrix
\begin{equation} \label{Usupergolden}
U=\begin{pmatrix} 0&1&1\\ 
                               1 & 0& 0\\
                                0&1 & 0\\
    \end{pmatrix}.
\end{equation}
According to Example \ref{instileex}(c), by the inflation-substitution procedure one cannot obtain $\theta$-inflationary tilings, but from subsequences one obtains tilings with multiplier $\theta^2$. As shown there, $\theta^2$ is not a root of an irreducible silver polynomial, but at this point we cannot exclude the possibility that $\theta^2$ is a root of a reducible silver polynomial. \\
         Moreover a variant of the argument for the super-golden number shows that the integers for the plastic number 
    are not subordinate to an inflationary tiling of the half-line, with prototiles of lengths $L$, $L/\theta$ and $L/\theta^2$ with any rational $L$:\\
    Again, assume the contrary. Since $1$ and $\theta$ are $\theta$-integers, their difference would be a non-negative integer combination of the prototile lengths, thus
    \begin{equation*}
        \theta-1=La+Lb/\theta+Lc/\theta^2;
    \end{equation*}
    equivalently
    \begin{equation*}
        \theta^3-\theta^2=La\theta^2+Lb\theta+Lc.
    \end{equation*}
    Combining this with the defining identity for $\theta$, one finds
    \begin{equation*}
        (La+1)\theta^2+(Lb-1)\theta+(Lc-1)=0,
    \end{equation*}
    with $La+1>0$. This implies that $\theta$ is a root of a quadratic polynomial with rational coefficients. Since the polynomial $x^3-x-1$ is irreducible over the rationals (otherwise it would have a degree one factor and a rational root, which is excluded by Lemma \ref{reslem}), it is the minimal polynomial of $\theta$, and we arrive at a contradiction. As in the previous example, we cannot exclude tilings with irrational basic length $L$.
\subsection{A wider perspective}
In view of  the previous results and examples, for a 
non-distinguished silver number $\rho$ of degree $N$, it would be interesting to know whether the corresponding ``tiling via integers''  (which actually may not be a tiling in the sense of Section \ref{appsection})  is subordinate to some tiling with prototiles of lengths 
$L, L/{\rho}, \dots , {L/\rho^{N-1}}$, for some basic length $L>0$. To rephrase the question: Is there a tiling with these prototiles such that every $\rho$-integer is the endpoint of some tile? (For the supergolden number, and for the plastic number,
we have just shown that this is not possible with rational basic length $L$.) 

If, for some $\rho$, the answer to the above question is positive, then one would have a  possibly new method to construct inflationary tilings. And in particular, a positive answer for the plastic number $\theta$, would show that a $\theta$-inflationary tiling exists.\\
For a positive answer one certainly requires a positive lower bound for the absolute values of differences of $\rho$-integers, considering hypothetical prototiles and taking the one with smallest length (compare also Corollary \ref{distnoclus}). 

To some extent, this necessary condition is also sufficient. We can show:
\begin{theorem}\label{thm:nondist}
    Let $P=X^N-\sum b_iX^{N-i}$ be an irreducible silver polynomial of degree $N$, and assume furthermore that the corresponding silver number, $\rho$, is a Pisot number. Then the following are equivalent.
    \begin{enumerate}
        \item There exists $L>0$ such that the $\rho$-integers form the endpoints of a tiling that is subordinate to a tiling with prototiles of lengths $L$, $L/\rho$,$\ldots$,$L/\rho^{N-1}$.
        \item The set of differences of $\rho$-integers does not have cluster point $0$.
    \end{enumerate}
\end{theorem}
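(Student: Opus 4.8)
The plan is to prove the two implications separately, the implication $(1)\Rightarrow(2)$ being elementary. Assume (1): there is a tiling $\mathcal T'$ with prototiles of lengths $L,L/\rho,\ldots,L/\rho^{N-1}$ of which every $\rho$-integer is an endpoint. The smallest prototile length is $L/\rho^{N-1}>0$, and two distinct endpoints of $\mathcal T'$ are always separated by at least one tile; hence any two distinct $\rho$-integers differ in absolute value by at least $L/\rho^{N-1}$, so $0$ is not a cluster point of the (symmetric) set of differences, which is (2).

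For $(2)\Rightarrow(1)$ I would first reduce to a statement about finitely many gaps. Write the $\rho$-integers as $0=x_0<x_1<\cdots$, let $S$ be this set and put $g_k=x_{k+1}-x_k$. For the upper bound on the gaps note that for $\rho\in(1,2)$ the greedy $\rho$-expansion has all digits in $\{0,1\}$, so for every $t\ge 0$ the greedy integer part $\lfloor t\rfloor_\rho$ is a $\rho$-integer lying in $(t-1,t]$; consequently $g_k\le 1$ for all $k$ and $S$ is relatively dense. For the lower bound I invoke hypothesis (2): $c:=\inf_k g_k>0$. (In fact $c>0$ is automatic here: each $g_k=\sum_i e_i\rho^i$ with $e_i\in\{-1,0,1\}$ is a nonzero algebraic integer of $\mathbb Z[\rho]$, whose norm is a nonzero rational integer by irreducibility, while its conjugates $g_k^{(j)}=\sum_i e_i\rho_j^i$ satisfy $|g_k^{(j)}|\le (1-|\rho_j|)^{-1}$ by the Pisot property, giving $g_k\ge\prod_{j\ge 2}(1-|\rho_j|)$.) The same conjugate bound, together with $g_k\in(0,1]$, confines the Minkowski image of each $g_k$ to a fixed bounded region of $\mathbb R^N$; since $\mathbb Z[\rho]$ is a lattice of rank $N$, only finitely many of its points lie there. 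Hence the gaps take only finitely many values $\gamma_1,\ldots,\gamma_p\in\mathbb Z[\rho]\cap(0,1]$, i.e. $S$ has finite local complexity.

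Next I would exploit self-similarity. By Remark \ref{multsigma} one has $\rho S\subseteq S$, so multiplying a gap $[x_k,x_{k+1}]$ by $\rho$ yields an interval with $\rho$-integer endpoints, subdivided by $S$ into consecutive gaps; this produces an inflation rule $\rho\gamma_i=\sum_{j=1}^p A_{ij}\gamma_j$ with a non-negative integer matrix $A$ whose Perron eigenvalue is $\rho$ and Perron eigenvector $(\gamma_1,\ldots,\gamma_p)^{\mathrm{tr}}$. It then suffices to find $L\in\mathbb Q(\rho)$, $L>0$, and non-negative integers $m_{ij}$ with
\[
\gamma_i=L\sum_{j=0}^{N-1}m_{ij}\,\rho^{-j},\qquad 1\le i\le p,
\]
where we use that $\rho$ is a unit (its minimal polynomial has constant term $-1$), so $\rho^{-1}\in\mathbb Z[\rho]$ and $\{\rho^{-j}\}_{0\le j\le N-1}$ is a $\mathbb Q$-basis of $\mathbb Q(\rho)$. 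Subdividing every occurrence of $\gamma_i$ into the $m_{ij}$ prototiles of length $L\rho^{-j}$ refines the gap tiling into one with the prescribed $N$ prototile lengths, of which every $\rho$-integer is an endpoint; this is (1). To keep the refinement compatible with inflation one wants the integer matrix $B=(m_{ij})$ to intertwine $A$ with the transposed companion matrix of $P$ associated with \eqref{Uforsilver}, whose $\rho$-eigenvector is exactly $(L\rho^{-j})_j$.

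The main obstacle is precisely the existence of $L$ and of the non-negative integer matrix $B$. A rational $L$ will not do in general: for the supergolden and the plastic number it was shown above that no rational $L$ admits such a decomposition, so $L$ must be a carefully chosen \emph{irrational} element of $\mathbb Q(\rho)$, and multiplication by $1/L$ must turn the a priori sign-indefinite $\{\rho^{-j}\}$-coordinates of all the $\gamma_i$ into non-negative integers simultaneously. This is where the Pisot hypothesis is indispensable: it places the gaps in the lattice $\mathbb Z[\rho]$ and, through the contracting conjugates, controls the self-similar dynamics of $A$ so that its Perron direction can be matched with that of the companion matrix. I expect the cleanest route to be constructive, namely to track how a single gap is cut by $S$ under repeated application of $\rho$ (equivalently, to deflate by $\rho^{-1}$) and to read off $L$ and the $m_{ij}$ from the resulting self-similar pattern; verifying that this process terminates in non-negative integer data, under the standing Pisot hypothesis, is the heart of the argument.
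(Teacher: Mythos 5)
Your direction $(1)\Rightarrow(2)$ is fine (the paper treats it as trivial), and several of your preliminary observations are sound: the uniform bound on the conjugates of a difference $g=\sum_i e_i\rho^i$, $e_i\in\{-1,0,1\}$, combined with $|N_{\mathbb Q(\rho)/\mathbb Q}(g)|\geq 1$, does give $|g|\geq\prod_{j\geq 2}(1-|\rho_j|)>0$; this is Garsia's separation lemma, and it in fact shows that hypothesis $(2)$ holds automatically for every Pisot silver number with irreducible silver polynomial --- a point the paper itself does not make (it presents $(2)$ as an unresolved alternative in its ``Dichotomy''). Your finite-local-complexity argument via the Minkowski embedding of $\mathbb Z[\rho]$ is also essentially correct, though note that your inflation matrix $A$ is not obviously well defined: $\rho S\subseteq S$ guarantees that $\rho[x_k,x_{k+1}]$ is subdivided by $S$ into gaps, but the resulting multiset of gap lengths could a priori depend on $k$ and not only on the length $\gamma_i=x_{k+1}-x_k$, so the substitution rule you write down requires justification.

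The decisive problem is that you stop exactly where the proof has to be carried out. You correctly identify that everything reduces to producing $L>0$ and non-negative integers $m_{ij}$ with $\gamma_i=L\sum_{j}m_{ij}\rho^{-j}$, observe that $L$ must in general be irrational, and then declare this existence question to be ``the heart of the argument'' which you ``expect'' a constructive deflation procedure to settle. That is precisely the content of the theorem, and it is the part the paper actually proves: writing $A=U^{\rm tr}$ for the transposed companion matrix, the paper seeks a non-negative integer vector $v_0$ such that for every $q\in\mathcal J$ with $q(\rho)\neq 0$ all entries of $q(A)v_0$ share the sign of $q(\rho)$; then $L$ is read off from $1=L(1,1/\rho,\dots,1/\rho^{N-1})v_0$ and $q(\rho)=L(1,1/\rho,\dots,1/\rho^{N-1})\,q(A)v_0$ gives the required one-signed decomposition. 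The existence of $v_0$ is where both hypotheses enter quantitatively: decomposing $\mathbb R^N=\mathbb Rw\oplus Z$ with $w$ the Perron eigenvector and $Z$ the $A$-invariant complement, the Pisot property yields $\Vert Az\Vert\leq\delta\Vert z\Vert$ on $Z$ with $\delta<1$, hence $\Vert q(A)z\Vert\leq\Vert z\Vert/(1-\delta)$ uniformly over $q\in\mathcal J$, while hypothesis $(2)$ supplies the uniform lower bound $\mu=\inf|q(\rho)|>0$ that makes the Perron component $q(\rho)\alpha w$ dominate; a rational point in the resulting open cone, cleared of denominators, is $v_0$. Without an argument of this kind (or a completed version of your deflation scheme with a termination proof), your proposal does not establish $(2)\Rightarrow(1)$.
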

\begin{proof} We need to prove the nontrivial implication 
$(2) \Rightarrow  (1)$ only. Setting

\[
{\mathcal J}:=\left\{ \sum \delta_iX^i\in \mathbb R[X];\quad \delta_i\in \{-1,0,1\}\right\},
\]
the differences of $\rho$-integers are given by the $q(\rho)$, with $q\in \mathcal J$. Our hypothesis then says that 
\begin{equation}\label{mudef}
\mu:={\rm inf}\,\left\{ |q(\rho)|;\, q\in \mathcal J\text{  and  } q(\rho)\not=0\right\}>0.
\end{equation}
We need to prove the existence of an $L>0$ such that for every $q\in\mathcal J$ there exist integers $m_1,\ldots, m_N$, {\em all of the same sign}, such that
\[
q(\rho)=\sum_{i=1}^N m_iL/\rho^{i-1}.
\]
\begin{enumerate}[(i)]
\item We first assume that such an $L>0$ exists and start with a representation of $1$ with non-negative integers $a_{j,0},$  $j = 0, \dots  , N-1,$ satisfying
\begin{equation} \label{equ0}
1 = L(a_{0,0} + \frac{a_{1,0}}{\rho} +  \frac{a_{2,0}}{\rho^2} + \dots  +
\frac{a_{N-1,0}}{\rho^{N-1}})
\end{equation} 
To obtain a representation for $\rho$ we multiply $(\ref{equ0})$
by $\rho$ and substitute 
$\rho = b_1 + \frac{b_2}{\rho^{1}} +  \dots + \frac{b_{N-1}}{\rho^{N-2}}+   \frac{b_N}{\rho^{N-1} }$ from the 
equation $P(\rho ) =0.$ We find
\begin{equation} \label{equ1}
\begin{array}{rcl}
\rho &=&L(a_{0,1} + \frac{a_{1,1}}{\rho} +  \frac{a_{2,1}}{\rho^2} + \dots  + \frac{a_{N-1,1}}{\rho^{N-1}}) \\
&=& L(a_{0,0}b_1 + a_{1,0} + \frac{a_{0,0}b_2 + a_{2,0}}{\rho} +  \frac{a_{0,0}b_3 + a_{3,0}}{\rho^2} + \dots  + 
 \frac{a_{0,0}b_N + a_{N-1,0}}{\rho^{N-1}})
 \end{array}
\end{equation} 
Since $P$ is irreducible, the set of powers $\rho^{-j}$, $0\leq j<N$, is linearly independent over $\mathbb Q$, hence one can compare coefficients at the powers $\rho^{-j}$ term by term. It is straightforward to verify that the vector of coefficients $v_1$  representing $\rho = \rho^1$ is related to the (original) vector of coefficients  
\[
v_0=(a_{0,0},\, a_{1,0},\,\ldots, \,a_{N-1,0})^{\rm tr}
\]
which represents $1 = \rho^0$ by multiplication with $U^{tr}$, where $U$ is the companion matrix of $P(X).$ 
Iterating multiplication  by $\rho$ and collecting terms one obtains for $A = U^{tr}$ all $k = 0,1, \dots :$
\begin{equation}\label{vrecursion}
v_{k}=A  = A^k\, v_0.
\end{equation}
Note that $\rho$ and $U$ and $A$ all have the same minimum polynomial, $P(X).$ Moreover, $U$ and $A$ have
the same characteristic polynomial and are both primitive (in view of the Pisot property and Theorem \ref{primmat}).
The strategy for the proof of the theorem will now be to find a non-negative integer vector $v_0$ such that all entries $q(A)v_0$ have the same sign as $q(\rho)$. Then one defines $L$ by equation $(\ref{equ0})$
and writes 
\begin{equation}
q(\rho)= L(1, 1/\rho, \dots , 1/ \rho^{N-1}) q(A)v_0,
\end{equation}
obtaining the required result.
\item  Since $A$ is primitive, by the last item of Corollary \ref{CortoPrim} we can decompose $\mathbb{R}^N =  \mathbb{R} w \oplus w^\perp$, where $w$ is a positive eigenvector for $\rho$ of length $1$ and  $Z = w^\perp$ is the kernel of the linear map given by $w^{tr}$ on $ \mathbb{R}^N.$ 
By the Pisot property there exists $0<\delta <1$ with
\[
\Vert Az\Vert \leq \delta \Vert z\Vert; \quad \text{all  }z\in Z.
\]
Now consider some 
\[
v=\alpha w + z;\quad \alpha\in \mathbb R, \quad z\in Z.
\]
Then for any $q\in\mathcal J$ we have 
\[
q(A)\, v=q(\rho)\alpha \,w + q(A)\,z.
\]
For all $q$ we have the estimate
\[
\Vert q(A)\,z\Vert\leq \sum_{k\geq 0}\Vert \delta^k A^k z\Vert\leq \sum_{k\geq 0}\delta^k\Vert z\Vert=\gamma \Vert z\Vert;\quad \gamma=1/(1-\delta).
\]
Now assume $\mu>0$ (see \eqref{mudef}). Let $\omega>0$ be the smallest entry of the eigenvector $w$, and  consider $v$ with $\alpha>\frac12$. Then for $q(\rho)\not=0$, every entry of $q(A)v$ is
\[
\begin{array}{rccl}
\geq &\alpha \mu \omega-\gamma \Vert z\Vert&\text{if}&q(\rho)>0;\\
\leq &-\alpha \mu \omega+\gamma \Vert z\Vert&\text{if}&q(\rho)<0.
\end{array}
\]
\item 
So, for all $\alpha>\frac12$ and all $z\in Z$ with $\Vert z\Vert <\frac12 \mu\omega/\gamma$, we have:
Whenever $v=\alpha w+z$, and $q\in \mathcal J$, $q(\rho)\not=0$, then all entries of $q(A)v$ have the same sign, which equals the sign of $q(\rho)$. Since the conditions on $v$ define an open subset of $\mathbb R^N$, there exists a rational vector $\widetilde v$ in this set. Taking an integer multiple $v_0$ of $\widetilde v$, we have found an integer vector such that all entries of $q(A)v^*$ have the same sign, as desired. Finally, $L$ can be obtained from \eqref{equ0} or any of the equations for $\rho^k.$
\end{enumerate}
\end{proof}

While the Theorem just above does not provide a definitive result for silver integers (except the distinguished ones, for which the existence of a tiling was shown directly), it provides a \\

{\bf Dichotomy}:
For a silver number satisfying the hypotheses of Theorem \ref{thm:nondist}, we have: Either the set of differences of pairs of $\rho$-integers has $0$ as cluster point, or the $\rho$-integers form endpoints of a $\rho$-inflationary tiling.

\begin{remark} The latter case would seem of particular interest in settings (e.g. the plastic number) when the inflation-substitution iteration does not yield a tiling with inflation multiplier $\rho$.
\end{remark}

\section{Appendix: Inflation-substitution tilings}\label{appsection}
Here  we collect notions and facts about substitution tilings of the real half-line, for easy reference and the readers' convenience. Apart from some (elementary, easy-access) constructions and the mode of presentation, the authors do not claim that this section contains original results.

Throughout we consider subsets of $\mathbb R$. Tilings of $\mathbb R$ and of the half-line $[0,\infty)$ have been thoroughly discussed in the literature from a dynamical systems perspective; we mention in particular Barge and Diamond \cite{BarDia}, and the references therein.\\
Some general concepts and notions for tilings may be simplified in the setting with intervals, and we will use this fact in our discussions.
\subsection{Definitions}
\begin{definition} \label{def:basicdef1d} 
\begin{enumerate}[1.]
    \item A {\em tile} in $\mathbb R$ is a non-trivial compact interval.

\item Two tiles are said to be  
essentially disjoint, if their intersection is either empty or consists of one point only.

\item  Two tiles are said to be {\em equivalent} if they have equal length, thus are translates of each other.
\end{enumerate}
\end{definition}
\begin{definition}
Given any closed interval $J$ with non-empty interior, and some index set $\mathcal{I}$, we say that a set $\mathcal T = \{T_j\}_{j \in \mathcal{I}}$ of tiles is a {\em tiling} (or {\em tesselation}) of $J$ if:
  \begin{enumerate}
  \item Any two tiles of $\mathcal{T}$ are essentially disjoint.
\item $J = \cup_{j \in \mathcal{I}} T_j.$
In particular, any tile is a subset of $J$.
\item There only occur finitely many lengths among the tiles in  $\mathcal{T}.$
  \end{enumerate}
  \end{definition}

  \begin{definition}
If $L_1, \dots L_M$ are all the pairwise different lengths of tiles appearing in a tesselation $\mathcal{T},$
then any choice {$\mathcal{R} = 
\{R_1, \dots, R_M\}$ of representatives} of the tiles of lengths $L_1,\ldots,L_M$ respectively is called a set of {\em prototiles}.
{In particular, any tile of a given tesselation is a translate of exactly one prototile in the chosen set $\mathcal{R}$.}
\end{definition}
  
\begin{remark} 
\begin{enumerate}
\item Any two sets of prototiles of a tiling are naturally in bijection.
\item A tiling of a compact interval $J$ necessarily consists of finitely many tiles.
\item Since tiles are non-trivial intervals, their location along the real line is uniquely determined.
Therefore, a tiling ${\mathcal T}$ of $[0,\infty)$ is equivalently characterized by a strictly increasing sequence $0=y_0<y_1<\cdots$ of real numbers such that $[0,\infty)$ is the essentially disjoint union of the intervals $S_i:=[y_{i-1},y_i]$, ${i = 1, 2, \dots}$. The $S_i$ are then the tiles of the tesselation, and we will call
the  points  {$y_{i-1},\,i = 1, 2, \dots$, the endpoints of the tiling. We also write $\mathcal T=(S_i)_{i\in \mathbb N}$.  }
\end{enumerate}
\end{remark}

\begin{definition}
\begin{enumerate}    
        \item We say that a tiling ${\mathcal T}'$ is {\em subordinate} to the tiling $\mathcal T$ if every endpoint of ${\mathcal T}'$ is also an endpoint of $\mathcal T$. 
    
\item {Subdividing a non-trivial compact subinterval $[a,b] \subset [0, \infty)$ by finitely many points $y_0 = a < y_1 <  \dots < y_l = b,$ we can define analogously to the above notion $[a,b]=(S_i)_{k\leq \leq \ell}$  with $S_i:=[y_{i-1},y_i], i = 1, 2, \dots l.$}
    \end{enumerate} 
\end{definition}
For later use we also note the 
\begin{definition} \label{defperiodic}
Using the notions introduced above, a tiling $(S_i)_{i\in \mathbb N}$ of $[0,\infty)$ is called {\em periodic} if there is an $m\in\mathbb N$ such that $S_i$ and $S_{i+m}$ are equivalent for all $i$. A tiling is called {\em ultimately (or eventually) periodic} if there exist $m\in\mathbb N$ and $j\in\mathbb N$ such that $S_i$ and $S_{i+m}$ are equivalent for all $i>j$.
\end{definition}
The following definitions are based on equivalent, or more general, notions in Resnikoff \cite{Res}, p.~6, and Barge and Diamond \cite{BarDia}, sections 1 and 2.
\begin{definition}\label{inflatess}
 Consider a set of tiles $R_1,\ldots,R_N$, 
of pairwise different lengths $L_1,\ldots,L_N$ respectively. Moreover let $\rho\in\mathbb R,\,\rho>1$.
\begin{enumerate}[1.]
    \item  This set of tiles is called {\em inflationary}, with {\em multiplier} $\rho$, if every set $\rho R_i$ is the union of essentially disjoint tiles, each of which is equivalent to some $R_j$.
    \item A tiling $(S_i)_{i\in \mathbb N}$ of $[0,\infty)$ with prototiles {$\mathcal{R} = \{R_1,\dots, R_N\}$} is called {\em inflationary} with multiplier $\rho$ (or, briefly, $\rho$-inflationary), if the following holds:
    \begin{enumerate}[(i)]
        \item For each $i\in\{1,\ldots,N\}$, there is a given order for the representation of $\rho R_i$ as essentially disjoint union of tiles equivalent to the $R_j$. We furthermore stipulate that this order is extended to all $\rho S_k$, $S_k$ equivalent to $R_i$.
 \item The tiling $(\widetilde S_i)_{i\in\mathbb N}$ of $[0,\infty) =
 (\rho S_i)_{i\in\mathbb N}$ which is obtained by substituting for each $\rho S_i$ the essentially disjoint union from item (i), is equal to $(S_i)_{i\in \mathbb N}$.
    \end{enumerate}
\end{enumerate}
\end{definition}

\begin{remark} \label{typeofinflation}
Item $(ii)$ just above fixes the notion of  ''inflationary tiling" in a very specific way. In particular, the first appearing prototile, call it w.l.o.g. $R_1$, is inflated necessarily of the form 
$\rho R_1 = R_1| \dots |R_\star$. 
However, at various places we are considering more general situations. See e.g. Remark \ref{aboutrhohochk},  Corollary \ref{convcor}, and Corollary \ref{divrem} for more details. 

\end{remark}
\begin{remark}
    If a set of prototiles is inflationary with multiplier $\rho$, then obviously it is also inflationary with any multiplier $\rho^m$, $m$ a positive integer. Likewise, if a tiling is inflationary with multiplier $\rho$, then it is also $\rho^m$-inflationary for all positive integers $m$. On the other hand, there exist examples (e.g. the plastic number $\theta$; see Example \ref{instileex}) of $\rho$-inflationary sets of prototiles that do not admit a $\rho$-inflationary tiling by the inflation-substitution procedure, since item $2.(ii)$ of the definition just above is not satisfied. But there 
    exists a $\rho^m$-inflationary tiling for some $m>1$. See 
    Corollary \ref{convcor} and Corollary \ref{divrem} for more details.
\end{remark}

\begin{remark}
\begin{enumerate}
     \item If the lengths $L_j$ 
     of the prototiles $R_j$ are linearly independent over the rationals, then the union in item (i) of the definition is unique up to the ordering of the prototiles, since the length of $\rho R_i$ is a (non-negative) integer linear combination of the $L_j$.
     In general, the choice of a partition matrix for an inflationary tiling makes the union in item (i) above unique up to ordering.
    \item From the definition above it is clear that $\rho = 1$ 
    does not produce any inflation and thus no inflationary tilings. 
     Clearly, the case $\rho < 1$ does not make sense in the case of inflationary tilings, since then one would need infinitely many different prototiles.
\end{enumerate}
\end{remark}

We extend the notion ``$\rho$-inflationary'' to sequences, as follows.
\begin{definition} Let $0=x_0<x_1<\cdots$ be a strictly increasing sequence, and $\rho>1$. Then we call the sequence {\em $\rho$-inflationary} if $\rho x_j\in\left\{x_i;\, i\in \mathbb N_0\right\}$ for all $i\geq 0$.
\end{definition}
\begin{remark}
    The endpoint sequence of a $\rho$-inflationary tiling is obviously $\rho$-inflationary. But clearly there are $\rho$-inflationary sequences (e.g. take $x_i=\rho^i$, $i>0$) that cannot be obtained from inflationary tilings according to Definition \ref{inflatess}. 
\end{remark}

In the following sections we will discuss the representation of tilings and the construction of inflationary tilings. 

\subsection{Representing tilings of intervals}\label{sec:1Dsequences}

We consider
primarily the interval $[0,\infty)$; but compact subintervals will be included into the discussion as well.
\subsubsection{\bf Indicator sequences}

\begin{remark}\label{indicatorrem}
\begin{enumerate}
    \item Let $\mathcal T$ be a tiling of $[0,\,\infty)$ together with a fixed choice  $\mathcal{R} = \{R_1,\ldots,R_N \}$  of prototiles, and moreover let $0=y_0<y_1<\cdots$ be the sequence of endpoints, with $S_i=[y_{i-1},\,y_i]$, $i\geq 1$. Then for each $i\geq 1$ there is a unique $j(i)\in\left\{1,\ldots,N\right\}$ such that $S_i$ is equivalent to $R_{j(i)}$.\\
   Therefore the tiling is uniquely characterized by the sequence $(R_{j(i)})_{i\in\mathbb N}$. 
   \item An analogous statement holds for tilings  of non-trivial compact intervals, corresponding to a finite sequence (also called a {\em finite string}) of prototiles.
  \item To put tilings of finite intervals $[0,\,a]$ on the same footing as  tilings of the positive real half-line, we introduce an additional ``prototile" $E\not\in \{R_1,\ldots, R_N\}$ (standing for ``empty space''), and complete the finite string $(R_{j(1)},\ldots R_{j(m)})$ which represents the tiling of $[0,\,a]$ to the infinite sequence 
  \[
  (R_{j(1)},\ldots R_{j(m)}, E,\ldots).
  \]
  Geometrically, one may think of finite tilings as incomplete tilings of $[0,\infty)$.
    \end{enumerate}
      \end{remark}

{\em From here on we will frequently represent tilings as infinite sequences of prototiles.}
 
\begin{definition}
We will call the  sequences constructed just above 
{\em indicator sequences} of the respective tilings.
\end{definition}

\subsubsection{\bf An ultra-metric on the space of tilings of $[0,\infty)$} 
 We consider tilings with a given set 
$\mathcal{R} = \{R_1,\ldots,R_N\}$ of prototiles, and their indicator sequences.
Generalizing the setting introduced 
in the previous subsection we will now consider the space 
$\mathcal{G}(R_1,R_2, \dots , R_N, E)$ of indicator sequences with entries
from $\{R_1, \dots R_N\} \cup \{E \} $.

\begin{definition} \label{ultrametric}
For two different
sequences  $\mathcal{X} = (X_k)_{k \in \mathbb N}$,  
$\mathcal{Y} = (Y_k)_{k \in \mathbb N} \in \mathcal{G}(R_1,R_2, \dots , R_N, E)$ 
we define their distance
\begin{equation}\label{def:dist}
    dist(\mathcal{X}, \mathcal{Y}) = 
dist( (X_k),(Y_k)):=2^{-\min\{j:\,X_j\not=Y_j\}}, 
\end{equation}
and furthermore we set $dist(\mathcal{X}, \mathcal{X}) =0$.
The definition implies directly that two different indicator sequences have a non-zero distance from each other.
Equivalently,  this means
$dist(\mathcal{X}, \mathcal{Y}) =0$ if and only if $\mathcal{X} = \mathcal{Y}.$ 
\end{definition}
Moreover, $dist(\mathcal{X}, \mathcal{Y}) =
dist(\mathcal{Y}, \mathcal{X})$ for all sequences . Finally, the distance function $dist$ satisfies the  strengthened triangle inequality
\begin{equation}
dist(\mathcal{X}, \mathcal{Y}) \leq 
\max\{dist(\mathcal{X}, \mathcal{Z}),dist(\mathcal{Z}, \mathcal{Y})\},
\end{equation}
for all sequences  $\mathcal{X}, \mathcal{Y} \in \mathcal{G}(R_1,R_2, \dots , R_N, E)$.  In view of this last property we obtain

\begin{proposition}
\begin{enumerate}[a)]
    \item The set $\mathcal{G}(R_1,R_2, \dots , R_N, E)$ of indicator sequences with entries in $\{R_1,\ldots,R_N,E\}$, together
with the  distance function defined  by \eqref{def:dist} is an ultra-metric space (see e.g. \cite{Semmes}). 
\item A sequence $\left(\mathcal X_\ell\right)_{\ell\in\mathbb N}$ in this space is a Cauchy sequence  if and only if for every positive integer $k$ there exists an index $\ell_k$ such that the first $k$ entries of all $\mathcal X_\ell$ with $\ell>\ell_k$ coincide.
\item Thus every Cauchy sequence is convergent, and the space is complete.
\end{enumerate}
\end{proposition}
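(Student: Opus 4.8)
The plan is to treat the three parts in order, leaning on the three facts about $dist$ recorded immediately before the statement: non-negativity together with $dist(\mathcal X,\mathcal Y)=0\iff \mathcal X=\mathcal Y$, symmetry, and the strengthened triangle inequality. For part (a) the only property that still needs a genuine argument is the ultrametric inequality, and I would prove it via the index of first disagreement. Given distinct sequences $\mathcal X,\mathcal Y$, set $m=\min\{j:\,X_j\neq Y_j\}$ and suppose toward a contradiction that both $dist(\mathcal X,\mathcal Z)<2^{-m}$ and $dist(\mathcal Z,\mathcal Y)<2^{-m}$. By \eqref{def:dist} the first inequality forces $X_j=Z_j$ for all $j\leq m$ and the second forces $Z_j=Y_j$ for all $j\leq m$; comparing at $j=m$ yields $X_m=Y_m$, contradicting the choice of $m$. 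Hence at least one of the two distances is $\geq 2^{-m}=dist(\mathcal X,\mathcal Y)$, which is exactly the claimed inequality. With definiteness, symmetry and this inequality in hand, $\mathcal G$ equipped with $dist$ satisfies all the axioms of an ultra-metric space.

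For part (b) I would simply unwind the definition of a Cauchy sequence in terms of the explicit values of $dist$. Since every nonzero distance has the form $2^{-j}$, it suffices to test the Cauchy condition against the thresholds $\varepsilon=2^{-k}$. By \eqref{def:dist}, the statement $dist(\mathcal X_\ell,\mathcal X_{\ell'})<2^{-k}$ is equivalent to $\mathcal X_\ell$ and $\mathcal X_{\ell'}$ agreeing in their first $k$ entries. Thus the standard Cauchy condition, namely that for each $k$ there is $\ell_k$ with $dist(\mathcal X_\ell,\mathcal X_{\ell'})<2^{-k}$ for all $\ell,\ell'>\ell_k$, is literally the stabilization condition stated in (b), so both implications follow at once.

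For part (c) I would construct the limit entrywise. Given a Cauchy sequence, apply (b): for each fixed $k$ and all $\ell>\ell_k$ the first $k$ entries no longer change, so for every position $j$ the $j$-th entry of $\mathcal X_\ell$ is eventually constant; call that constant $X_j$ and set $\mathcal X:=(X_j)_{j\in\mathbb N}$. The one point requiring care, and the only real obstacle, is to check that $\mathcal X$ genuinely lies in $\mathcal G$ rather than being an arbitrary string of symbols: for indicator sequences the occurrences of $E$ form a tail (an $E$ at position $j$ forces $E$ at every later position), and I would show this property is inherited by the entrywise limit, since $X_j=E$ means $(\mathcal X_\ell)_j=E$ for large $\ell$, whence $(\mathcal X_\ell)_{j'}=E$ for all $j'>j$ and therefore $X_{j'}=E$. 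Thus $\mathcal X\in\mathcal G$. Convergence $\mathcal X_\ell\to\mathcal X$ is then immediate from (b): for $\ell>\ell_k$ the sequences $\mathcal X_\ell$ and $\mathcal X$ agree on their first $k$ entries, so $dist(\mathcal X_\ell,\mathcal X)\leq 2^{-k}$. Hence every Cauchy sequence converges and $\mathcal G$ is complete.
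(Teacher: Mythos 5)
Your proof is correct and follows the standard route that the paper itself only sketches: the paper states the strengthened triangle inequality without proof and then asserts the proposition, whereas you supply the first-disagreement argument for the ultrametric inequality, the unwinding of the Cauchy condition at the thresholds $2^{-k}$, and the entrywise construction of the limit. Your additional check that the entrywise limit is a genuine indicator sequence (the occurrences of $E$ forming a tail) is the one non-routine point, which the paper glosses over entirely, and you handle it correctly.
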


\begin{remark}\label{seqrem}
By the correspondence between tilings and their indicator sequences we obtain a metric on the set of tilings. This gives a well-defined meaning to statements like ``a sequence of tilings of compact intervals converges to a tiling of $[0,\,\infty)$''. 
\end{remark}
\subsection{Partition matrices, inflationary tilings, and a brief survey on Perron-Frobenius Theory}
Our goal is to construct inflationary tilings of the half-line $[0,\infty)$, in subsection \ref{subsec:nonneg} below. In the present section we discuss the setting and recall basic properties of non-negative matrices which will be used throughout.  

\subsubsection{\bf From inflationary tilings to partition matrices}
Consider an inflationary  tiling  
$\mathcal{T}$ (see Definition \ref{inflatess}) with an inflationary set 
$\mathcal{R} = \{R_1,\ldots,R_N\}$ of prototiles and multiplier $\rho>1$. \\
Denote the length of $R_j$ by $L(R_j)$. Each $\rho \cdot R_j$ is an essentially disjoint union of tiles equivalent to suitable $R_j$.  Comparing lengths, one finds that there are non-negative integers $u_{ij}$ such that
\begin{equation}\label{partimat}
  \rho\cdot  L(R_i)=\sum_{j=1}^N u_{ij}L(R_j),\quad 1\leq i\leq N.
\end{equation} 
Thus the $N \times N-$ matrix $U:=\left(u_{ij}\right)$ is non-negative with integer entries, and $\rho$ is an eigenvalue of $U$ with eigenvector $\left(L(R_j)\right)_{1\leq j\leq N}$. Using terminology from Resnikoff \cite{Res} we call $U$ a {\em partition matrix} of the given set of prototiles.  
(Partition matrices also play a role in tilings of  higher dimensional spaces, at least  for ``book-keeping'' purposes by comparing volumes). \\
Equation \eqref{partimat} indicates the relevance of non-negative matrices for tilings of the half-line. For this reason, and last not least for easy reference, an overview of important notions and results in the theory of non-negative matrices is provided. As will be outlined in subsection \ref{subsec:nonneg}, in dimension one every irreducible non-negative matrix with integer entries and spectral radius $\rho>1$ gives 
rise to tilings.
\subsubsection{\bf Perron-Frobenius Theory for irreducible non-negative real matrices: Preparation}
The results listed in the rest of this subsection can be found e.g. in Meyer \cite{Meyer}, sections 8.2 and 8.3, or  
in Berman and Plemmons \cite{Berman}, specifically Chapter 2 (Thms.~1.4 and 1.7, Def.~1.8, Thm.~2.7, Thm.~2.20 and Cor.~2.28) and Chapter 8 and Chapter 11. Moreover references are listed explicitly, where used, if the above sources do not apply.
\begin{enumerate}[a)] 
\item  {\em Definition.}  We call a real $N\times N-$matrix {\em non-negative} if all its entries are $\geq 0$, and {\em positive} if all its entries are $>0$. In this situation we sometimes write $A\geq 0$ and  $A > 0$ respectively.
  \item {\em Irreducibility.}
    \begin{itemize}
        \item A non-negative $N\times N-$matrix $V$ is called {\em reducible}, if there exists a permutation matrix $P \in\mathbb R^{N\times N}$  such that $P^{-1}VP$ has lower or upper block triangular form.
Otherwise $V$ will be called {\em irreducible.} 
\item Since conjugation with a permutation matrix amounts to a permutation of the matrix entries, any
triangular block form of a non-negative matrix $V$ is again non-negative. Thus a more detailed discussion of reducible non-negative matrices can be done w.l.o.g. by considering non-negative lower block matrices.
\item With regard to the construction of inflationary tilings of $[0,\infty),$ one  sometimes focuses attention on irreducible partition matrices for the sake of simplicity, due to the following observation:
\begin{lemma}
Let $U$ be reducible, w.l.o.g. $U$ is of lower block triangular form, and denote by $U'$ its upper left block of size $N' \times N'$. Then the set 
$\mathcal{R}' = \{R_1,\ldots,R_{N'}\}$ of prototiles $R_1,\ldots,R_{N'}$ of the tiling $\mathcal{T}'$ constructed from $U'$ (see below) is inflationary with multiplier $\rho$.
\end{lemma}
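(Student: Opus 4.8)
The plan is to read the claim off directly from the block structure of the partition matrix, interpreting the matrix entries through their geometric meaning in equation \eqref{partimat}. First I would fix the lower block triangular form
\[
U=\begin{pmatrix} U' & 0 \\ B & C\end{pmatrix},
\]
in which $U'$ is the $N'\times N'$ upper left block and the upper right block vanishes. The key observation is that this vanishing block says precisely that $u_{ij}=0$ whenever $1\leq i\leq N'$ and $N'<j\leq N$.

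Next I would translate this back into the language of inflation. By the defining relation \eqref{partimat}, the integer $u_{ij}$ records how many tiles equivalent to $R_j$ occur in the essentially disjoint decomposition of $\rho R_i$ that is guaranteed by the inflationary property of $\mathcal T$. Hence, for every index $i$ with $1\leq i\leq N'$, that decomposition contains no tile equivalent to any $R_j$ with $j>N'$; every tile appearing is equivalent to one of $R_1,\ldots,R_{N'}$. In other words, the subfamily $\mathcal R'=\{R_1,\ldots,R_{N'}\}$ is closed under the inflation operation, which is exactly what Definition \ref{inflatess}, item~1, requires in order for $\mathcal R'$ to be inflationary with multiplier $\rho$. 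Restricting \eqref{partimat} to the range $1\leq i\leq N'$ simultaneously exhibits $\left(L(R_1),\ldots,L(R_{N'})\right)^{\rm tr}$ as a positive eigenvector of $U'$ for the eigenvalue $\rho$, so the inflation relation for $\mathcal R'$ is governed by $U'$ itself, as claimed.

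I do not expect a genuine obstacle here, since the statement is essentially a bookkeeping consequence of the block triangular shape. The only point that needs a little care is to keep the purely numerical eigenvalue/eigenvector statement separate from the geometric inflation statement: the vanishing upper right block yields the length identity immediately, but to conclude that $\mathcal R'$ is inflationary in the sense of Definition \ref{inflatess} one must invoke that the geometric decompositions of the $\rho R_i$ already exist (because $\mathcal T$ is inflationary) and merely note that, by the count $u_{ij}=0$ for $j>N'$, none of these decompositions reaches outside $\mathcal R'$. The explicit construction of the tiling $\mathcal T'$ from $U'$ is then deferred to the inflation-substitution procedure referenced below.
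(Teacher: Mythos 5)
Your argument is correct, and it is exactly the bookkeeping the paper has in mind: the paper states this lemma as an unproved ``observation,'' so your reading of the vanishing upper-right block as $u_{ij}=0$ for $i\leq N'$, $j>N'$, hence closure of $\{R_1,\ldots,R_{N'}\}$ under the already-existing decompositions of the $\rho R_i$, supplies precisely the intended justification. Your additional remark that the truncated length vector is a positive eigenvector of $U'$ for $\rho$ is also the right way to see that $U'$ itself governs the inflation of $\mathcal R'$.
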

\end{itemize}
\item {\em Definition.}\begin{enumerate}
\item  For any real $N \times N-$matrix $A$ the set of all (possibly complex) eigenvalues is called the spectrum of $A$ and denoted by $\sigma(A)$.
\item The spectral radius of A is  given by
\begin{equation}
\rho(A) =  \max \{ |\lambda|; \lambda \in \sigma(A) \}.
\end{equation}
\end{enumerate}
\item The first of the following results is common knowledge; the second is an easy consequence. 
\begin{lemma}\label{lem:nilpo}
a) A real matrix $A$ has spectral radius $\rho(A) <1$ if and only if $A^k    \xrightarrow[k \to \infty]{}   0.$\\
b) If, moreover, $A$ is non-negative with integer entries, then: $\rho(A) < 1 \Leftrightarrow  A$ is nilpotent.
\end{lemma}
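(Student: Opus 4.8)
The plan is to prove (a) by means of the Jordan canonical form and then to deduce (b) from (a) together with the integrality hypothesis.

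First I would establish (a). For the implication $\rho(A)<1 \Rightarrow A^k\to 0$, write $A=SJS^{-1}$ with $J$ in Jordan canonical form, so that $A^k=SJ^kS^{-1}$ and it suffices to show $J^k\to 0$. Since $J$ is block diagonal, I would reduce to a single Jordan block $J=\lambda I+N$ with $N$ the nilpotent shift; then $J^k=\sum_j\binom{k}{j}\lambda^{k-j}N^j$, so its $(p,q)$-entry (for $q\geq p$) is, up to the factor $\binom{k}{q-p}$, of the form $\binom{k}{q-p}\lambda^{k-(q-p)}$. As $|\lambda|\leq\rho(A)<1$, each such entry is a polynomial in $k$ times a geometrically decaying term, hence tends to $0$; thus $J^k\to 0$ and $A^k\to 0$. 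For the converse, suppose $A^k\to 0$ and let $\lambda\in\sigma(A)$ with eigenvector $v\neq 0$. Then $A^kv=\lambda^kv\to 0$ forces $\lambda^k\to 0$, so $|\lambda|<1$; since $\lambda$ was arbitrary, $\rho(A)<1$.

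Next I would prove (b). If $A$ is nilpotent, then $0$ is its only eigenvalue, so $\rho(A)=0<1$. For the converse, assume $A\geq 0$ has integer entries and $\rho(A)<1$. By (a), $A^k\to 0$ entrywise. Each power $A^k$ is again a non-negative integer matrix, since a product of such matrices is of the same type. Hence for each index pair $(i,j)$ there is an exponent $K_{ij}$ with $(A^k)_{ij}<1$, and therefore $(A^k)_{ij}=0$, for all $k\geq K_{ij}$. Taking $K=\max_{i,j}K_{ij}$, a finite maximum since $A$ is a finite matrix, yields $A^K=0$, so $A$ is nilpotent.

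The only mildly delicate point is the passage in (b) from entrywise convergence to actual vanishing: it is the integrality of the entries, together with the finiteness of the index set, that upgrades ``tends to $0$'' to ``is eventually $0$''. Everything else is standard linear algebra. As an alternative to the Jordan-form computation in (a) one could invoke Gelfand's formula $\rho(A)=\lim_k\|A^k\|^{1/k}$, or the existence, for each $\varepsilon>0$, of a submultiplicative norm with $\|A\|\leq\rho(A)+\varepsilon$; I would nonetheless keep the Jordan argument, since it is self-contained and elementary and fits the expository aims of the Appendix.
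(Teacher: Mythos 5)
Your proof is correct. Note that the paper itself gives no proof of this lemma: it is introduced with the remark that part (a) is ``common knowledge'' and part (b) ``an easy consequence,'' with only the blanket references to Meyer and Berman--Plemmons at the head of that subsection. Your Jordan-form argument for (a) and the integrality argument for (b) --- that entrywise convergence of non-negative integer matrices to $0$ forces each entry, and hence (by finiteness of the index set) the whole power $A^K$, to vanish eventually --- are exactly the standard arguments the authors are implicitly invoking, and you have correctly identified the one genuinely non-trivial point, namely the upgrade from ``tends to $0$'' to ``is eventually $0$.''
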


\end{enumerate}
\subsubsection{\bf Perron-Frobenius Theorem for irreducible non-negative real matrices} \label{PFgen}
\begin{theorem}\label{perfrothm}
Let $A$ be an irreducible non-negative real $N \times N-$ matrix. Then
\begin{enumerate}[a)] 
\item The spectral radius $\rho = \rho(A)$ of $A$ is a positive real eigenvalue (``Perron - Frobenius eigenvalue").
\item The eigenvalue $\rho$ is simple. In particular, the right and the left eigenspace associated with 
$\rho$ is one-dimensional.
\item The matrix  $A$ has for the eigenvalue $\rho$  both a right eigenvector $w$ and a left eigenvector $v$ 
that have only positive entries.
\item Every eigenvector of $A$ with only positive entries is associated with $\rho$.
\end{enumerate}   
\end{theorem}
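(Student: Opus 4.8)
The plan is to derive all four parts from the Collatz--Wielandt variational characterization of the spectral radius. The key device is the elementary fact that an irreducible non-negative $N\times N$ matrix $A$ satisfies $B:=(I+A)^{N-1}>0$; I would record this first, translating the definition of irreducibility into strong connectivity of the directed graph of $A$ and observing that the $(i,j)$ entry of $(I+A)^{N-1}$ is positive precisely when some path of length at most $N-1$ runs from $j$ to $i$. For a non-negative nonzero vector $x$ set
\[
r(x):=\min\{(Ax)_i/x_i : x_i>0\}=\max\{\lambda\ge0 : Ax\ge\lambda x\},
\]
which is homogeneous of degree zero and continuous on the cone of strictly positive vectors.

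For existence I would maximize $r$ over the compact set $S:=\{Bx : x\in\Delta\}$, where $\Delta$ is the standard simplex; since $B>0$, every element of $S$ is strictly positive, so $r$ is continuous on $S$ and attains a maximum. As $A$ and $B$ commute and $B\ge0$, the inequality $Ax\ge r(x)x$ gives $A(Bx)\ge r(x)(Bx)$, hence $r(Bx)\ge r(x)$; therefore $\rho^*:=\sup_{x\ge0,\,x\ne0}r(x)$ is attained at some $w\in S$ with $w>0$. If $Aw\ne\rho^*w$, then $Aw-\rho^*w\ge0$ is nonzero, so $B(Aw-\rho^*w)>0$, i.e.\ $A(Bw)>\rho^*(Bw)$ entrywise, whence $r(Bw)>\rho^*$, a contradiction; thus $Aw=\rho^*w$ with $w>0$. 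For any eigenvalue $\lambda$ with eigenvector $z$ one has $A|z|\ge|\lambda|\,|z|$ entrywise, so $|\lambda|\le r(|z|)\le\rho^*$, giving $\rho^*=\rho(A)$ and proving (a) together with the right-eigenvector half of (c). Applying the argument to $A^{\rm tr}$, again irreducible and non-negative, supplies a strictly positive left eigenvector $v$, finishing (c).

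Part (d) is then immediate: if $Az=\lambda z$ with $z>0$, pairing with $v$ yields $\lambda\,v^{\rm tr}z=v^{\rm tr}Az=\rho\,v^{\rm tr}z$, and $v^{\rm tr}z>0$ forces $\lambda=\rho$. For the geometric simplicity asserted in (b) I would show that every real $\rho$-eigenvector $w'$ is a multiple of $w$: choosing the largest $t$ with $w-tw'\ge0$, this vector has a vanishing coordinate but is again a $\rho$-eigenvector, so $B(w-tw')=(1+\rho)^{N-1}(w-tw')$ would be strictly positive unless $w-tw'=0$; hence both the right and left eigenspaces are one-dimensional.

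The main obstacle is the \emph{algebraic} simplicity of $\rho$, namely that it is a simple root of $p(\lambda):=\det(\lambda I-A)$ rather than merely geometrically simple. Here I would invoke Jacobi's formula $p'(\rho)=\operatorname{tr}\bigl(\operatorname{adj}(\rho I-A)\bigr)$. From $(\rho I-A)\operatorname{adj}(\rho I-A)=p(\rho)I=0$, every column of $\operatorname{adj}(\rho I-A)$ lies in the one-dimensional right eigenspace, and symmetrically every row lies in the left eigenspace, so $\operatorname{adj}(\rho I-A)=\gamma\,w\,v^{\rm tr}$ for a scalar $\gamma$. Since the geometric multiplicity is one we have $\operatorname{rank}(\rho I-A)=N-1$, so the adjugate is nonzero and $\gamma\ne0$; as $v^{\rm tr}w>0$, we conclude $p'(\rho)=\gamma\,v^{\rm tr}w\ne0$, so $\rho$ is a simple root and (b) follows.
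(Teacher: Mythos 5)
Your proof is correct, but note that the paper itself does not prove Theorem \ref{perfrothm} at all: it is stated as background in the Appendix with explicit pointers to Meyer (sections 8.2 and 8.3) and Berman--Plemmons, so there is no in-paper argument to compare against. What you supply is a complete, self-contained rendition of the classical Wielandt proof: the positivity of $(I+A)^{N-1}$, the Collatz--Wielandt function $r$ maximized over the compact positive set $B\Delta$ to dodge the discontinuity of $r$ on the boundary of the orthant, the extremal-vector argument for existence of a positive eigenvector, the comparison $A|z|\ge|\lambda|\,|z|$ to identify the maximizing value with the spectral radius, and the pairing with the left Perron vector for part (d). The treatment of algebraic simplicity via $\operatorname{adj}(\rho I-A)=\gamma\,w\,v^{\rm tr}$ and Jacobi's formula is a clean way to upgrade geometric to algebraic simplicity and is exactly what part (b) requires. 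Two cosmetic points you may wish to make explicit: in the geometric-simplicity step, the set $\{t:\,w-tw'\ge0\}$ is bounded above only if $w'$ has at least one positive entry, so one should first replace $w'$ by $-w'$ if necessary; and a complex $\rho$-eigenvector should be reduced to the real case by splitting into real and imaginary parts, each of which is then a real multiple of $w$. Neither affects the validity of the argument. Given that the rest of the paper (e.g.\ Theorem \ref{primmat}, Corollary \ref{CortoPrim}, and the non-periodicity proof of Theorem \ref{penrose}) leans on this statement as imported background, a self-contained proof of this kind would be consistent with the Appendix's stated aim of elementary accessibility.
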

\begin{definition}
Under the assumptions of the last theorem, any eigenvector for the Perron eigenvalue $\rho = \rho(A)$ of $A$ with only positive entries will be called a  ``Perron eigenvector".
\end{definition}
\subsubsection{\bf Perron-Frobenius Theorem for primitive matrices}
In the theory of non-negative matrices, a more special class of matrices $A \geq 0$ is very important.
We start by giving characterizations.
\begin{theorem}[{\bf Primitive Matrix Theorem}] \label{primmat}
The following statements for a non-negative real $N \times N-$matrix $A$ are equivalent.
\begin{enumerate}
\item  The matrix A is irreducible and the circle in the complex plane with center $0$ and radius $\rho = \rho(A)$ only contains the eigenvalue $\rho$ of $A$.
\item  The matrix $A$ is irreducible and for some $j \in \{1, \dots , N \}$ the greatest common divisor of all natural
 numbers $m$  such that $(A^m)_{jj} > 0$ is $1$ (see, e.g. \cite{Denardo} or \cite{Levi-Peres}, Lemma 1.6).
\item  There exists some $m>0$ such that $A^m$ has only positive entries.
\item The matrix A is irreducible and the limit $(\frac{1}{\rho}A)^k$ for $  \xrightarrow[k \to \infty]{} $ exists.
\end{enumerate}
\end{theorem}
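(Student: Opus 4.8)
The plan is to prove the four statements equivalent by a short cycle of implications, treating $(1)$, $(3)$, $(4)$ as the linear-algebraic core and folding in $(2)$ through statement $(3)$. Concretely, I would establish
$$(1)\Rightarrow(4)\Rightarrow(3)\Rightarrow(1),\qquad (3)\Rightarrow(2),\qquad (2)\Rightarrow(3),$$
which yields the full equivalence. Throughout I may invoke the irreducible Perron--Frobenius theorem (Theorem \ref{perfrothm}): under irreducibility the spectral radius $\rho$ is a \emph{simple} eigenvalue with strictly positive right and left eigenvectors $w$ and $v$. Note that statements $(1)$, $(2)$, $(4)$ build in irreducibility, while $(3)$ forces it, since a matrix with a strictly positive power has a strongly connected dependency graph.

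For the core I would argue as follows. For $(1)\Rightarrow(4)$: by Theorem \ref{perfrothm} the eigenvalue $\rho$ is simple, and $(1)$ says it is the only eigenvalue of modulus $\rho$; hence $\tfrac1\rho A$ has $1$ as a simple (thus semisimple) eigenvalue and all remaining eigenvalues strictly inside the unit disk, so its powers converge to the spectral projection onto the $1$-eigenspace. For $(4)\Rightarrow(3)$: if $(\tfrac1\rho A)^k$ converges, then any eigenvalue $\mu$ of $\tfrac1\rho A$ with $|\mu|=1$ must have $\mu^k$ convergent, which forces $\mu=1$; using simplicity of $\rho$ again, the limit is the rank-one projection $P=w\,v^{\mathrm{tr}}/(v^{\mathrm{tr}}w)$, which is \emph{strictly positive} because $w,v>0$. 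Since the entries of $(\tfrac1\rho A)^k$ converge to those of $P$, they are positive for large $k$, giving $A^k>0$. For $(3)\Rightarrow(1)$: from $A^m>0$ and irreducibility (no zero row or column) one gets $A^{m+1}>0$ as well; the classical Perron theorem for strictly positive matrices gives that each of $A^m,A^{m+1}$ has a \emph{unique} eigenvalue of maximal modulus, namely $\rho^m$ resp.\ $\rho^{m+1}$. Thus any eigenvalue $\lambda$ of $A$ with $|\lambda|=\rho$ satisfies $\lambda^m=\rho^m$ and $\lambda^{m+1}=\rho^{m+1}$; dividing gives $\lambda=\rho$, which is $(1)$.

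It remains to tie in $(2)$. The implication $(3)\Rightarrow(2)$ is immediate: having both $A^m>0$ and $A^{m+1}>0$ gives $(A^m)_{jj}>0$ and $(A^{m+1})_{jj}>0$ for every $j$, so the set $S_j=\{m:(A^m)_{jj}>0\}$ contains two consecutive integers and hence has greatest common divisor $1$. The reverse $(2)\Rightarrow(3)$ is where the real work lies. First one observes that $S_j$ is closed under addition, since $(A^{m+n})_{jj}\ge (A^m)_{jj}(A^n)_{jj}$; thus $S_j$ is a numerical semigroup, and a subsemigroup of $\mathbb N_0$ with $\gcd=1$ contains every sufficiently large integer (the Chicken McNugget / Schur bound), so $(A^m)_{jj}>0$ for all $m\ge m_0$. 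Using irreducibility to pick, for each ordered pair $(i,k)$, exponents $p,q$ with $(A^{p})_{ij}>0$ and $(A^{q})_{jk}>0$, one estimates $(A^{p+q+m})_{ik}\ge (A^{p})_{ij}(A^{m})_{jj}(A^{q})_{jk}>0$ for all large $m$; taking the finitely many pairs into account yields a single exponent $T$ with $A^T>0$, which is $(3)$. The main obstacle is precisely this last step: combining the purely number-theoretic semigroup fact with the graph-theoretic path-splicing so as to produce one \emph{uniform} power that is positive in every entry simultaneously. The linear-algebra implications, by contrast, reduce quickly to the simplicity and strict positivity already furnished by Theorem \ref{perfrothm}.
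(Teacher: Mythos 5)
Your proof is correct. Note, however, that the paper does not actually prove Theorem~\ref{primmat}: it is stated as background and referred wholesale to the literature (Meyer, Sections 8.2--8.3; Berman--Plemmons, Chapter 2; and, for statement (2), Denardo and Levin--Peres). So there is no in-paper argument to compare against; what you have written is essentially the standard textbook proof that those references contain. Your cycle $(1)\Rightarrow(4)\Rightarrow(3)\Rightarrow(1)$ correctly isolates the linear-algebraic content (simplicity of $\rho$ and positivity of the Perron projection $wv^{\mathrm{tr}}/(v^{\mathrm{tr}}w)$ from Theorem~\ref{perfrothm}, plus Perron's theorem for strictly positive matrices applied to $A^m$ and $A^{m+1}$), and you handle the two points where care is genuinely needed: deducing $A^{m+1}>0$ from the absence of zero rows in $A$, and, in $(2)\Rightarrow(3)$, combining the numerical-semigroup fact (a subsemigroup of $\mathbb{N}$ with $\gcd 1$ contains all sufficiently large integers) with path-splicing through the index $j$ and a maximum over the finitely many pairs $(i,k)$ to get one uniform positive power. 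The only cosmetic caveat is the degenerate case $N=1$, $A=0$, where $\rho=0$ and $\tfrac1\rho A$ is undefined; the paper works throughout with $N>1$ and irreducible $A$, so this does not affect anything. As a self-contained supplement to the paper's citation, your argument would be a valid addition.
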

\begin{definition}\label{def:primmat}
Any non-negative real matrix satisfying one (hence all) of the statements of the last theorem will be called a {\em primitive matrix}.
\end{definition}

\begin{remark}
    An obvious consequence of the theorem is that every primitive matrix is irreducible. 
    Of course, not every irreducible non-negative matrix is primitive. For example, consider any permutation matrix.
\end{remark}

\begin{corollary} \label{CortoPrim}
 Let $A$ be a non-negative real $N \times N-$matrix, then the following statements hold.
\begin{enumerate}
    \item  The matrix $A$ is primitive if and only if  there exists some $0 < m \leq  N^2 - 2N +2$ such that $A^m$ has only positive entries iff $A^{N^2 - 2N +2}$ has only positive entries
(Horn-Johnson \cite{HoJo}, Cor 8.5.9; Huppert \cite{Huppert}, p.372).
\item If $A$ is a non-negative irreducible matrix, then we have:
If the trace of $A$ is positive,  then $A$ is primitive.
\item If $A$ is primitive, then 
\begin{equation}\label{Alim}
 (\frac{1}{\rho}A)^k  \xrightarrow[k \to \infty]{}  \frac{wv^{\rm tr}}{v^{\rm tr} w} > 0,
 \end{equation}
 where $w$ is a right Perron vector and $v$ is a left Perron vector for the Perron eigenvalue $\rho$.
 \item If $A$ is primitive, then  
 $\mathbb{R}^N = \mathbb{R}w \oplus v^{\perp},$ where $v$ is a left eigenvector of $A$ for the eigenvalue $\rho$ and $A v^{\perp} \subset v^{\perp}$.
\end{enumerate}
\end{corollary}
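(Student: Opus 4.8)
The plan is to treat the four parts in turn, relying on the Primitive Matrix Theorem (Theorem \ref{primmat}) and the Perron--Frobenius Theorem (Theorem \ref{perfrothm}) just established. Parts (1) and (2) are short. For part (1) the substantive input is the Wielandt bound, which I would import from the cited sources (Horn--Johnson, Huppert): for primitive $A$ the least $m$ with $A^m>0$ is at most $(N-1)^2+1=N^2-2N+2$. The equivalences are then formal: if $A^m>0$ for some $m\le N^2-2N+2$ then $A$ is primitive by characterization (3) of Theorem \ref{primmat}; conversely primitivity produces such an $m$ by Wielandt; and since $A^m>0$ precludes a zero row in $A$, we get $A^{m+1}=A\,A^m>0$, so by iteration $A^{N^2-2N+2}>0$ whenever some earlier power is positive. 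For part (2), a positive trace gives $a_{jj}=(A^1)_{jj}>0$ for some $j$, so $1$ lies in the exponent set $\{m:(A^m)_{jj}>0\}$ and its gcd equals $1$; together with irreducibility, characterization (2) of Theorem \ref{primmat} yields primitivity at once.

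Part (3) is the substantial one, and the step I expect to be the main obstacle. Although characterization (4) of Theorem \ref{primmat} already guarantees that the limit of $B^k$, $B:=\rho^{-1}A$, exists, the point is to identify it, which I would do directly. Setting $P:=wv^{\rm tr}/(v^{\rm tr}w)$, I would first record the routine identities $P^2=P$ and $AP=\rho P$ (using $Aw=\rho w$ and that $v^{\rm tr}w$ is a nonzero scalar), so that $P$ is the projection onto the right eigenline $\mathbb R w$; moreover $P>0$, since $w$ and $v$ have strictly positive entries and $v^{\rm tr}w>0$ by Perron--Frobenius. To prove $B^k\to P$ I would pass to the complex Jordan decomposition $\mathbb C^N=\mathbb C w\oplus F$, where $F$ is the span of the generalized eigenspaces of the remaining eigenvalues. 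Here primitivity is decisive: characterization (1) of Theorem \ref{primmat} ensures that every such eigenvalue $\mu$ has $|\mu|<\rho$, whence $B|_F$ has spectral radius $<1$ and $B^k|_F\to 0$ by the criterion of Lemma \ref{lem:nilpo}(a), while $B$ acts as the identity on $\mathbb C w$. Thus $B^k$ tends to the projection onto $\mathbb C w$ along $F$, and it remains only to match this with $P$, that is, to show $F=\ker P=v^\perp$. For this, if $(A-\mu I)^ju=0$ with $\mu\neq\rho$, then applying $v^{\rm tr}$ and using $v^{\rm tr}(A-\mu I)=(\rho-\mu)v^{\rm tr}$ gives $(\rho-\mu)^j\,v^{\rm tr}u=0$, hence $v^{\rm tr}u=0$; so $F\subseteq v^\perp$, and the dimension count $\dim F=N-1=\dim v^\perp$ forces equality. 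This identifies the limit as $P$.

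For part (4), the invariance $A\,v^\perp\subseteq v^\perp$ is immediate, since $v^{\rm tr}x=0$ implies $v^{\rm tr}(Ax)=(v^{\rm tr}A)x=\rho\,v^{\rm tr}x=0$. Because $v^{\rm tr}w>0$ we have $w\notin v^\perp$, so $\mathbb R w\cap v^\perp=\{0\}$, and comparing dimensions ($1+(N-1)=N$) gives the direct sum $\mathbb R^N=\mathbb R w\oplus v^\perp$. This is precisely the real shadow of the splitting used in part (3), and it requires only that $\rho$ be a simple eigenvalue with strictly positive left and right eigenvectors.
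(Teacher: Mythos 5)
Your proposal is correct, but it is considerably more self-contained than what the paper actually does: the paper offers no proof of parts (1)--(3) beyond citations (Horn--Johnson and Huppert for the Wielandt bound in (1), and the blanket references to Meyer and Berman--Plemmons at the head of the subsection), and for part (4) it gives only the one-line remark that the statement ``follows from \eqref{Alim} by an application to $Az$, $z\in v^\perp$,'' i.e.\ it deduces (4) from (3). You instead prove (3) from scratch via the Jordan splitting $\mathbb{C}^N=\mathbb{C}w\oplus F$, identifying $F$ with $v^\perp$ by hitting generalized eigenvectors with $v^{\rm tr}$ and counting dimensions, and using characterization (1) of Theorem \ref{primmat} plus Lemma \ref{lem:nilpo} to kill $B^k|_F$; and you prove (4) independently of (3), using only simplicity of $\rho$ and $v^{\rm tr}w>0$. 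What your route buys is a genuine proof of the limit formula \eqref{Alim} rather than a citation, and a logically lighter part (4) that does not depend on the convergence statement; what it costs is nothing beyond the Wielandt bound, which you import exactly where the paper does. All the individual steps check out: the zero-row argument propagating positivity of powers in (1), the gcd-equals-one argument in (2), the identities $P^2=P$, $AP=\rho P$, $\ker P=v^\perp$ in (3), and the dimension counts in (3) and (4) are all sound.
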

The last statement above follows from $(\ref{Alim})$ above
 by an application to $Az, z \in v^\perp$ . 
 Moreover, for a primitive  companion matrix $A$ the left eigenvector$v$ is a positive vector. Therefore the limit in (\ref{Alim}) 
is a positive projection.

If the hypothesis in the Perron-Frobenius Theorem is strengthened to assuming that $A$ is primitive, then we obtain (repeating some facts stated earlier):
\begin{theorem}[{\bf Perron-Frobenius Theorem for primitive matrices}]\label{primmatthm}
Assume that $A$ is a primitive non-negative $N \times N-$matrix. Then $A$ is irreducible and the Perron-Frobenius Theorem \ref{perfrothm} holds. In addition, the unique positive real Perron-Frobenius eigenvalue $\rho = \rho(A)$ for $A$ exceeds in magnitude all other 
(real or  complex) eigenvalues of $A$.
\end{theorem}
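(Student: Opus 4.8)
The plan is to recognize that, for a primitive matrix $A$, irreducibility together with the entire content of the Perron--Frobenius Theorem \ref{perfrothm} come essentially for free, so that the only genuinely new assertion is the \emph{strict} dominance of the eigenvalue $\rho=\rho(A)$ over all other eigenvalues in modulus. I would therefore split the argument into the trivial part (irreducibility plus a verbatim appeal to Theorem \ref{perfrothm}) and the substantive part (strict dominance).

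First I would note that $A$ is irreducible: by characterization (3) of Theorem \ref{primmat} some power $A^m$ is strictly positive, and a matrix possessing a strictly positive power cannot be conjugated by a permutation matrix into block-triangular form, so $A$ is irreducible. Consequently Theorem \ref{perfrothm} applies without change, yielding that $\rho=\rho(A)$ is a simple positive eigenvalue admitting positive right and left eigenvectors $w$ and $v$.

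For the strict dominance I would argue as follows. By the definition of the spectral radius, every $\lambda\in\sigma(A)$ already satisfies $|\lambda|\le\rho$, so it suffices to exclude the possibility $|\lambda|=\rho$ for $\lambda\neq\rho$. Here I would invoke characterization (1) of primitivity in Theorem \ref{primmat}, which states precisely that the circle $\{z\in\C:|z|=\rho\}$ meets $\sigma(A)$ only in the point $\rho$. Thus no eigenvalue other than $\rho$ can lie on that circle, whence $|\lambda|<\rho$ for all $\lambda\neq\rho$, which is the claim.

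If one prefers a self-contained argument not quoting characterization (1), I would instead use the convergence of $(\rho^{-1}A)^k$ guaranteed by characterization (4) of Theorem \ref{primmat} (equivalently, the explicit limit \eqref{Alim} in Corollary \ref{CortoPrim}). Passing to the complexification, any eigenvalue $\lambda\neq\rho$ with $|\lambda|=\rho$ would make $\rho^{-1}\lambda$ a unit-modulus eigenvalue of $\rho^{-1}A$ distinct from $1$; then on a corresponding (generalized) eigenvector the powers $(\rho^{-1}A)^k$ behave like $(\rho^{-1}\lambda)^k$, which does not converge, contradicting the convergence of the matrix powers. The only point requiring mild care --- and the sole place where the argument is not a pure restatement of the definitions --- is this complex-eigenvalue bookkeeping; everything else is immediate from the characterizations already established.
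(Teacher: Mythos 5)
Your proposal is correct and matches the paper's treatment: the paper states this theorem without a separate proof, prefacing it with ``repeating some facts stated earlier'' and relying on exactly the characterizations in Theorem \ref{primmat} (irreducibility via a positive power, strict spectral dominance via characterization (1)) together with Theorem \ref{perfrothm}, which is precisely how you assemble it. Your optional self-contained argument via the convergence of $(\rho^{-1}A)^k$ is also sound, since a unit-modulus eigenvalue of $\rho^{-1}A$ other than $1$ would indeed obstruct convergence of the powers.
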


\begin{corollary}\label{limcor} If $A$ is a primitive non-negative matrix with spectral radius $\rho$, then the eigenspace for eigenvalue $\rho$ is spanned by some positive vector $w$, and for every $z\in\mathbb R^n$ the sequence $\left((\dfrac1\rho A)^k z\right)$ converges to a multiple of $w$. \end{corollary}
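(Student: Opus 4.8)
The plan is to deduce Corollary \ref{limcor} directly from the primitivity of $A$ via the limit formula \eqref{Alim} in Corollary \ref{CortoPrim}, item (3). First I would invoke Theorem \ref{primmatthm} (or equivalently Theorem \ref{perfrothm} together with primitivity) to guarantee that the Perron eigenvalue $\rho=\rho(A)$ is a simple, positive real eigenvalue strictly dominating all others in modulus, and that its eigenspace is spanned by a positive vector $w$. This settles the first assertion of the corollary immediately.

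For the convergence statement, the key is that Corollary \ref{CortoPrim}(3) already provides
\[
\left(\tfrac{1}{\rho}A\right)^k \xrightarrow[k\to\infty]{} \frac{wv^{\mathrm{tr}}}{v^{\mathrm{tr}}w},
\]
where $v$ is a left Perron vector. Given any $z\in\mathbb R^n$, I would simply apply this matrix limit to $z$, obtaining
\[
\left(\tfrac{1}{\rho}A\right)^k z \xrightarrow[k\to\infty]{} \frac{wv^{\mathrm{tr}}z}{v^{\mathrm{tr}}w} = \frac{v^{\mathrm{tr}}z}{v^{\mathrm{tr}}w}\,w,
\]
using that $v^{\mathrm{tr}}z$ is a scalar. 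Since the right-hand side is the scalar $v^{\mathrm{tr}}z/(v^{\mathrm{tr}}w)$ times $w$, the limit is a multiple of $w$, which is exactly the claim. The only point requiring a word of care is that matrix convergence entry-wise implies convergence of the image of a fixed vector $z$, which is routine since matrix-vector multiplication is continuous (each entry of the product is a finite linear combination of the matrix entries).

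Thus the corollary is essentially a corollary of a corollary: the substantive content is already packaged in \eqref{Alim}, and the remaining work is only the elementary observation that a convergent sequence of matrices applied to a fixed vector converges to the limit matrix applied to that vector, together with the factorization of the rank-one limit $wv^{\mathrm{tr}}/(v^{\mathrm{tr}}w)$ into $w$ times a scalar depending on $z$. I do not anticipate a genuine obstacle here; the main thing to get right is the bookkeeping that the scalar factor $v^{\mathrm{tr}}z/(v^{\mathrm{tr}}w)$ is well-defined, which follows because $v$ and $w$ are both positive vectors (as $A$ is primitive), so $v^{\mathrm{tr}}w>0$ and the denominator never vanishes.
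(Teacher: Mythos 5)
Your proposal is correct and follows essentially the same route as the paper: the paper's proof likewise applies the limit formula \eqref{Alim} from Corollary \ref{CortoPrim}(3) to the fixed vector $z$ and reads off the limit as $\frac{v^{\rm tr}z}{v^{\rm tr}w}\,w$, a multiple of $w$. Your additional remarks on continuity of matrix--vector multiplication and positivity of $v^{\rm tr}w$ are routine and consistent with what the paper leaves implicit.
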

\begin{proof}
    This follows immediately from equation \eqref{Alim}, which implies
    \[
    \lim (\dfrac1\rho A)^k\,z= \frac{v^{\rm tr}z}{v^{\rm tr} w}\,w.
    \]
\end{proof}

\begin{remark}
    Assume that $A$ is primitive with $\rho=1$. Then $A$ is conjugate to an irreducible {column-}stochastic  matrix (see e.g. Huppert \cite{Huppert}, Section IV,4): Putting  $\mathbb{1} = (1,\dots ,1)^{tr}$ and choosing $v^{tr}$ as above, we write
    $v^{\rm tr}=\mathbb{1}^{tr}D$ with a diagonal matrix $D$ 
    and find
    $\mathbb{1}^{tr}D=v^{\rm tr}=v^{\rm tr}\,A =\mathbb{1}^{tr}\,D\,A,$
     whence  $\mathbb{1}^{tr}\,D\,A\,D^{-1}=\mathbb{1}^{tr}$ follows.
\end{remark}



\subsection{Constructing inflationary tilings from non-negative integer matrices}\label{subsec:nonneg}
In this subsection we consider exclusively  an irreducible non-negative matrix 
$U= \left(u_{ij}\right)\in \mathbb R^{N\times N}$, $N>1$, with integer entries, and  spectral radius $\rho>1$. We note that the latter property is automatic in the primitive case:
\begin{lemma} Let $U$ be a primitive non-negative $N \times N-$ integer matrix. Then the
    spectral radius of $U$ satisfies $\rho>1$.
\end{lemma}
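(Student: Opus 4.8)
The plan is to combine two facts established earlier in this subsection: the nilpotency criterion for the spectral radius of a non-negative integer matrix (Lemma \ref{lem:nilpo}) and the convergence of the normalized powers to the Perron projection for primitive matrices (Corollary \ref{CortoPrim}). Since $U$ is primitive it is irreducible, so Theorem \ref{perfrothm} applies and $\rho=\rho(U)$ is a positive real eigenvalue. It therefore suffices to rule out the two remaining possibilities $\rho<1$ and $\rho=1$ separately.

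First I would dispose of $\rho<1$. By Theorem \ref{perfrothm}(a) we have $\rho>0$, so $U$ cannot be nilpotent (a nilpotent matrix has only the eigenvalue $0$, hence spectral radius $0$). By Lemma \ref{lem:nilpo}(b), a non-negative integer matrix satisfies $\rho<1$ if and only if it is nilpotent; consequently $\rho\geq 1$.

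The substantive step is to exclude the boundary case $\rho=1$. Assuming $\rho=1$, Corollary \ref{CortoPrim}(3) gives
\[
U^{k}=\Bigl(\tfrac{1}{\rho}U\Bigr)^{k}\xrightarrow[k\to\infty]{}\; P:=\frac{w\,v^{\mathrm{tr}}}{v^{\mathrm{tr}}w}>0,
\]
where $w$ and $v$ are positive right and left Perron vectors, so $v^{\mathrm{tr}}w>0$. Here I would exploit the integrality of $U$: each $U^{k}$ has integer entries, and an entrywise convergent sequence of integer matrices must be eventually constant; hence $P$ is itself a positive integer matrix and $U^{k}=P$ for all large $k$. For such $k$ one has $P^{2}=U^{2k}=P$, so $P$ is idempotent, while $P=w\,v^{\mathrm{tr}}/(v^{\mathrm{tr}}w)$ is visibly of rank one. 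A rank-one idempotent has trace $1$; indeed $\operatorname{tr}(P)=v^{\mathrm{tr}}w/(v^{\mathrm{tr}}w)=1$. On the other hand $P$ is a positive integer matrix, so each of its $N$ diagonal entries is at least $1$ and $\operatorname{tr}(P)\geq N\geq 2$. This contradiction forces $\rho>1$.

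The main obstacle is precisely the boundary case $\rho=1$: ruling out $\rho<1$ is immediate from the nilpotency criterion, but $\rho=1$ is where all hypotheses are genuinely needed. Without integrality a primitive matrix may have $\rho=1$ (for instance the positive matrix all of whose entries equal $\tfrac12$); without primitivity an irreducible integer matrix may have $\rho=1$ (a cyclic permutation matrix); and without $N>1$ the $1\times1$ matrix $[1]$ is a counterexample. The argument must therefore marry all three ingredients: primitivity forces $U^{k}$ to converge to the positive rank-one projection $P$, integrality pins $P$ down as an integer matrix, and $N\geq 2$ makes its trace exceed the value $1$ dictated by rank-one idempotency.
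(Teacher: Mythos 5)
Your proof is correct, but it takes a genuinely different route from the paper's. The paper also first invokes Lemma \ref{lem:nilpo} to get $\rho\geq 1$, but then handles the case $\rho=1$ purely through the eigenvector equation: writing $\sum_j u_{ij}v_j=v_i$ for a positive Perron vector $v$ and looking at the rows indexed by the minimal entries of $v$, it concludes that either $U$ is reducible (if $v$ has entries of different sizes) or $U$ is a permutation matrix (if all entries of $v$ coincide), contradicting primitivity in both cases. That argument needs only the existence of a positive eigenvector from Theorem \ref{perfrothm}. Your argument instead leans on the heavier convergence statement of Corollary \ref{CortoPrim}(3), and the observation that a convergent sequence of integer matrices is eventually constant is a nice use of integrality; the resulting trace contradiction ($\operatorname{tr}(wv^{\mathrm{tr}}/(v^{\mathrm{tr}}w))=1$ versus $\operatorname{tr}\geq N\geq 2$ for a positive integer matrix) is clean and makes visibly explicit where primitivity, integrality, and $N>1$ each enter --- your three counterexamples confirm that all are needed. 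One small point: the lemma as stated does not repeat the hypothesis $N>1$, but it is a standing assumption of the subsection, and note that the paper's own proof needs it just as much as yours does (the $1\times 1$ matrix $[1]$ is a primitive permutation matrix, so the paper's final contradiction would also fail for $N=1$). Both proofs are valid; the paper's is the more elementary and self-contained, yours trades that for brevity at the final step.
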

\begin{proof}
    By Lemma \ref{lem:nilpo} the spectral radius is $\geq 1$.  Knowing that the spectral radius is also a real eigenvalue of $U$, we suppose now that this  eigenvalue is $1$. Then by the Perron-Frobenius Theorem there exists a positive eigenvector $v$ for this eigenvalue $1$, whence
    \[
\sum_{j = 1}^N u_{ij}v_j= v_i;\quad 1\leq i\leq N.
    \]
    Assume with no loss of generality that $v_1=\cdots= v_r$ are the smallest entries of $v$. Then, for $r<N$, the eigenvector condition shows that $u_{ij}=0$ for all $i\in \left\{1,\ldots,r\right\}$ and all $j\in \left\{r+1,\ldots,N\right\}$, and $U$ is reducible; a contradiction. In the remaining case, $r=N$, the matrix $U$ 
    is a permutation matrix; a contradiction to $U$ being primitive.
    \end{proof}

By the Perron-Frobenius theorem, the eigenspace for $\rho$ is one-dimensional and spanned by a vector 
\[
w=\begin{pmatrix}
    w_1\\ \vdots\\ w_N
\end{pmatrix}, \text{  with all } w_i>0,
\]
which may (but need not) be normalized by requiring $\sum w_i=1$.
We fix some ``basic length'' $L_0>0$ and give the name ``prototiles'' to the  intervals
$R_i:=[0,\,L_0 w_i]$.
With these definitions one recovers the relations in \eqref{partimat} of section 6.3; in particular the $R_i$ form a $\rho$-inflationary set of prototiles.\\
In view of Remark \ref{indicatorrem}, indicator sequences built with those prototiles (possibly augmented by $E$) define a tiling of the half-line, or compact intervals $[0,a]$, respectively.

Essentially the following step-by-step approach is a variant of an abstract procedure (``substitution of words") known from a more general context; see e.g.\ Barge and Diamond \cite{BarDia}, Section 2.
We will introduce some more notation, to distinguish clearly between various objects and operations. 

\begin{definition}
We consider the prototiles $R_i$, $1\leq i \leq N$.
\begin{itemize}
 \item    Given the infinite indicator sequence $\mathcal{X}$ of a finite tiling of some compact interval $[0,a]$, 
 we define its {\em cutoff} ${\rm cut}(X)$ to be the finite string which is obtained from discarding all the ``$E$'' entries.
 \item   Conversely, given a finite string $\mathcal S$, of length $k$, with entries in $\{R_1,\ldots,R_N\}$, we call its {\em completion} ${\rm comp}(\mathcal S)$ the sequence with the first $k$ entries identical to those of $\mathcal S$, and all remaining entries equal to $E$.
\item    For two strings $\mathcal S_1,\,\mathcal S_2$ of length $k_1,\,k_2$ respectively, their {\em join} $\mathcal S_1|\mathcal S_2$ is defined as the string of length $k_1+k_2$ with the first $k_1$ entries identical to those of $\mathcal S_1$, and the remaining ones identical to those of $\mathcal S_2$.
\end{itemize}
\end{definition}
\begin{definition}\label{def:ins} {\bf Inflation and substitution:}
Let $U$  be non-negative $N \times N$ matrix having spectral radius $\rho>1$, with associated
prototiles
$R_1, \ldots,R_N$
from the eigenvector $w$.
We consider the following procedure:
\begin{enumerate}[1.]
    \item {\bf Fixing a decomposition of inflated tiles: }For each $R_i$, $1\leq i\leq N$, define $v_i:=\sum_j u_{ij}$ and a finite string 
    \begin{equation}\label{insdecomp}
        {\rm ins}\,(R_i):=\left(S_{i,1},\ldots,S_{i,v_i}\right), \quad S_{i,k}\in\left\{R_1,\ldots,R_N\right\},
    \end{equation}
    such that
    \begin{equation}
        \#\left\{k:\,S_{i,k} = R_j\right\} = u_{ij},\quad 1\leq j\leq N.
    \end{equation}
    \item {\bf For each finite string $(R_{j_1},\ldots,R_{j_\ell})$ define}  ''ins"
    \begin{equation}
        {\rm ins}\,((R_{j_1},\ldots,R_{j_\ell})):=\left({\rm ins}\,(R_{j_1)}|\ldots|{\rm ins}\,(R_{j_\ell})\right).
    \end{equation}
    \item  The inflation and substitution procedure yields in the first step for the lengths $L(R_j)$ of the tiles 
     $R_1, \dots ,R_N$ the relation
     \begin{equation} \label{lengths}
     \rho L( R_k) = \sum_1^N u_{kj} L(R_j),
     \end{equation}
     where $\rho$ denotes the spectral radius of U.
    \item {\bf Defining ${\rm ins}\,({\mathcal X})$ for an infinite indicator sequence $\mathcal{X}$:}
    Represent $\mathcal X$ as a limit of completions of finite strings $\mathcal T_j$, and take the limit of the completions of the ${\rm ins}\,({\mathcal T_j})$. (One checks easily that this is well-defined.)
    \end{enumerate}
      We call ${\rm ins}$ an {\em inflation and substitution map}.
    \end{definition}
    
    \begin{remark}\label{instok}
        Obviously, for any $k>1$ one may consider ${\rm ins}^k$ as an inflation and substitution map in its own right, with matrix $U^k$ and substitution rules from ${\rm ins}^k(R_i)$.
    \end{remark}
    
We use these steps to construct $\rho$-inflationary tilings: Start with a finite string (``initial string'') of prototiles, and iterate the inflation-substitution procedure, to obtain a sequence of indicator sequences. If this sequence converges, then the limit is the indicator sequence of a $\rho$-inflationary tiling.
To discuss convergence, we note 
an obvious, but useful property:
\begin{lemma}\label{joinlem}
Let $\mathcal S$ and $\mathcal T$ be strings. Then
\[
{\rm ins}\,\left(\mathcal S|\mathcal T\right)={\rm ins}\,(\mathcal S)|{\rm ins}\,(\mathcal T).
\]
\end{lemma}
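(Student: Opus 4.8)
The plan is to unfold the two definitions involved---the join operation $\mathcal{S}_1|\mathcal{S}_2$ and the string-level map ${\rm ins}$ from Definition \ref{def:ins}---and to observe that the asserted identity is a purely formal consequence of the associativity of concatenation. In essence the lemma says that ${\rm ins}$ is a homomorphism of the free monoid of finite strings (under the join) into itself, so the proof is bookkeeping rather than genuine mathematics.

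First I would write the two finite strings explicitly as $\mathcal{S}=(R_{j_1},\ldots,R_{j_k})$ and $\mathcal{T}=(R_{i_1},\ldots,R_{i_m})$. By the definition of the join, $\mathcal{S}|\mathcal{T}$ is the single string $(R_{j_1},\ldots,R_{j_k},R_{i_1},\ldots,R_{i_m})$ of length $k+m$. Applying item~2 of Definition \ref{def:ins} to this concatenated string yields
\[
{\rm ins}(\mathcal{S}|\mathcal{T})={\rm ins}(R_{j_1})|\cdots|{\rm ins}(R_{j_k})|{\rm ins}(R_{i_1})|\cdots|{\rm ins}(R_{i_m}).
\]
On the other hand, applying the same definition separately to $\mathcal{S}$ and to $\mathcal{T}$ and then joining the two results produces exactly the same sequence of join-factors, in the same order. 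Hence the two sides agree. The only fact being used is that the join, being concatenation of finite sequences, is associative, so that the placement of parentheses (implicit already in the unparenthesized $\ell$-fold join of Definition \ref{def:ins}) is immaterial; if one prefers, this can be phrased as a one-line induction on the length $k$ of $\mathcal{S}$, peeling off the first letter at each step.

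The main obstacle, such as it is, is to confirm that the associativity of the join is legitimate at the level of the formal object defined in the text and to dispose of the trivial edge cases (an empty factor, or a prototile whose ${\rm ins}$-image is a single-letter string). If one later wishes to apply the identity when $\mathcal{T}$, or $\mathcal{S}|\mathcal{T}$, is an infinite indicator sequence rather than a finite string---as happens in the convergence analysis of the inflation-substitution iteration---the extra ingredient is the limit definition of ${\rm ins}$ on infinite sequences (Definition \ref{def:ins}, item~4) together with continuity of ${\rm ins}$ in the ultrametric of Definition \ref{ultrametric}; since the first finitely many join-factors of both sides stabilize after finitely many steps, the two limits coincide. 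For the statement exactly as written, involving only finite strings, no limiting argument is required and the identity is immediate.
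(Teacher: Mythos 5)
Your proof is correct; the paper states this lemma without proof, introducing it as ``an obvious, but useful property,'' and your unfolding of Definition \ref{def:ins} together with associativity of the join is precisely the intended (and essentially only) justification. Your closing remark about extending the identity to infinite indicator sequences via the ultrametric is a sensible extra observation but is not needed for the statement as written.
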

\begin{definition}\label{def:inscon}
     { \bf Recursive construction of tilings:} 
    Given the setting and the notation of Definition \ref{def:ins}, we construct a sequence of indicator sequences as follows.
    \begin{enumerate}[1.]
    \item {\em Choice of initial string:} Given a (finite) string $\mathcal S_1$, define
    \[
    \mathcal{X}_1:={\rm comp}(\mathcal S_1).
    \]
    \item {\em Inflation and substitution:} Given $\mathcal X_\ell$, set
    \[
    \mathcal S_\ell:={\rm cut} (\mathcal{X}_\ell),
    \]
    and define
    \[
    \mathcal{X}_{\ell+1}:={\rm comp}({\rm ins}\,(\mathcal S_\ell)).
    \]
\end{enumerate}
  We will also call this construction the {\em inflation-substitution iteration}.
   \end{definition}

\begin{theorem}\label{conthm}
\begin{enumerate}
    \item If the sequence $(\mathcal{X}_\ell)_{\ell\in\mathbb N}$ converges, then one obtains a tiling of the half-line with prototiles $R_1, ..., R_N$, and 
$\lim \mathcal X_\ell$ as indicator sequence.
\item Moreover, if there exists some integer $d>1$ such that the subsequence $(\mathcal{X}_{1+ dm})_{m\in\mathbb N_0}$ converges, then one obtains a tiling of the half-line with prototiles $R_1, ..., R_N$, and 
$\lim \mathcal X_{1+dm}$ as indicator sequence.
\end{enumerate}
\end{theorem}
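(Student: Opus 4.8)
The plan is to use completeness of the ultrametric space $\mathcal G(R_1,\ldots,R_N,E)$ to give meaning to the limit, and then to show that the limiting indicator sequence contains no $E$-entry, so that it genuinely represents a tiling of the whole half-line rather than merely of a compact subinterval. The one decisive ingredient is that the inflation-substitution map multiplies total length by exactly $\rho$.

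For part (1) I would first record that for any finite string $\mathcal S$ one has $\ell({\rm ins}(\mathcal S))=\rho\,\ell(\mathcal S)$ for the total length $\ell$: replacing a tile $R_i$ by ${\rm ins}(R_i)$ turns its length $L(R_i)$ into $\sum_j u_{ij}L(R_j)=\rho\,L(R_i)$ by the eigenvector relation \eqref{lengths}. Writing $\mathcal S_\ell={\rm cut}(\mathcal X_\ell)={\rm ins}^{\ell-1}(\mathcal S_1)$ and assuming, as in Definition \ref{def:inscon}, that the initial string $\mathcal S_1$ is non-empty, iteration gives $\ell(\mathcal S_\ell)=\rho^{\ell-1}\ell(\mathcal S_1)\to\infty$ since $\rho>1$; as every prototile has length at most $L_{\max}:=\max_j L(R_j)$, the number of tiles in $\mathcal S_\ell$ tends to infinity. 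Then I would fix a coordinate $k$: by the convergence hypothesis the first $k$ entries of $\mathcal X_\ell$ are constant for all large $\ell$, while by the length growth $\mathcal S_\ell$ has more than $k$ tiles for all large $\ell$, so those first $k$ entries are prototiles and not $E$. Hence the $k$-th entry of $\mathcal X:=\lim \mathcal X_\ell$ lies in $\{R_1,\ldots,R_N\}$, and since $k$ is arbitrary $\mathcal X$ has no $E$-entry. Its partial length sums then form a strictly increasing sequence of endpoints diverging to $\infty$, and with at most $N$ distinct lengths present $\mathcal X$ is the indicator sequence of a tiling of $[0,\infty)$ by $R_1,\ldots,R_N$. (That this tiling is moreover $\rho$-inflationary can be added by passing to the limit in $\mathcal X_{\ell+1}={\rm comp}({\rm ins}(\mathcal S_\ell))$, using the contraction estimate $dist({\rm ins}(\mathcal X),{\rm ins}(\mathcal Y))\le dist(\mathcal X,\mathcal Y)$ that makes ${\rm ins}$ continuous.)

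For part (2) I would reduce everything to part (1) applied to the map ${\rm ins}^d$. Using ${\rm cut}\circ{\rm comp}={\rm id}$ on finite strings, a short induction on the one-step recursion of Definition \ref{def:inscon} yields $\mathcal X_{\ell+d}={\rm comp}({\rm ins}^d({\rm cut}(\mathcal X_\ell)))$, so the subsequence $\mathcal Y_m:=\mathcal X_{1+dm}$ is exactly the inflation-substitution iteration attached, via Remark \ref{instok}, to the matrix $U^d$ with substitution rules read off from ${\rm ins}^d$. Since the Perron eigenvector satisfies $U^d w=\rho^d w$ with $\rho^d>1$, the length identity becomes $\ell({\rm ins}^d(\mathcal S))=\rho^d\,\ell(\mathcal S)$ and total length again diverges along the subsequence. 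The key point to stress is that the argument of part (1) uses only this length growth together with positivity of the prototile lengths, and never any irreducibility; thus, although $U^d$ need not be irreducible when $U$ is only irreducible, the conclusion of part (1) carries over verbatim to $(\mathcal Y_m)$ and produces a tiling of $[0,\infty)$ with limit $\lim \mathcal X_{1+dm}$.

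The hard part will be the step ruling out $E$-entries in the limit: convergence by itself only says that prefixes stabilize, and it must be combined quantitatively with the exact factor-$\rho$ length expansion to guarantee that each fixed coordinate is eventually occupied by a genuine prototile. The remaining ingredients — recognizing the subsequence as an ${\rm ins}^d$-iteration and noting that irreducibility of $U^d$ is not required — are routine bookkeeping with the cut/complete operations and the eigenvector relation.
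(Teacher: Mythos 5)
Your proof is correct. Note, however, that the paper itself offers no proof of Theorem \ref{conthm} at all: it is stated immediately after Definition \ref{def:inscon} and treated as evident from the construction, so there is no argument to compare yours against. What your write-up supplies is precisely the one point that is not a tautology, namely that the limit indicator sequence contains no $E$-entries and hence represents a tiling of all of $[0,\infty)$ rather than of a compact interval. Your mechanism for this --- the exact length identity $\ell({\rm ins}(\mathcal S))=\rho\,\ell(\mathcal S)$ coming from the eigenvector relation \eqref{lengths}, which forces $|{\rm cut}(\mathcal X_\ell)|\to\infty$ and so eventually pushes every fixed coordinate into the prototile range before the prefix stabilizes --- is sound, as is the observation that divergence of the partial length sums (lengths bounded below by $\min_j L(R_j)>0$) gives coverage of the whole half-line. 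The reduction of part (2) to part (1) via ${\rm ins}^d$ and ${\rm cut}\circ{\rm comp}={\rm id}$, together with your remark that irreducibility of $U^d$ is never needed (only the positive eigenvector relation $U^dw=\rho^dw$), is also correct and is consistent with how the paper later uses subsequences in Corollary \ref{convcor} and Proposition \ref{divrem}. The only caveats are cosmetic: you must (as you do) exclude the empty initial string, and strictly speaking the limit tiling's prototile set may be a proper subset of $\{R_1,\ldots,R_N\}$ if some prototile never occurs, a point the paper also glosses over.
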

We point out that by Lemma \ref{joinlem}, only any initial part of the initial string matters. More precisely: Given initial strings $\mathcal S_1$ and $\mathcal S_1^*=\mathcal S_1|\mathcal T$ for some $\mathcal T$, one sequence converges if and only if the other does, and the limits are equal.
We note  a consequence of \eqref{lengths}.
\begin{lemma}\label{lem:count}
    Let $\mathcal S$ be a finite string, and for each prototile $R_i$ denote by $m_i$ the number of occurrences of $R_i$ in $\mathcal S$. Then the respective numbers of occurrences in the string $\widetilde {\mathcal S}$ obtained from one inflation-substitution step are obtained from the transpose of the partition matrix, via
    \begin{equation}
    U^{\rm tr}\cdot     \begin{pmatrix} m_1\\ \vdots\\ m_N
        \end{pmatrix}.
    \end{equation}
\end{lemma}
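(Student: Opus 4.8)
The plan is to fix a single target prototile $R_j$ and to count how many copies of it occur in $\widetilde{\mathcal S}={\rm ins}(\mathcal S)$, then exhibit this count as a linear function of the input multiplicities $m_1,\dots,m_N$. The structural fact that makes everything additive is Lemma \ref{joinlem}: since ${\rm ins}$ commutes with the join, counting occurrences of $R_j$ after one substitution step is the same as counting them block by block and summing. Concretely, I would write $\mathcal S=(R_{j_1},\dots,R_{j_\ell})$, use ${\rm ins}(\mathcal S)={\rm ins}(R_{j_1})|\cdots|{\rm ins}(R_{j_\ell})$, and conclude that the number of occurrences of $R_j$ in $\widetilde{\mathcal S}$ is the sum, over the $\ell$ positions $t=1,\dots,\ell$, of the number of occurrences of $R_j$ inside the block ${\rm ins}(R_{j_t})$.

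Next I would reorganize this sum by \emph{source type} rather than by position. Each occurrence of a prototile $R_i$ in $\mathcal S$ produces exactly one block ${\rm ins}(R_i)$, and by the defining property of the inflation-substitution map in Definition \ref{def:ins} (cf.\ \eqref{insdecomp}), that block contains $R_j$ precisely $u_{ij}$ times. As $R_i$ occurs $m_i$ times in $\mathcal S$, all the $R_i$-blocks together contribute $m_i u_{ij}$ copies of $R_j$. Summing over the source index $i$ gives the multiplicity of $R_j$ in $\widetilde{\mathcal S}$ as
\[
\widetilde m_j=\sum_{i=1}^N u_{ij}\,m_i.
\]

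Finally I would match this with the asserted matrix product. Writing $m=(m_1,\dots,m_N)^{\rm tr}$, the $j$-th entry of $U^{\rm tr}m$ is $\sum_i (U^{\rm tr})_{ji}m_i=\sum_i u_{ij}m_i=\widetilde m_j$, so the vector of output counts is exactly $U^{\rm tr}m$. This is routine bookkeeping, and the only point genuinely worth care — the closest thing to an ``obstacle'' — is the appearance of the transpose rather than $U$ itself: the count is indexed by a fixed target $j$ while the source type $i$ is summed out, whereas $u_{ij}$ records how many copies of $R_j$ are produced from $R_i$, so the summation contracts the \emph{first} index of $U$, which is precisely left-multiplication by $U^{\rm tr}$. (For completeness one notes that the ``$E$'' entries are irrelevant: both ${\rm ins}$ and the occurrence count ignore them, so it is harmless to work with the cut-off finite string $\mathcal S$ and with ${\rm ins}(\mathcal S)$ in place of its completion.)
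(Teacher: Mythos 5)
Your proposal is correct and takes essentially the same approach as the paper: both count, for a fixed target $R_j$, the $u_{ij}$ copies produced by each occurrence of $R_i$ and sum to get $\widetilde m_j=\sum_i u_{ij}m_i$, i.e.\ the $j$-th entry of $U^{\rm tr}m$. Your version merely spells out the block decomposition via Lemma \ref{joinlem} and the index bookkeeping more explicitly than the paper's one-line argument.
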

\begin{proof}
    According to \eqref{lengths}, in the inflated $\rho R_k$ one has $U_{kj}$ appearances of $R_j$. So the total number of appearances of $R_j$ in the string $\widetilde{\mathcal S}$ equals $\sum_k m_ku_{kj}$.
\end{proof}
\subsection{About convergence} \label{convergence}
We start with a basic convergence result.

\begin{proposition}\label{stableprop}
Given the situation of Definition \ref{def:inscon}, assume that the sequence $(\mathcal X_\ell)_{\ell\geq 1}$ has initial string $\mathcal S$ such that ${\rm ins}\,(\mathcal S)=\mathcal S|\mathcal T$ for some string $\mathcal T$. Then
we obtain
\begin{enumerate}
\item With $\mathcal S_{1}:=\mathcal S$, one has $\mathcal S_{\ell+1}=\mathcal S_\ell|\mathcal T_\ell$ for all $\ell$, 
\item The sequence $(\mathcal X_\ell)_{\ell\geq 1}$ converges.
\item The limit 
$$\lim_{\ell}\mathcal{X}_{\ell} = \mathcal{X}$$ is stable under inflation and substitution, i.e.
$${\rm ins}(\mathcal{X}) = \mathcal{X}.$$
\end{enumerate}
Thus one obtains an inflationary tiling with multiplier $\rho$.
\end{proposition}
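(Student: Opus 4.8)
The plan is to leverage the compatibility of ${\rm ins}$ with joins (Lemma~\ref{joinlem}) to turn the hypothesis ${\rm ins}(\mathcal S)=\mathcal S|\mathcal T$ into the statement that the finite strings $\mathcal S_\ell$ grow purely by concatenation, and then to read off convergence and stability from the ultra-metric structure. The one genuinely analytic ingredient, namely that the strings grow without bound, will come from $\rho>1$ via Perron--Frobenius.

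First I would record the basic identity $\mathcal S_{\ell+1}={\rm ins}(\mathcal S_\ell)$: since ${\rm ins}(\mathcal S_\ell)$ is a finite string over $\{R_1,\ldots,R_N\}$, applying ${\rm comp}$ and then ${\rm cut}$ returns it unchanged, so $\mathcal S_{\ell+1}={\rm cut}({\rm comp}({\rm ins}(\mathcal S_\ell)))={\rm ins}(\mathcal S_\ell)$. Part~(1) then follows by induction: the base case $\mathcal S_2={\rm ins}(\mathcal S)=\mathcal S|\mathcal T=\mathcal S_1|\mathcal T$ is exactly the hypothesis, so I set $\mathcal T_1:=\mathcal T$; for the inductive step, assuming $\mathcal S_{\ell+1}=\mathcal S_\ell|\mathcal T_\ell$, I apply ${\rm ins}$ together with Lemma~\ref{joinlem} to obtain
\[
\mathcal S_{\ell+2}={\rm ins}(\mathcal S_{\ell+1})={\rm ins}(\mathcal S_\ell)\,|\,{\rm ins}(\mathcal T_\ell)=\mathcal S_{\ell+1}\,|\,{\rm ins}(\mathcal T_\ell),
\]
so that $\mathcal T_{\ell+1}:={\rm ins}(\mathcal T_\ell)$ does the job.

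For convergence (part~(2)), part~(1) shows that $\mathcal S_{\ell+1}$ extends $\mathcal S_\ell$, hence for all $k\geq \ell$ the first $|\mathcal S_\ell|$ entries of $\mathcal X_k={\rm comp}(\mathcal S_k)$ coincide with $\mathcal S_\ell$. It thus suffices to show $|\mathcal S_\ell|\to\infty$. Writing $m^{(\ell)}$ for the occurrence-count vector of $\mathcal S_\ell$, Lemma~\ref{lem:count} gives $m^{(\ell)}=(U^{\rm tr})^{\ell-1}m^{(1)}$, and pairing with the positive right Perron eigenvector $w$ of $U$ yields $w^{\rm tr}m^{(\ell)}=\rho^{\ell-1}\,w^{\rm tr}m^{(1)}$. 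Since $\rho>1$ and $w^{\rm tr}m^{(1)}>0$ (as $w>0$ and $\mathcal S$ is nonempty, so $m^{(1)}\neq 0$), the total tile count $|\mathcal S_\ell|=\sum_i m^{(\ell)}_i$ tends to infinity. Therefore the common-prefix length of $\mathcal X_k,\mathcal X_{k'}$ for $k,k'\geq\ell$ is at least $|\mathcal S_\ell|$, so $(\mathcal X_\ell)$ is Cauchy and converges by completeness of the ultra-metric space; its limit $\mathcal X$ has $n$-th entry equal to the eventually stable $n$-th entry of the $\mathcal S_\ell$, contains no $E$, and is the indicator sequence of a tiling of $[0,\infty)$.

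For stability (part~(3)) I would feed the approximating strings $\mathcal S_\ell$ into the definition of ${\rm ins}$ on infinite sequences (Definition~\ref{def:ins}): since $\mathcal X=\lim_\ell{\rm comp}(\mathcal S_\ell)$, the well-definedness recorded there gives
\[
{\rm ins}(\mathcal X)=\lim_\ell {\rm comp}({\rm ins}(\mathcal S_\ell))=\lim_\ell \mathcal X_{\ell+1}=\mathcal X .
\]
Being fixed by inflation and substitution, $\mathcal X$ is precisely the indicator sequence of a $\rho$-inflationary tiling in the sense of Definition~\ref{inflatess}. I expect the main obstacle to be the growth claim $|\mathcal S_\ell|\to\infty$: it is the only point where $\rho>1$ is genuinely used, and without it the sequence could stabilize at a merely finite tiling that never fills the half-line. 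Once this is secured through the Perron--Frobenius pairing, the remaining cut/comp/join bookkeeping and the continuity of ${\rm ins}$ are routine consequences of Lemma~\ref{joinlem}.
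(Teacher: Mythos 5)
Your proof is correct and its core is exactly the paper's argument: the induction via Lemma~\ref{joinlem} establishing $\mathcal S_{\ell+1}=\mathcal S_\ell|\mathcal T_\ell$ is precisely the paper's proof, which then treats parts (2) and (3) as immediate consequences. Your additional Perron--Frobenius pairing showing $|\mathcal S_\ell|\to\infty$ fills in a detail the paper leaves implicit --- it is what rules out the degenerate possibility that the strings stabilize (e.g.\ if $\mathcal T$ were empty) and the limit fails to exhaust $[0,\infty)$ --- so it is a worthwhile supplement rather than a departure.
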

\begin{proof}
It is sufficient to show 
$\mathcal S_{\ell+1}=\mathcal S_\ell|\mathcal T_\ell$ for all $\ell$. For $\ell=1$ this is clear, and we proceed by induction. Assuming that this holds for $\ell$, we have with Lemma \ref{joinlem}:
\[
\mathcal S_{\ell+2}={\rm ins}\,(\mathcal S_\ell|\,(\mathcal T_\ell)={\rm ins}\,(\mathcal S_\ell)|{\rm ins}\,(\mathcal T_\ell)=\mathcal S_{\ell+1}|\mathcal T_{\ell+1}
\]
with $\mathcal T_{\ell+1}:={\rm ins}\,(\mathcal T_\ell)$.
\end{proof}
\begin{remark}
    In the setting of the Proposition, assume w.l.o.g.~that $\mathcal S=(R_1,\ldots)$. Then, letting $\widetilde{\mathcal S}=(R_1)$, one has ${\rm ins}\,(\widetilde{\mathcal S})=\widetilde{\mathcal S}|\widetilde {\mathcal T}$ for some $\widetilde {\mathcal T}$, thus the sequence $(\widetilde{\mathcal X}_\ell)$ constructed from the initial string $(R_1)$, converges, with the same limit as $(\mathcal X_\ell)$. To see the latter, notice that from $\mathcal S=\mathcal{\widetilde S}|\mathcal V$, for some $\mathcal V$ one obtains
    \[
    {\rm ins}^m\,(\mathcal S)={\rm ins}^m\,(\mathcal{\widetilde S})|{\rm ins}^m\,(\mathcal V)
    \]
    for all $m$, and the leftmost position of the entries in the second string tends to infinity.\\
    Thus, to discuss convergence or divergence, given a matrix $U$ and
    substitution rules, it suffices to consider single-entry initial strings.
\end{remark}

Moreover, with Remark \ref{instok} we see:
\begin{corollary}
    Given the situation of Definition \ref{def:inscon}, and an integer $k>1$, assume that the sequence $(\mathcal X_\ell)_{\ell\geq 1}$ has initial string $\mathcal S$ such that ${\rm ins}^k\,(\mathcal S)=\mathcal S|\mathcal T$ for some string $\mathcal T$. Then
the limit 
$$\lim_{m\to\infty}\mathcal{X}_{1+km} = \widehat{\mathcal{X}}$$ satisfies
$${\rm ins}^k(\widehat{\mathcal{X}}) = \widehat{\mathcal{X}}.$$
Thus one obtains an inflationary tiling with multiplier $\rho^k$.
\end{corollary}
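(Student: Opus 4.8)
The plan is to deduce the statement from Proposition \ref{stableprop} by applying it to the iterated map ${\rm ins}^k$ rather than to ${\rm ins}$. By Remark \ref{instok}, ${\rm ins}^k$ is itself an inflation and substitution map, with partition matrix $U^k$ and substitution rules given by the strings ${\rm ins}^k(R_i)$. Since the eigenvalues of $U^k$ are the $k$-th powers of those of $U$, its spectral radius is $\rho^k$, and $\rho>1$ forces $\rho^k>1$; moreover the Perron eigenvector $w$ of $U$ for $\rho$ is again a Perron eigenvector of $U^k$ for $\rho^k$, so the prototiles $R_1,\ldots,R_N$ and their lengths are unchanged. Hence the entire framework of Definitions \ref{def:ins} and \ref{def:inscon} is available with ${\rm ins}^k$ in place of ${\rm ins}$.

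First I would dispose of the index bookkeeping that identifies the subsequence $(\mathcal X_{1+km})_{m\geq 0}$ with the iteration sequence generated by ${\rm ins}^k$ from the initial string $\mathcal S$. Writing $\mathcal S_\ell={\rm cut}(\mathcal X_\ell)$ as in Definition \ref{def:inscon}, the identity ${\rm cut}({\rm comp}(\mathcal S_\ell))=\mathcal S_\ell$ shows that the completion/cutoff padding by $E$ is transparent to the recursion, so that $\mathcal S_{\ell+1}={\rm ins}(\mathcal S_\ell)$ and therefore $\mathcal S_\ell={\rm ins}^{\ell-1}(\mathcal S)$. In particular
\[
\mathcal S_{1+km}=({\rm ins}^k)^m(\mathcal S),
\]
which is exactly the $m$-th finite string produced by the inflation-substitution iteration built from ${\rm ins}^k$ with initial string $\mathcal S$; applying ${\rm comp}$ once more, the term $\mathcal X_{1+km}$ coincides with the $(m+1)$-st term of that iteration, the value $m=0$ corresponding to $\mathcal X_1={\rm comp}(\mathcal S)$.

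Now the hypothesis ${\rm ins}^k(\mathcal S)=\mathcal S|\mathcal T$ is precisely the self-similarity assumption of Proposition \ref{stableprop} for the map ${\rm ins}^k$. That proposition then yields at once that the sequence $(\mathcal X_{1+km})_m$ converges and that its limit $\widehat{\mathcal X}$ is stable, ${\rm ins}^k(\widehat{\mathcal X})=\widehat{\mathcal X}$. Finally, stability under ${\rm ins}^k$ means that replacing each inflated tile $\rho^k R_i$ by its substitution string reproduces the tiling, which, since $U^k$ acts on the prototile lengths with factor $\rho^k$, is exactly the assertion that the limiting tiling is inflationary with multiplier $\rho^k$.

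The argument has essentially no obstacle, since the genuine content, namely convergence and stability of the self-similar limit, is established once and for all in Proposition \ref{stableprop}. The only point that deserves a second look is that Proposition \ref{stableprop} is phrased within ``the situation of Definition \ref{def:inscon}'', which inherits the standing irreducibility assumption of Subsection \ref{subsec:nonneg}, whereas $U^k$ need not be irreducible in general. This causes no real difficulty: inspecting the proof of Proposition \ref{stableprop}, it uses only the seed identity ${\rm ins}(\mathcal S)=\mathcal S|\mathcal T$ together with Lemma \ref{joinlem}, and never invokes irreducibility, so it transfers verbatim to ${\rm ins}^k$.
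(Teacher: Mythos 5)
Your proposal is correct and follows exactly the route the paper intends: the paper derives this corollary by invoking Remark \ref{instok} (that ${\rm ins}^k$ is itself an inflation-substitution map with matrix $U^k$) and then applying Proposition \ref{stableprop} to it, which is precisely your argument. Your additional care about the index bookkeeping and about irreducibility of $U^k$ only makes explicit what the paper leaves implicit.
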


We note some special consequences, expressed in the language of tilings: 
\begin{corollary}\label{convcor} Let $U$ be an irreducible non-negative integer matrix having spectral radius $>1$, and  $1\leq j\leq N$.
\begin{enumerate}
    \item If the decomposition of $\rho R_j$, according to \eqref{insdecomp} in Definition \ref{def:ins}, starts leftmost with $R_j$ (thus, necessarily $u_{jj}>0$ in the matrix $U$, and $U$ is primitive by Theorem \ref{primmat}), then the initial string with single entry $R_j$ satisfies the hypothesis of Proposition \ref{stableprop}, and yields a convergent sequence. The limit is inflationary with multiplier $\rho$. 
    \item If, after $k>1$ iterations, the decomposition of $\rho^k R_j$, according to \eqref{insdecomp} in Definition \ref{def:ins}, starts leftmost with $R_j$ (thus, necessarily the matrix $U^k$ has positive $(j,j)$-entry and is primitive), then taking the initial string with single entry $R_j$, the subsequence with indices $1+km$, $m\geq 0$ converges and yields an inflationary tiling with multiplier $\rho^k$.
\end{enumerate}
\end{corollary}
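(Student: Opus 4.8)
The plan is to reduce both assertions directly to Proposition \ref{stableprop} and the Corollary immediately following it, observing that the geometric hypotheses here are nothing but the single-entry versions of the stability condition ${\rm ins}\,(\mathcal S)=\mathcal S|\mathcal T$ used there. So the whole task is a dictionary translation between the phrasing ``the decomposition of $\rho R_j$ starts leftmost with $R_j$'' and that combinatorial condition on initial strings.

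First I would treat part (1). The assumption that the decomposition \eqref{insdecomp} of $\rho R_j$ starts leftmost with $R_j$ means precisely that ${\rm ins}\,(R_j)=\left(R_j,S_{j,2},\ldots,S_{j,v_j}\right)$, i.e. ${\rm ins}\,(R_j)=R_j|\mathcal T$ with $\mathcal T=(S_{j,2},\ldots,S_{j,v_j})$. Taking the single-entry initial string $\mathcal S=(R_j)$ we then have ${\rm ins}\,(\mathcal S)={\rm ins}\,(R_j)=\mathcal S|\mathcal T$, which is exactly the hypothesis of Proposition \ref{stableprop}. That proposition yields convergence of $(\mathcal X_\ell)_{\ell\geq 1}$ to a limit $\mathcal X$ with ${\rm ins}(\mathcal X)=\mathcal X$, hence an inflationary tiling with multiplier $\rho$. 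The parenthetical primitivity claim is then a byproduct: the leftmost appearance of $R_j$ forces $u_{jj}\geq 1$, so $(U)_{jj}>0$; since $m=1$ lies in the set $\{m:\,(U^m)_{jj}>0\}$, the gcd of that set is $1$, and $U$ is primitive by Theorem \ref{primmat}(2) (equivalently, $U$ is irreducible with positive trace, so Corollary \ref{CortoPrim}(2) applies).

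For part (2) the only additional ingredient is Remark \ref{instok}: ${\rm ins}^k$ is itself an inflation-substitution map, with matrix $U^k$ and substitution rules read off from ${\rm ins}^k(R_i)$. The hypothesis that, after $k$ iterations, the decomposition of $\rho^k R_j$ starts leftmost with $R_j$ translates into ${\rm ins}^k(R_j)=R_j|\mathcal T$ for some string $\mathcal T$. Again setting $\mathcal S=(R_j)$ gives ${\rm ins}^k(\mathcal S)=\mathcal S|\mathcal T$, which is precisely the hypothesis of the Corollary following Proposition \ref{stableprop}. That Corollary then delivers convergence of the subsequence $(\mathcal X_{1+km})_{m\geq 0}$ to a limit $\widehat{\mathcal X}$ with ${\rm ins}^k(\widehat{\mathcal X})=\widehat{\mathcal X}$, i.e. an inflationary tiling with multiplier $\rho^k$; in particular $(U^k)_{jj}>0$, as asserted.

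I do not expect a genuine obstacle here, since the content is essentially bookkeeping. The one point that needs care is the part-(2) reduction: one must legitimately pass to ${\rm ins}^k$ as an inflation-substitution map in its own right, which is guaranteed by Remark \ref{instok}, and one must check that restricting attention to single-entry initial strings loses nothing, which is the content of the remark following Proposition \ref{stableprop} (only an initial segment of the initial string matters for convergence and for the limit). Once these identifications are made, both statements follow immediately.
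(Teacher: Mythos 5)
Your proposal is correct and follows the same route as the paper, which simply notes that part (1) is a direct instance of Proposition \ref{stableprop} and that part (2) reduces to it because $k$-fold iteration is an inflation-substitution construction with matrix $U^k$ (your Remark \ref{instok} citation). You merely spell out the dictionary between ``starts leftmost with $R_j$'' and ${\rm ins}(\mathcal S)=\mathcal S|\mathcal T$ more explicitly than the paper does; no gap.
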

\begin{proof}
    The first statement is clear. For the second note that $k$-fold iteration amounts to an inflation-substitution construction with matrix $U^k$.
\end{proof}
\begin{remark}
    Substantial parts of the corollary hold true even when $U$ is a non-negative integer matrix with spectral radius $\rho>1$, and a positive eigenvector $w$ for $\rho$, mimicking the construction in Definitions \ref{def:ins} and \ref{def:inscon}. A crucial consequence of primitivity is that in part (2) the existence of $k$ is guaranteed for each $j$. One could also proceed and obtain a different proof of Theorem \ref{classifyprimitive}.
\end{remark}

We consider some examples and non-examples for the inflation-substitution iteration.
\begin{example}\label{instileex}
    \begin{enumerate}[(a)]
        \item {\em Distinguished Resnikoff silver numbers: }Let $\rho$ satisfy \eqref{dist-silverpoly}. Then, in particular, 
        the  degree $N$ of $\rho$ is $\geq 2$. From the partition matrix $U_N$  we obtain prototiles $R_i$ of respective length $\rho^{1-i}$, $1\leq i\leq N$, with the sum of lengths equal to $\rho$. The decomposition of inflated tiles is uniquely determined for all $i>1$, since
        \[
        \rho R_i=R_{i-1}, \quad 2\leq i\leq N,
        \]
        while, with any permutation $\pi$ of $\{1,\ldots,N\}$,
        \[
        \rho R_1=R_{\pi(1)}|\cdots|R_{\pi(N)}.
        \] 
        According to Proposition \ref{stableprop}, whenever $\pi(1)=1$ we obtain a convergent sequence of tilings, with a $\rho$-inflationary limit. In case $\pi(1)\not=1$, and $k$ minimal with $\pi^k(1)=1$, one obtains $\rho^k$-inflationary tilings from subsequences.
        Note that for $N=2$ (the golden number) only one convergent sequence exists. 
        \item  The  {\em super-golden number} $\psi,$  with $\psi^3 - \psi^2 - 1 =0$ yields prototiles $R_i$ with lengths
        $\psi^{1-i}$, respectively, $1\leq i\leq 3$. In the inflation-substitution process the decomposition of inflated tiles yields two possible cases:
        Either  $\psi R_1=R_1|R_3,\,\psi R_2=R_1,\,\psi R_3=R_2$ or $\psi R_1=R_3|R_1,\psi R_2=R_1,\psi R_3=R_2$.
        The first of these yields a convergent sequence and produces a $\psi$-inflationary tiling; for the second, passing to a suitable subsequence will produce a $\psi^3$-inflationary tiling.
         \item The  {\em  plastic number} $\theta$, with $\theta^3-\theta-1=0$ yields prototiles $R_i$ with lengths $\theta^{1-i}$, respectively, $1\leq i\leq 3$. In the inflation-substitution process the decomposition of inflated tiles yields two possible cases:
        Either  $\theta R_1=R_2|R_3,\,\theta R_2=R_1,\,\theta R_3=R_2$ or $\theta R_1=R_3|R_2,\,\theta R_2=R_1,\,\theta R_3=R_2$.
      As stated in full generality  in  item (1) of Proposition \ref{divrem} we observe that none of these yields a convergent sequence producing a $\theta$-inflationary tiling, since there is no $j$ such that $\theta R_j=R_j|\cdots$.
      By passing to a suitable subsequences, however, the first substitution noted above will produce a $\theta^2-$inflationary tiling, and the second will produce a $\theta^3$-inflationary tiling.
       (We observe that $\theta^2$ has minimal polynomial $X^3-2X^2+X-1$; so $\theta^2$ is not a root of an irreducible silver polynomial.
       Likewise, one sees that $\theta^3=\theta +1$ is not a root of an irreducible silver polynomial, because its minimal polynomial is $X^3-3X^2-2X+1$).
  \end{enumerate}
\end{example}
\subsection{Matrix transformations versus tilings}


\subsubsection{A review of companion matrices}\label{subsec:compmat}
Consider a polynomial 
\begin{equation}
Q(X)=X^N-\sum_{j=1}^Nc_jX^{N-j}\in K[X],
\end{equation}
 over a field $K$. For reasons of convenience we will assume that $K$ is of characteristic zero, hence infinite.
We will call the matrix 
\begin{equation} \label{CDW}
C_{DW}=\begin{pmatrix} c_1&c_2&\cdots&\cdots&c_N\\ 
            1 & 0& & &0\\
        0&\ddots&  & & 0\\
        \vdots & & & &\vdots\\
    0&\cdots & & 1&0
\end{pmatrix},
\end{equation}
(see \cite{ChenLouck} for the notation), the {\em Frobenius companion matrix} of the polynomial $Q$,
following the terminology in Resnikoff \cite{Res}. We  abbreviate the first row of $C_{DW}$ by  $c^{tr}$. 

\begin{remark}
In Section \ref{partitionmatrixofsilver} we have discussed partition matrices of  this type and the inflationary tilings associated to them.
\end{remark}

\begin{remark} \label{companiontypes}
There is no unique notion of a ”companion matrix” of a polynomial or of  a  matrix, meaning in the latter case
the companion matrix of its characteristic polynomial. 
The following four types of a matrix appear in the literature:
\begin{enumerate}[(i)]
\item $C_{DW}$ as stated just above.
 \item 
 \begin{equation} \label{CDWtranpose}
C_{DW}^{\rm tr}=\begin{pmatrix} c_1&1&0&\cdots&0\\ 
            c_2 & 0& 1& &0\\
        \vdots& \vdots& \ddots & \ddots& 0\\
        c_{N-1} &0 &\cdots &0 &1\\
    c_N&0 & \cdots&\cdots &0
\end{pmatrix}
\end{equation}
with the first column $c$. See also Section \ref{partitionmatrixofsilver} for partition matrices of  this type and the inflationary tilings associated to them.
\item \begin{equation} \label{CPmat}
C_{P}=\begin{pmatrix} 0& 1&0\dots& &0\\ 
            0& 0&1\dots & &0\\
        0&\ddots&  & & 0\\
        \vdots & & & &\vdots\\
        0&\cdots & & 0&1 \\
           c_N&\cdots & & c_2&c_1
\end{pmatrix} = JC_{DW} J,
\end{equation}
where $J$ is the ”skew-diagonal” permutation matrix with the only 
non-vanishing entries
$J_{nm} = 1,$ iff $n + m = N + 1$. 
This type of companion matrix is used in Prokip \cite{Prokip}; the last row of $C_P$ equals $(c_N, \dots , c_1)=c^{\rm tr}J$. 
  \item \begin{equation} \label{CPtrans}
   C_{P}^{\rm tr}=\begin{pmatrix}
        0&0&\cdots&\cdots&0&c_N\\
        1&0&0&\cdots& 0 &c_{N-1}\\
        0&\ddots &\ddots & &\vdots & \vdots\\
        &&&&&\\
        \vdots & & & \ddots& 0& c_2\\
        0 & \cdots&\cdots & 0& 1&c_1
    \end{pmatrix} = JC_{DW}^{\rm tr}J;
\end{equation}
 this is also known as $C_M $, see e.g. Meyer \cite{Meyer}; its last column is the vector $J {c}$.  
  \end{enumerate}
All these matrices have the characteristic polynomial $Q$, and they are invertible if and only if $c_N\not=0$. Note that they are obtained from each other by conjugation with a permutation matrix, possibly combined with transposition.\end{remark}
The following facts about eigenvectors follow by straightforward computations (compare Remark \ref{lenghthoftiles}):
\begin{lemma}\label{lem:evec}
Let $\sigma$ be a nonzero root of $Q$. Then 
the column vector
 \begin{equation} \label{equeigenv}
    v:=  ( \frac{1}{\sigma^{n-1}}, \frac{1}{\sigma^{n-2}}, \dots, \frac{1}{\sigma}, 1)^{tr}
 \end{equation}
 is an eigenvector for $C_P$ for the eigenvalue $\sigma$.\\
Moreover, an eigenvector for the eigenvalue $\sigma$ of $C_P^{tr}$ is of the form
\begin{equation}\label{evectrafo}
 \begin{pmatrix} 
 c_N/\sigma \\
(\sum_{j=0}^{m-1} c_{m-j}\sigma^j)/\sigma^m , {2\leq m\leq N -1} \\ 
(\sum_{j=0}^{N-1} c_{N-j}\sigma^j)/\sigma^N \\1 
\end{pmatrix} =
\begin{pmatrix}
c_N/\sigma \\ (c_N+c_{N-1}\sigma)/\sigma^2 \\ (c_N+c_{N-1}\sigma+c_{N-2}\sigma^2/\sigma^3\\ \vdots \\ 1
\end{pmatrix} = T\cdot v
\end{equation}
with
\begin{equation}\label{Tmatrix}
T:=\begin{pmatrix}
    0&0&0& \dots &0&c_N&0\\
    0&0&0&\dots&c_N& c_{N-1}&0\\
     0&0&\dots&c_N&c_{N-1}&c_{N-2}&0&\\
    \vdots &&&&& \vdots\\
    c_N&c_{N-1}& \dots & \dots & \dots & c_2&0\\
    0&0&\dots & \dots &\dots &\dots &1\\
\end{pmatrix}.
\end{equation}
\end{lemma}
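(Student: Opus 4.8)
The plan is to verify both claims by direct substitution into the defining eigenvector equations $C_P v = \sigma v$ and $C_P^{\mathrm{tr}} u = \sigma u$, exploiting the fact that every row of a companion matrix except the ``companion row'' encodes a trivial shift, while the companion row collapses to the defining relation $Q(\sigma)=0$, i.e. $\sum_{j=1}^N c_j\sigma^{N-j}=\sigma^N$. Nothing here is conceptual; the whole lemma is bookkeeping organized around this single observation.

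For the first assertion, writing $v_k = \sigma^{k-N}$ for $1\le k\le N$ (so that $v$ is the vector in \eqref{equeigenv}), I would read off from \eqref{CPmat} that for $1\le i\le N-1$ row $i$ of $C_P$ gives $(C_Pv)_i = v_{i+1} = \sigma^{i+1-N} = \sigma\,v_i$, which holds identically because $v$ is a geometric progression. The only nontrivial row is the last one, with entries $(c_N,\dots,c_2,c_1)$; here $(C_Pv)_N = \sum_{k=1}^N c_{N+1-k}\,\sigma^{k-N} = \sigma\sum_{j=1}^N c_j\sigma^{-j}$, and this equals $\sigma v_N=\sigma$ precisely when $\sum_{j=1}^N c_j\sigma^{N-j}=\sigma^N$, i.e. $Q(\sigma)=0$. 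Thus the first claim is literally the statement that $\sigma$ is a root of $Q$.

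For the second assertion I would first check that $u := Tv$ from \eqref{evectrafo} has components $u_1 = c_N/\sigma$, $u_m = \big(\sum_{j=0}^{m-1} c_{N-j}\sigma^{\,j}\big)/\sigma^{m}$ for $2\le m\le N-1$, and $u_N = 1$; multiplying $T$ from \eqref{Tmatrix} against $v$ reproduces exactly these entries, and at $m=N$ the same formula gives $\big(\sum_{j=0}^{N-1}c_{N-j}\sigma^j\big)/\sigma^N = 1$, again by $Q(\sigma)=0$. Then I would substitute $u$ into $C_P^{\mathrm{tr}}u=\sigma u$ using the subdiagonal-plus-last-column shape of $C_P^{\mathrm{tr}}$ in \eqref{CPtrans}: row $1$ demands $c_N u_N = \sigma u_1$; the interior rows $2\le i\le N-1$ demand $u_{i-1} + c_{N+1-i}u_N = \sigma u_i$; and the last row demands $u_{N-1} + c_1 u_N = \sigma u_N$. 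Row $1$ fixes $u_1$, the interior rows are precisely the recursion $\sigma u_m = u_{m-1} + c_{N+1-m}$ that telescopes to the stated $u_m$, and the final row is then a consistency condition that telescopes to $\sum_{k=1}^N c_k\sigma^{N-k} = \sigma^N$, i.e. once more $Q(\sigma)=0$.

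I expect no genuine obstacle, only the risk of an index error. The one place to be careful is the index alignment in the anti-triangular matrix $T$ of \eqref{Tmatrix} and in the last-column entries $(C_P^{\mathrm{tr}})_{i,N}=c_{N+1-i}$ of \eqref{CPtrans}, where an off-by-one in the subscript of the $c_j$ is easy to commit; I would pin this down by matching against the fully expanded column vector on the right-hand side of \eqref{evectrafo}. As a sanity check, and a possible alternative to the second computation, I could instead observe that $C_P = JC_{DW}J$ and hence $C_P^{\mathrm{tr}} = JC_{DW}^{\mathrm{tr}}J$ with $J$ the order-reversing involution, so an eigenvector of $C_{DW}$ (respectively $C_{DW}^{\mathrm{tr}}$) maps under $J$ to one of $C_P$ (respectively $C_P^{\mathrm{tr}}$); but the direct verification above is shortest and entirely self-contained.
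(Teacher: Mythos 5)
Your proposal is correct and coincides with what the paper intends: the paper offers no written proof beyond the remark that the lemma ``follows by straightforward computations,'' and your row-by-row verification of $C_Pv=\sigma v$, $Tv=u$, and $C_P^{\mathrm{tr}}u=\sigma u$ is exactly that computation, with each nontrivial row correctly collapsing to $Q(\sigma)=0$. You also rightly resolve the paper's internal typo ($c_{m-j}$ versus $c_{N-j}$ in \eqref{evectrafo}) by matching against the expanded column vector.
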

 \begin{proposition}
Let $C_P$ be invertible; equivalently $c_N\not=0$. Then
\begin{equation} \label{conjcomp}
 C_P^{\rm tr} = TC_PT^{-1}.
 \end{equation}
with the invertible matrix $T$ from \eqref{Tmatrix}.
\end{proposition}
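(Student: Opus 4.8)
The plan is to exploit the eigenvector information already assembled in Lemma \ref{lem:evec} and then to pass from the case of distinct roots to the general case by a polynomial-identity argument. First I would rewrite the asserted identity \eqref{conjcomp} in the equivalent form $C_P^{\rm tr}\,T = T\,C_P$, which makes no reference to $T^{-1}$ and whose entries are polynomials in $c_1,\ldots,c_N$. Invertibility of $T$ is not an obstacle here: expanding along the last row and last column of \eqref{Tmatrix} (both equal to the standard basis vector) reduces the determinant to that of the anti-triangular upper-left block, giving $\det T=\pm c_N^{N-1}$. Thus $T$ is invertible precisely when $c_N\neq 0$, and in that range the two formulations coincide.

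Next I would verify the identity in the generic situation, namely when $Q$ has $N$ distinct nonzero roots in an algebraic closure $\overline K$. For each such root $\sigma$, let $v_\sigma$ be the eigenvector \eqref{equeigenv} of $C_P$, so that $C_P v_\sigma=\sigma v_\sigma$; by Lemma \ref{lem:evec} the vector $T v_\sigma$ is the eigenvector \eqref{evectrafo} of $C_P^{\rm tr}$, so that $C_P^{\rm tr}(T v_\sigma)=\sigma\,(T v_\sigma)$. Combining the two relations yields
\[
(T C_P)\,v_\sigma = T(\sigma v_\sigma)=\sigma\,(T v_\sigma)=C_P^{\rm tr}(T v_\sigma)=(C_P^{\rm tr}\,T)\,v_\sigma,
\]
so the matrices $T C_P$ and $C_P^{\rm tr}T$ agree on every $v_\sigma$. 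Since the $\sigma$ are distinct, the $v_\sigma$ form a (Vandermonde) basis of $\overline K^{\,N}$, and two matrices that agree on a basis are equal; hence $C_P^{\rm tr}T=T C_P$ whenever the roots are distinct and nonzero.

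Finally I would remove the genericity hypothesis. Each entry of $C_P^{\rm tr}T-T C_P$ is a fixed polynomial in $c_1,\ldots,c_N$ with integer coefficients. The locus of coefficient tuples for which $Q$ has $N$ distinct nonzero roots is the nonempty Zariski-open set on which the product of the discriminant of $Q$ and $c_N$ does not vanish; over the infinite (indeed algebraically closed) field $\overline K$ this set is Zariski-dense. A polynomial vanishing on a dense set vanishes identically, so $C_P^{\rm tr}T-T C_P=0$ for all coefficient tuples, and restricting to $c_N\neq 0$, where $T$ is invertible, yields \eqref{conjcomp}. The one delicate point is exactly this passage through repeated roots: the eigenvector computation by itself only settles the semisimple case, and the density argument is what upgrades it to an identity valid for every $c$ with $c_N\neq 0$. (Alternatively one could establish $C_P^{\rm tr}T=T C_P$ by a direct, if tedious, entrywise multiplication, bypassing the density step entirely.)
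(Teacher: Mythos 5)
Your proof is correct and follows essentially the same route as the paper's: verify $C_P^{\rm tr}T = TC_P$ on the eigenvector basis supplied by Lemma \ref{lem:evec} when the roots are distinct and nonzero, then extend to all coefficient tuples by observing that the entries of $C_P^{\rm tr}T - TC_P$ are polynomials vanishing on a Zariski-dense set. Your explicit intersection with the locus $c_N\neq 0$ (so that the eigenvector formula, which involves $1/\sigma$, makes sense) is a small point of added care over the paper's phrasing, but the argument is the same.
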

\begin{proof}
The essential arguments in the proof go back to Prokip \cite{Prokip}.\\
We first note that $\det T=\pm c_N^{N-1}$  shows invertibility of $T$.
\begin{enumerate}[(i)]
\item Preliminary consideration: The polynomial $Q$ has only simple roots in its splitting field if and only if its discriminant $\Delta$ is nonzero; see e.g. Lang \cite{Lang}, IV \S8, Corollary 8.4 and Proposition 8.5. The discriminant is a polynomial in $(c_1,\ldots, c_N)\in K^N$, and nonzero at $c_1=\cdots=c_{N-1}=0$, $c_N=1$ (when the roots are the $N$ distinct  $N^{\rm th}$ roots of unity). So the discriminant is nonzero on a Zariski-open and dense subset of $K^N$.
\item Suppose that the discriminant is nonzero, and let $\sigma_1,\ldots,\sigma_N$ be the pairwise distinct (and nonzero) roots of $Q$ in some extension field of $K$. Then by Lemma \ref{lem:evec} we have an eigenvector $v_j$ for the eigenvalue $\sigma_j$ of $C_P$, and $Tv_j$ is an eigenvector for $C_P^{\rm tr}$. With Lemma \ref{lem:evec} we have the identities 
\[
TC_P\,v_j=\sigma_j\,Tv_j=C_P^{\rm tr}Tv_j, \quad 1\leq j\leq N,
\]
and for the invertible matrix $V:=(v_1,\ldots,v_N)$ this implies $TC_PV=C_P^{\rm tr}TV$, which in turn shows that
\begin{equation}\label{transpoconj}
TC_P=C_P^{\rm tr} T.
\end{equation}
\item For the general case, note that \eqref{transpoconj} is equivalent to a set of polynomial equations in the $c_j$ (consider each matrix entry of the left and right hand side), which holds on a Zariski dense subset of $K^N$, therefore on all of $K^N$. Since $c_N\not=0$, $T$ is invertible.
\end{enumerate}
\end{proof}
\begin{remark}
    For the cases $\mathbb K=\mathbb R$ or $\mathbb K=\mathbb C$, one may avoid the Zariski topology and modify the above proof by considering non-invertible matrices as limits of invertible ones. One just has to use that the vanishing set of any nonzero polynomial is open and dense in the norm topology.
\end{remark}
\begin{remark}  We recall the general  result (see e.g. \cite{Kaplansky}, Theorem 66), that
any matrix $Y \in Mat(N,K)$ is conjugate to its transpose $Y^{tr}$ by an  invertible symmetric matrix in $GL(N,K).$  But for our purpose the special form of the conjugating matrix for $C_P$ and its transpose are relevant.
In case $K=\mathbb Q$, and integers $c_j\geq 0$, our argument shows that the matrix $T$ has non-negative integer entries, and this property will be crucial in the following subsections. \\    
\end{remark}
\subsubsection{Conjugations mapping a non-negative matrix to a non-negative matrix}
Our primary focus is on partition matrices and the tilings related to them. Generally one may ask how to produce ``new'' tilings from given ones.  A first step is to ask how to produce ``new'' partition matrices from given ones. In the present subsection we collect some facts and observations about non-negative matrices. The first lemma is obvious.

\begin{lemma}\label{trafolem}
Let $U$ be a non-negative matrix with integer entries. Then:
\begin{itemize}
\item For every permutation matrix $W$, the matrix $WUW^{-1}$ is a non-negative integer matrix, which has the same characteristic and minimal polynomials as $U$. If $U$ is irreducible, resp.\ primitive, then the same holds for $WUW^{-1}$.
\item The transpose $U^{\rm tr}$ is a non-negative integer matrix, which has the same characteristic and minimal polynomials as $U$. If $U$ is irreducible, resp.\ primitive, then the same holds for $U^{\rm tr}$.
\end{itemize}
\end{lemma}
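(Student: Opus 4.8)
The plan is to treat the two bullet points in parallel, verifying for each of $WUW^{-1}$ and $U^{\rm tr}$ the four asserted properties in turn: non-negativity with integer entries, equality of characteristic polynomials, equality of minimal polynomials, and preservation of irreducibility and of primitivity.

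First I would dispose of the entrywise claims. Since $W^{-1}=W^{\rm tr}$ for a permutation matrix, the conjugation $U\mapsto WUW^{-1}$ merely permutes the rows and the columns of $U$, so it reproduces the same entries in new positions; likewise $U\mapsto U^{\rm tr}$ reflects the entries across the diagonal. In both cases the multiset of entries is unchanged, so non-negativity and integrality are immediate.

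Next, the polynomial invariants. For $WUW^{-1}$ the matrices are similar, so both the characteristic and the minimal polynomial coincide with those of $U$ by the standard invariance of these polynomials under similarity. For $U^{\rm tr}$ the characteristic polynomial follows from $\det(XI-U^{\rm tr})=\det\bigl((XI-U)^{\rm tr}\bigr)=\det(XI-U)$. For the minimal polynomial I would use that $p(U^{\rm tr})=p(U)^{\rm tr}$ for every polynomial $p$, whence $p(U^{\rm tr})=0$ if and only if $p(U)=0$; thus $U$ and $U^{\rm tr}$ annihilate exactly the same polynomials and share the same minimal polynomial.

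Finally, the structural properties. For primitivity I would invoke the characterization in Theorem \ref{primmat} that a non-negative $A$ is primitive if and only if $A^m>0$ for some $m>0$. Since $(WUW^{-1})^m=WU^mW^{-1}$ and $(U^{\rm tr})^m=(U^m)^{\rm tr}$, and since permuting or transposing a positive matrix yields a positive matrix, primitivity of $U$ transfers to both. For irreducibility I would argue by contradiction from the definition: if $WUW^{-1}$ were reducible, say $P^{-1}(WUW^{-1})P$ is block triangular, then setting $Q:=W^{-1}P$ (again a permutation matrix) gives $Q^{-1}UQ$ block triangular, contradicting irreducibility of $U$. The one step deserving slightly more care, and the closest thing to an obstacle, is the transpose case: here I would exploit that for a permutation matrix $P^{-1}=P^{\rm tr}$, so that if $P^{-1}U^{\rm tr}P$ is block triangular, then its transpose equals $P^{-1}UP$ and is block triangular of the opposite (upper versus lower) type, again contradicting irreducibility of $U$. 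All remaining verifications are routine, which is why the statement is labelled obvious.
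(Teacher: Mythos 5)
Your proof is correct. The paper itself offers no proof of this lemma --- it is introduced with the remark that it is obvious --- so there is nothing to compare against beyond noting that your verification is the standard one the authors evidently had in mind: entries are merely permuted or reflected, similarity and transposition preserve characteristic and minimal polynomials, and the $A^m>0$ criterion of Theorem \ref{primmat} transfers primitivity. The one point that genuinely needs a word, the transpose/irreducibility step where a lower block triangular form becomes an upper one, you handle correctly, and it goes through precisely because the paper's definition of reducibility admits both lower and upper block triangular forms.
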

There is an obvious question how the tilings produced from these related matrices correspond to tilings  produced from $U$. In the first case, conjugation with a permutation matrix, the answer is straightforward: For tilings, the transformation amounts to a renumbering of the prototiles (given by the permutation underlying $W$). A more general answer will be given below.

As to conjugacy of non-negative matrices, one may first be interested in conjugation by matrices that map the positive orthant $\mathbb R_+^N$ onto itself, i.e. non-negative matrices with non-negative inverse. Clearly, conjugation by any such matrix will send non-negative matrices to non-negative matrices. Moreover, with tilings in mind, one is primarily interested in conjugating matrices that have integer entries. But the set of these matrices is rather small.
\begin{lemma}
Let $W$ be a real invertible $N\times N$ matrix that maps the positive orthant $\mathbb R_+^N$ onto itself. Then
\begin{enumerate}
\item $W$ is an automorphism of the positive orthant and $W^{-1}$ is  non-negative.
Moreover, $W=P\cdot D$, with $P$ a permutation matrix and $D$ a diagonal matrix with positive entries.
\item If both $W$ and $W^{-1}$ have integer entries, then they are permutation matrices.
\end{enumerate}
\end{lemma}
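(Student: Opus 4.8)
The plan is to exploit the fact that $W$ is a \emph{linear automorphism} of the cone $C:=\mathbb R_+^N$, and that such automorphisms must permute its extreme rays. First I would record two elementary reductions. Since $W$ is invertible and $W(C)=C$, applying $W^{-1}$ gives $W^{-1}(C)=C$ as well; in particular the columns $We_i$ and $W^{-1}e_i$ all lie in $C$, so both $W$ and $W^{-1}$ are non-negative matrices. Next I would recall that the extreme rays of $C$ are exactly the coordinate rays $\mathbb R_+e_i$: a nonzero $x\in C$ generates an extreme ray precisely when it has a single nonzero coordinate, because any $x$ with two positive coordinates $x_i,x_j$ admits the decomposition $x=(x-x_ie_i)+x_ie_i$ into cone vectors, neither of which is a multiple of $x$.

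The core step is to show that $W$ sends each coordinate ray onto a coordinate ray. For this I would argue that $We_i$ generates an extreme ray: if $We_i=u+v$ with $u,v\in C$, then $e_i=W^{-1}u+W^{-1}v$ is a decomposition into cone vectors (here I use $W^{-1}(C)=C$), so extremality of $e_i$ forces $W^{-1}u,W^{-1}v\in\mathbb R_+e_i$, whence $u,v\in\mathbb R_+(We_i)$. Therefore $We_i$ is a positive multiple of some $e_{\pi(i)}$, say $We_i=d_ie_{\pi(i)}$ with $d_i>0$ (positivity because $We_i\neq 0$). Invertibility of $W$ makes the vectors $We_i$ linearly independent, so $\pi$ is injective, hence a permutation. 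Reading off columns, this says exactly $W=PD$ with $P$ the permutation matrix of $\pi$ and $D=\mathrm{diag}(d_1,\dots,d_N)$; then $W^{-1}=D^{-1}P^{-1}$ is again monomial with positive entries, in particular non-negative, which completes part (1).

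For part (2) I would simply feed in integrality. Writing $W=PD$, the nonzero entries of $W$ are precisely the $d_i$, so integrality of $W$ forces each $d_i$ to be a positive integer; integrality of $W^{-1}=D^{-1}P^{-1}$ forces each $d_i^{-1}$ to be a positive integer. Since $1$ is the only positive integer with integer reciprocal, $d_i=1$ for all $i$, hence $D=I$ and $W=P$ (and $W^{-1}=P^{-1}$) is a permutation matrix.

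I expect the main obstacle to be the rigidity step, namely proving that $W$ actually permutes the coordinate rays rather than merely being non-negative with non-negative inverse; the extreme-ray transport argument above is the crux, and the one point that needs care is the correct use of $W^{-1}(C)=C$ (not just $W(C)=C$) to push the decomposition of $We_i$ back to a decomposition of $e_i$. Everything else --- the characterization of extreme rays and the integrality bookkeeping --- is routine.
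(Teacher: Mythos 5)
Your proof is correct, but it reaches the key structural conclusion by a genuinely different route from the paper. The paper, like you, gets $W^{-1}\geq 0$ from surjectivity of $W$ on the orthant, but it then derives the monomial form $W=P\cdot D$ purely algebraically, by ``evaluating $WW^{-1}=I$'': for $i\neq j$ the entry $\sum_k w_{ik}(W^{-1})_{kj}=0$ is a sum of non-negative terms, so every product vanishes, while $\sum_k w_{ik}(W^{-1})_{ki}=1$ produces for each $i$ an index $k$ with $w_{ik}>0$ and $(W^{-1})_{ki}>0$; combining the two facts shows each column of $W$ has exactly one nonzero entry, and invertibility makes $W$ monomial. You instead argue geometrically: the coordinate rays are exactly the extreme rays of the cone $\mathbb R_+^N$, and a linear automorphism of the cone must permute extreme rays, the crucial (and correctly executed) point being the use of $W^{-1}(\mathbb R_+^N)\subseteq\mathbb R_+^N$ to pull a decomposition of $We_i$ back to a decomposition of $e_i$. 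Both arguments are complete; the paper's is a two-line entrywise computation that uses only $W\geq 0$ and $W^{-1}\geq 0$, whereas yours is more conceptual and generalizes verbatim to linear automorphisms of more general closed pointed cones, at the modest cost of first characterizing the extreme rays (your verification that $e_i$ itself is extreme is left implicit but is immediate). Part (2), the integrality bookkeeping, is the same in both treatments.
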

\begin{proof}
By assumption, $W$ is a linear map mapping the first orthant onto itself bijectively. Hence $W^{-1}$ is also a non-negative matrix.
The first statement follows from evaluating the equation $W W^{-1} = I.$
The second then is obvious.
\end{proof}
We state a generalization of \ref{trafolem} that requires less restrictive hypotheses.
\begin{proposition} \label{weaktrafolem}
    Let $A$ be an irreducible non-negative $N\times N$ matrix with integer entries and spectral radius $\rho$. Moreover let $W$ be a non-negative matrix with integer entries which is invertible as a rational matrix and such that 
    \begin{equation}\label{conjugatemat}
        B:=WAW^{-1}
    \end{equation} is non-negative with integer entries, and irreducible. Let $y=(y_1,\ldots,y_n)^{\rm tr}$ be a positive (right) eigenvector of $A$, defining the prototile lengths for some inflation-substitution iteration. Then the entries of $Wy$ form a set of prototile lengths for an inflation-substitution iteration with partition matrix $B$. Each of the latter prototiles is an essentially disjoint union of some of the former.
\end{proposition}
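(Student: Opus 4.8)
The plan is to identify $Wy$ as a positive Perron eigenvector of $B$, so that its entries are legitimate tile lengths, and then to read the asserted subdivision directly off the non-negativity and integrality of $W$.

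First I would record that, since $A$ is irreducible non-negative and $y$ is a positive eigenvector, Theorem \ref{perfrothm}(d) forces $y$ to be associated with the Perron--Frobenius eigenvalue, so $Ay=\rho y$. Putting $z:=Wy$, a one-line computation gives $Bz=WAW^{-1}Wy=WAy=\rho Wy=\rho z$, so $z$ is an eigenvector of $B$ for $\rho$; it is nonzero because $W$ is invertible and $y\neq 0$, and non-negative because $W\geq 0$ and $y>0$. The decisive step is to upgrade ``non-negative'' to ``positive''. As $B=WAW^{-1}$ is similar to $A$, it has the same spectrum, so $\rho$ is again the spectral radius; being irreducible non-negative, $B$ has $\rho$ as its Perron eigenvalue, whose eigenspace is one-dimensional and spanned by a strictly positive vector (Theorem \ref{perfrothm}(b),(c)). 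Hence $z$ is a scalar multiple of that positive vector, and since $z$ is nonzero and non-negative the scalar is positive; thus every entry of $z=Wy$ is strictly positive. This is the one genuine point of the proof, and it is exactly where irreducibility of $B$ is indispensable: without it $Wy$ could acquire a vanishing coordinate and fail to define a tile.

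With $z>0$ established, the remainder is bookkeeping. Written coordinatewise, $\rho z=Bz$ is precisely the partition relation \eqref{partimat}, now with non-negative integer coefficients $B_{ij}$ and with $\rho>1$ inherited from $A$ by similarity; hence the intervals $R_i':=[0,z_i]$ constitute a set of prototiles for an inflation-substitution iteration with partition matrix $B$, in the sense of Definition \ref{def:ins}. Finally, the coordinate identity $z_i=(Wy)_i=\sum_j W_{ij}\,y_j$ with $W_{ij}\in\mathbb Z_{\geq 0}$ says that the interval of length $z_i$ may be cut, by laying subintervals end to end, into $W_{ij}$ tiles of length $y_j$ for each $j$. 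That is exactly the statement that each new prototile $R_i'$ is an essentially disjoint union of the old prototiles, with multiplicities given by the $i$-th row of $W$. Beyond the positivity step I anticipate no obstacle, since everything else is a direct translation of length identities into geometric subdivisions.
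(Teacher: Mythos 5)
Your argument is correct and is exactly the route the paper takes: the paper's entire proof is the remark that $Wy$ is a positive eigenvector of $B$, and you have simply filled in the details (Perron--Frobenius for the irreducible matrix $B$ to upgrade $Wy\geq 0$ to $Wy>0$, then reading the subdivision off the non-negative integer entries of $W$). No gaps.
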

The proof is obvious, since $Wy$ is a positive eigenvector of $B$. 
\begin{remark}
\begin{enumerate}
\item By Theorem \ref{perfrothm}  the vector $y$ is uniquely determined up to a positive factor.
\item This result may be seen to represent a natural relation between the prototiles and, by extension, between the tilings associated to $A$ and $B$ respectively.
\item For any two matrices $W$ and $\hat{W}$
satisfying all assumptions of the proposition, we see that $Z = W^{-1} \hat{W}$ is a rational matrix commuting with $A$. If characteristic polynomial and minimal polynomial of $A$ coincide, then $Z$ is a linear combination of powers of $A$, as known from linear algebra.
\end{enumerate}
\end{remark} 
Proposition \ref{weaktrafolem} assumes the existence of a matrix $W$ such that \eqref{conjugatemat} holds, but says nothing about possible constructions. We turn to this topic now, with some additional requirements. We will freely use the companion matrix types from Remark \ref{companiontypes}, but we will restrict attention to non-negative real matrices.
\begin{theorem} \label{conjAtoC_P}
Let $A$ be an irreducible, non-negative, integer $N \times N-$matrix that is invertible over $\mathbb Q$.
Assume that its characteristic polynomial has the form
\[
P=X^N-\sum b_jX^{N-j}
\]
with integers $b_j\geq 0$, and is equal to its minimal polynomial.
Then there  exist  non-negative  integer matrices $W$ and $\hat{W}$ which  are invertible considered as  rational matrices 
\begin{equation} \label{fromAtoC}
    C_P = W A W^{-1}  \hspace{2mm} \mbox{and} \hspace{2mm} 
   C_P^{tr} = \widehat{W}^{-1} A \widehat{W},
\end{equation}
where $C_P$ is given in \eqref{CPmat} with $b_j$ replacing $c_j$, $1\leq j\leq N$, and also for its transpose in \eqref{CPtrans}; see Remark \ref{companiontypes}.
Furthermore, $C_P$ is an irreducible, non-negative, integer matrix with characteristic polynomial
$P_A$. 
\end{theorem}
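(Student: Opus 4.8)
The plan is to construct the two conjugating matrices explicitly from cyclic vectors, exploiting the hypothesis that $A$ is non-derogatory. Since the characteristic polynomial $P$ of $A$ coincides with its minimal polynomial, the $\mathbb Q[X]$-module $\mathbb Q^N$ (with $X$ acting as $A$) is cyclic, so $A$ possesses a \emph{right cyclic vector} $u$, i.e.\ one for which $u, Au,\ldots, A^{N-1}u$ are linearly independent. Applying the same remark to $A^{\mathrm{tr}}$, which shares both its characteristic and its minimal polynomial with $A$, produces a \emph{left cyclic vector} $v$, i.e.\ one for which $v^{\mathrm{tr}}, v^{\mathrm{tr}}A,\ldots, v^{\mathrm{tr}}A^{N-1}$ are independent.

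First I would let $\widehat W$ be the matrix with columns $u, Au,\ldots, A^{N-1}u$. Reading off the columns of $A\widehat W$ and invoking the Cayley--Hamilton relation $A^N=\sum_{j=1}^N b_j A^{N-j}$, each column $A^k u$ with $k<N$ is the next column of $\widehat W$, while the last column expands as $A^N u = b_N u + b_{N-1}Au + \cdots + b_1 A^{N-1}u$. Comparing with \eqref{CPtrans} (with $b_j$ for $c_j$) gives $A\widehat W = \widehat W\, C_P^{\mathrm{tr}}$, hence $C_P^{\mathrm{tr}} = \widehat W^{-1} A \widehat W$. Symmetrically, taking $W$ to be the matrix whose rows are $v^{\mathrm{tr}}, v^{\mathrm{tr}}A,\ldots, v^{\mathrm{tr}}A^{N-1}$ and running the same computation on rows yields $WA = C_P W$, that is $C_P = WAW^{-1}$, now matching \eqref{CPmat}. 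Invertibility of $\widehat W$ and $W$ over $\mathbb Q$ is precisely the cyclicity of $u$ and $v$.

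The crucial point, and the main obstacle, is to arrange that $W$ and $\widehat W$ are \emph{simultaneously} non-negative, integral, and invertible. Non-negativity and integrality are automatic: since $A$ has non-negative integer entries and $u,v$ will be chosen in $\mathbb N_0^N$, every product $A^k u$ and $v^{\mathrm{tr}}A^k$ again has non-negative integer entries. Thus the only requirement is that the cyclic vectors $u,v$ be found inside $\mathbb N_0^N$. To secure this I would note that the cyclicity condition $\det\bigl[u\mid Au\mid\cdots\mid A^{N-1}u\bigr]\neq 0$ is the non-vanishing of a fixed polynomial in the entries of $u$, and that this polynomial is not identically zero because a cyclic vector exists. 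A nonzero polynomial cannot vanish on all of $\mathbb N_0^N$ (which is Zariski dense in $\mathbb A^N$, as one sees by an immediate induction on the number of variables using that a nonzero one-variable polynomial has only finitely many roots), so a non-negative integer right cyclic vector $u$ exists; the identical argument applied to $A^{\mathrm{tr}}$ furnishes $v$.

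Finally I would record the remaining assertions about $C_P$. Its entries are the $b_j\geq 0$ together with the off-diagonal $1$'s, so $C_P$ is a non-negative integer matrix, and by its companion form its characteristic polynomial is $P=P_A$. Since $A$ is invertible over $\mathbb Q$ we have $\det A\neq 0$, whence $b_N=\pm\det A\neq 0$, so in fact $b_N\geq 1$; the resulting nonzero entry in position $(N,1)$ together with the superdiagonal of $1$'s makes the digraph of $C_P$ strongly connected, i.e.\ $C_P$ is irreducible.
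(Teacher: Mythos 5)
Your proposal is correct and follows essentially the same route as the paper: both build $W$ (rows $v^{\mathrm{tr}}A^{k}$) and $\widehat W$ (columns $A^{k}u$) from cyclic vectors, which exist because $A$ is non-derogatory, and both secure non-negativity and integrality by choosing the cyclic vectors in $\mathbb N_0^N$ via the non-vanishing of the Krylov determinant polynomial. The only difference is presentational: you verify the conjugation identities by Cayley--Hamilton and the irreducibility of $C_P$ by the strongly connected cycle $1\to 2\to\cdots\to N\to 1$ directly, where the paper cites Prokip and Horn--Johnson for these steps.
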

\begin{proof}
It is well known (see e.g \cite{Prokip}, Theorem 1), that there exists an invertible rational matrix 
$W$ such that $C_P = W A W^{-1}$, viz.\ $W = W(u)$ defined by $W^{tr} = (u, A^{tr} u, \dots, (A^{tr})^{N-1} u),$ with 
$u \in \mathbb{Q}^N$ such that $\det(W(u))  \neq 0.$ Since the determinant condition can also be satisfied with some $u>0$, we may assume that $u$ has non-negative integer entries, and we  obtain that $W$ is a non-negative integer matrix, and 
invertible as a rational matrix. Likewise, there exists a vector $v$ with non-negative integer entries such that the non-negative integer matrix  $\widehat W:=(v,Av,\ldots,A^{N-1}v)$ is invertible over $\mathbb Q$. By construction this matrix satisfies $A\widehat W=\widehat W C_P^{\rm tr}$.\\
The irreducibility of $C_P$ follows from the invertibility of $A$, which implies the invertibility of $C_P$, equivalently $b_N\not=0$, and the criterion in \cite{HoJo}, Theorem 6.2.24, part (c).
\end{proof}


\subsubsection{Application to partition matrices and related tilings}
\begin{example}
In previous subsections we discussed inflation-substitution iterations from companion matrices of the special type
\[
C_{DW}=\begin{pmatrix} b_1&b_2&\cdots&\cdots&b_N\\ 
            1 & 0& & &0\\
        0&\ddots&  & & 0\\
        \vdots & & & &\vdots\\
    0&\cdots & & 1&0
\end{pmatrix}
\]
with non-negative integer entries $b_1,\ldots,b_N$.
But one also obtains inflation-substitution constructions 
for the other types of companion matrices  noted in Remark \ref{companiontypes}. We consider explicitly the constructions  for 
    \begin{equation*}
C_{DW}^{\rm tr}=\begin{pmatrix} b_1&1&0&\cdots&0\\ 
            b_2 & 0& 1& &0\\
        \vdots& \vdots& \ddots & \ddots& 0\\
        b_{N-1} &0 &\cdots &0 &1\\
    1&0 & \cdots&\cdots &0
\end{pmatrix}.
\end{equation*}
Straightforward application of the inflation-substitution procedure yields one iteration by setting, for $i<N$,
\[
\rho R_i= \left\{\begin{array}{ccccc}
    R_1|R_{i+1}& & \text{if}& & b_i=1;\\
    R_{i+1} & & \text{if} & & b_i=0.
\end{array}
\right.
\]
and finally $\rho R_N=R_1$. 
Every other iteration rule is obtained by modification of this one by stipulating $\rho R_i=R_{i+1}|R_1$ for certain indices $i<N$ with $b_i=1$. 
\end{example}
The following is the main result of the present subsection.
\begin{theorem} \label{conjAtoB}
    Let $A$ and $B$ be irreducible non-negative integer matrices such that their common characteristic polynomial equals their minimal polynomial, and has the form
$P(X)=X^N-\sum b_jX^{N-j}$ with non-negative integers $b_j$, and $b_N\not=0$.
Then there exist non-negative integer matrices $W$, $\widehat{W}$ and $T$ which are invertible as rational matrices such that
    \begin{equation}
        B (\widehat{W} T W) =  (\widehat{W} T W) A.
    \end{equation}
    An analogous equation holds with $A$ and $B$ interchanged.
\end{theorem}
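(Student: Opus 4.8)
The plan is to build the single intertwiner $\widehat W\,T\,W$ by composing the three conjugations that have already been set up in this subsection: one routing $A$ to the companion matrix $C_P$, one routing $C_P$ to its transpose $C_P^{\rm tr}$, and one routing $C_P^{\rm tr}$ back to $B$. The key observation making this work is that each of the three intermediate conjugating factors can be taken simultaneously non-negative, integer, and invertible over $\mathbb Q$, so that their product keeps all three properties.

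First I would apply Theorem \ref{conjAtoC_P} to $A$. Its hypotheses are exactly those assumed here: $A$ is irreducible, non-negative, integer, with characteristic polynomial equal to its minimal polynomial of the form $P(X)=X^N-\sum b_jX^{N-j}$, $b_j\geq 0$; and $A$ is invertible over $\mathbb Q$, since $b_N\neq 0$ forces $\det A=\pm b_N\neq 0$. This furnishes a non-negative integer matrix $W$, invertible over $\mathbb Q$, with $C_P=WAW^{-1}$, equivalently $C_PW=WA$. Next I would invoke the Proposition yielding \eqref{conjcomp}: because $b_N\neq 0$ the matrix $C_P$ from \eqref{CPmat} (with $b_j$ in place of $c_j$) is invertible, so $C_P^{\rm tr}=TC_PT^{-1}$ holds with $T$ the matrix in \eqref{Tmatrix}; as the $b_j$ are non-negative integers, $T$ is a non-negative integer matrix, invertible over $\mathbb Q$ since $\det T=\pm b_N^{N-1}\neq 0$. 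In intertwining form this reads $C_P^{\rm tr}T=TC_P$. Finally I would apply Theorem \ref{conjAtoC_P} to $B$, taking its \emph{second} conclusion: there is a non-negative integer matrix $\widehat W$, invertible over $\mathbb Q$, with $C_P^{\rm tr}=\widehat W^{-1}B\widehat W$, equivalently $B\widehat W=\widehat W\,C_P^{\rm tr}$.

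Stringing these three relations together gives
\[
B\,(\widehat W\,T\,W)=(B\widehat W)\,T\,W=\widehat W\,C_P^{\rm tr}\,T\,W=\widehat W\,(TC_P)\,W=\widehat W\,T\,(C_PW)=\widehat W\,T\,(WA)=(\widehat W\,T\,W)\,A,
\]
which is the asserted identity. The matrix $\widehat W\,T\,W$ is non-negative and integer, being a product of such matrices, and invertible over $\mathbb Q$, being a product of $\mathbb Q$-invertible matrices. The version with $A$ and $B$ interchanged follows verbatim by swapping the roles of $A$ and $B$ throughout.

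I do not expect a genuine obstacle, since the substantive content is already carried by Theorem \ref{conjAtoC_P} and the Proposition preceding it; the construction here is purely a bookkeeping assembly of existing conjugations. The only point demanding care is to confirm that the three factors $W$, $T$, $\widehat W$ can be chosen \emph{at once} non-negative, integer, and $\mathbb Q$-invertible, so that the composite does not forfeit any of these properties. This is immediate: products of non-negative integer matrices are again non-negative integer, and products of $\mathbb Q$-invertible matrices are again $\mathbb Q$-invertible. If there is any delicacy at all, it lies one level down, inside Theorem \ref{conjAtoC_P}, in guaranteeing that the Krylov-type conjugators $W$ and $\widehat W$ may be realized with non-negative integer entries rather than merely rational ones.
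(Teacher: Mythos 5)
Your proposal is correct and is essentially identical to the paper's own proof: both assemble the intertwiner by chaining $WA=C_PW$ and $B\widehat W=\widehat W C_P^{\rm tr}$ from Theorem \ref{conjAtoC_P} with $TC_P=C_P^{\rm tr}T$ from \eqref{conjcomp}, and then compute $B\widehat WTW=\widehat WC_P^{\rm tr}TW=\widehat WTC_PW=\widehat WTWA$. Your closing observation that the only real delicacy sits inside Theorem \ref{conjAtoC_P} (realizing the Krylov conjugators with non-negative integer entries) is also accurate.
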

\begin{proof}
Choose $W$ and $\widehat{W}$, according to Theorem \ref{conjAtoC_P}, with $WA=C_PW$ and $B\widehat W=\widehat WC_P^{\rm tr}$, and $T$ as in $(\ref{conjcomp}$, satisfying $TC_P=C_P^{tr} T$.
Then we infer
\[
\widehat WTWA=\widehat WTC_PW=\widehat WC_P^{\rm tr}TW=B\widehat WTW,
\]and the claim follows.
\end{proof}
\begin{corollary}
Let $A$ and $B$ be irreducible non-negative integer matrices such that their common characteristic polynomial equals their minimal polynomial, and is of the form $X^N-\sum b_jX^{N-j}$ with non-negative integers $b_j$,  $b_N\not=0$. Then, given any set of prototiles for $B$, their lengths are (up to scaling) non-negative integer linear combinations of prototile lengths for any tiling obtained from $A$.
\end{corollary}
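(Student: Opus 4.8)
The plan is to reduce the statement to Proposition~\ref{weaktrafolem} by manufacturing a \emph{single} non-negative integer matrix that conjugates $A$ into $B$. Since $A$ and $B$ share the characteristic polynomial $P(X)=X^N-\sum b_jX^{N-j}$ with $b_N\neq 0$, both are invertible over $\mathbb Q$, and all hypotheses of Theorem~\ref{conjAtoB} are met. That theorem supplies non-negative integer matrices $W,\widehat W,T$, each invertible as a rational matrix, with $B(\widehat W T W)=(\widehat W T W)A$. Setting $M:=\widehat W T W$ I obtain a non-negative integer matrix, invertible over $\mathbb Q$ (being a product of such), satisfying $BM=MA$, equivalently $B=MAM^{-1}$.

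Next I would apply Proposition~\ref{weaktrafolem} with the conjugating matrix there taken to be $M$: both $A$ and $B$ are irreducible non-negative integer matrices, $M$ is non-negative integer and rationally invertible, and $B=MAM^{-1}$. Let $\rho$ denote the common spectral radius and let $y_A>0$ be a Perron eigenvector of $A$, i.e.\ a vector of prototile lengths for some tiling obtained from $A$. Then $B(My_A)=MAy_A=\rho\,(My_A)$, so $My_A$ lies in the $\rho$-eigenspace of $B$. By Theorem~\ref{perfrothm} this eigenspace is one-dimensional and spanned by a positive vector; since $M\geq 0$ is rationally invertible and $y_A>0$, the vector $My_A$ is non-negative and nonzero, hence a \emph{positive} multiple of the Perron eigenvector of $B$. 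Thus the entries of $My_A$ constitute a valid set of prototile lengths for $B$, and by construction the $i$-th such length equals $\sum_j M_{ij}(y_A)_j$, a non-negative integer combination of the prototile lengths of $A$.

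Finally I would dispose of the quantifiers ``any set of prototiles for $B$'' and ``any tiling obtained from $A$''. By the simplicity of the Perron eigenvalue (Theorem~\ref{perfrothm}, part b), the prototile lengths attached to $B$ are determined up to a single positive scalar, and likewise for $A$; hence any prescribed length vector for $B$ is a positive multiple of $My_A$, and any length vector for $A$ is a positive multiple of $y_A$. Absorbing these two scalars into the ``up to scaling'' clause yields the assertion in full generality. The only point requiring genuine care is the sign bookkeeping of the previous paragraph---verifying that the conjugated eigenvector $My_A$ is a \emph{strictly} positive multiple, not merely a non-negative one, of the Perron vector of $B$, which is what makes the integer coefficients $M_{ij}$ into legitimate prototile-length data rather than a degenerate combination; this is exactly the content guaranteed by Proposition~\ref{weaktrafolem}, so no separate argument is needed. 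The substantive work has already been carried out in Theorem~\ref{conjAtoB}, and the present corollary is essentially its geometric translation.
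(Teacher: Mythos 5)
Your proposal is correct and follows essentially the same route as the paper: both obtain the single non-negative integer matrix $M=\widehat W T W$ from Theorem~\ref{conjAtoB} and conclude from $BMw=MAw=\rho\,Mw$ that $Mw$ is a Perron eigenvector of $B$, so that the entries of $Mw$ are non-negative integer combinations of the prototile lengths for $A$. Your extra care in verifying that $Mw$ is a \emph{strictly positive} multiple of the Perron vector of $B$ (via simplicity of the Perron eigenvalue and the rational invertibility of $M$) is a small but welcome refinement of a point the paper leaves implicit.
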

\begin{proof}
Due to the correspondence of prototile lengths and entries of positive eigenvectors, we need to show the following: Given a positive right eigenvector $w$ of $A$, $Aw=\rho w$ with the spectral radius $\rho$, some non-negative integer linear combinations of the entries of $w$ are entries of a positive  eigenvector of $B$. In other words, we need to show that there is a non-negative integer matrix $M$ such that $Mw$ is an eigenvector of $B$. But by Theorem \ref{conjAtoB} there exists a non-negative integer matrix $M$ such that $BM=MA$, which implies $BMw=MAw=\rho\,Mw$.
\end{proof}
\begin{remark}
Keeping the assumptions of the corollary, one obtains prototiles for $B$ from any set of prototiles for $A$ by joining suitable numbers of these. Note that one may reverse the roles of $A$ and $B$, so the statement goes both ways. From this perspective one gains a partial understanding of tilings constructed from matrices $A$ that satisfy the hypotheses of Theorem \ref{conjAtoB}. Prototiles constructed from  a conjugate companion matrix are well understood (see Lemma \ref{lem:evec}), and prototiles for $A$ are obtained by joining these in a suitable way.
\end{remark}
\begin{remark}
    However, the hypothesis that not only $A$, but also its companion matrix is non-negative,  imposes a restrictive additional requirement. (One verifies, however, that the set of non-negative matrices with this property contains a non-empty open subset of $\mathbb R^{n\times n}$.) If this is not satisfied, then $A$ still has an integer companion matrix. Theorem  \ref{conjAtoB} holds with integer matrices $W$, $\widehat W$ and $T$, and the former two are non-negative, but not $T$. Thus there is no immediate interpretation with regard to tilings, and more general interpretations (should they exist) would lead to different questions.
\end{remark}

We close this section with one example where the conjugating matrix is obtained by inspection, not via the argument in the proof of Theorem \ref{conjAtoC_P}.
\begin{example}
Let $c>1$ be an integer and
\[
A:=\begin{pmatrix}
    1&c&1\\ 1&0&0\\ 0&1&0
\end{pmatrix}, \quad S:=\begin{pmatrix}
    1&0&0\\ 0&1&0\\ 1&1&1
\end{pmatrix}\quad\text{with  }S^{-1}=\begin{pmatrix}
    1&0&0\\ 0&1&0\\ -1&-1&1
\end{pmatrix}.
\]
Here one computes
\[
B:=SAS^{-1}=\begin{pmatrix}
    0&c-1&1\\ 1&0&0\\ 1&c&1
\end{pmatrix}.
\]
With a positive eigenvector $w$ of $A$ one obtains the positive eigenvector $Sw$ of $B$, and thus from prototiles corresponding to $A$ one obtains prototiles for $B$.
\end{example}


\subsection{Non-periodicity}

The first proof of non-periodicity for inflationary tilings seems to go back to Penrose \cite{Pen}, \cite{PenPen}; see Resnikoff \cite{Res}. The following proof for tilings of the half-line uses properties of non-negative matrices.
We recall  Definition \ref{def:inscon} and Theorem \ref{conthm}.
\begin{theorem} [Penrose's proof by irrationality]\label{penrose} A tiling $(S_i)_{i \in \mathbb N}$ constructed from 
a primitive non-negative integer matrix $U$ with irrational spectral radius $\rho$  is not periodic.
\end{theorem}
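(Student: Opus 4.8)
The plan is to argue by contradiction, extracting from periodicity a \emph{rational} Perron eigenvector of $U$ and thereby forcing $\rho$ to be rational. First I would fix notation: write the tiling as $(S_i)_{i\in\mathbb N}$ with prototiles $R_1,\ldots,R_N$ of pairwise distinct lengths, so that two tiles are equivalent precisely when they are of the same type. Suppose, for contradiction, that the tiling is periodic with some period $m$ in the sense of Definition \ref{defperiodic}. Then the sequence of prototile types is periodic with period $m$; let $n_j\ge 0$ denote the number of tiles of type $R_j$ occurring in one period, so that $\sum_j n_j=m$. Counting occurrences in the prefix consisting of the first $\ell$ tiles, periodicity yields exactly $\tfrac{n_j}{m}\ell+O(1)$ tiles of type $R_j$; hence the frequency of $R_j$ among the first $\ell$ tiles converges to the rational number $n_j/m$ as $\ell\to\infty$.

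Next I would feed in the inflation-substitution structure recalled in Theorem \ref{conthm}. The tiling is the limit of the iterates $\mathcal X_\ell$ (or of a subsequence $\mathcal X_{1+dm}$), and by Proposition \ref{stableprop} the initial strings $\mathcal S_\ell={\rm ins}^{\ell}(\mathcal S_1)$ are prefixes of the tiling whose lengths tend to infinity. By Lemma \ref{lem:count}, the vector $\mathbf c^{(\ell)}$ recording the number of occurrences of each prototile in $\mathcal S_\ell$ equals $(U^{\rm tr})^{\ell}\mathbf c^{(0)}$, where $\mathbf c^{(0)}$ is the nonzero, non-negative count vector of $\mathcal S_1$. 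Since $U$ is primitive, so is $U^{\rm tr}$ by Lemma \ref{trafolem}; applying \eqref{Alim} of Corollary \ref{CortoPrim} to $U^{\rm tr}$ gives
\[
\Bigl(\tfrac1\rho U^{\rm tr}\Bigr)^{\ell}\mathbf c^{(0)}\xrightarrow[\ell\to\infty]{}\frac{w^{\rm tr}\mathbf c^{(0)}}{w^{\rm tr}v}\,v,
\]
where $v$ is a left Perron eigenvector of $U$ (equivalently a right Perron eigenvector of $U^{\rm tr}$) and $w$ a right Perron eigenvector of $U$; both are strictly positive by Theorem \ref{perfrothm}, so $w^{\rm tr}\mathbf c^{(0)}>0$ and the limiting direction is genuinely $v$. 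Consequently the normalized count vectors $\mathbf c^{(\ell)}/\sum_i c^{(\ell)}_i$ converge to $\hat v:=v/\sum_i v_i$, which has strictly positive entries; that is, along the prefix lengths $|\mathcal S_\ell|$ the frequency of $R_j$ converges to $\hat v_j$.

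Now I would match the two limits. Periodicity makes the frequency of $R_j$ converge along \emph{every} sequence of prefix lengths tending to infinity, in particular along $|\mathcal S_\ell|$; comparing with the previous paragraph forces $n_j/m=\hat v_j$ for every $j$. Thus $\hat v=(n_1/m,\ldots,n_N/m)^{\rm tr}$ is a rational left Perron eigenvector of $U$, satisfying $\hat v^{\rm tr}U=\rho\,\hat v^{\rm tr}$. Since $U$ has integer entries and $\hat v$ is rational, the $j$-th component reads $\rho\,\hat v_j=\sum_i \hat v_i u_{ij}\in\mathbb Q$; choosing any $j$ (all $\hat v_j>0$) yields $\rho\in\mathbb Q$, contradicting the irrationality of $\rho$ and completing the proof.

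I expect the main obstacle to be precisely this matching step. The Perron--Frobenius limit only controls the sparse subsequence of prefix lengths $|\mathcal S_\ell|$, which grow like $\rho^{\ell}$, whereas periodicity controls \emph{all} prefixes. The point to get right is that a periodic tiling has a genuine limiting frequency along every subsequence, so the subsequential limit $\hat v_j$ must coincide with the overall limit $n_j/m$; along the way one must ensure $|\mathcal S_\ell|\to\infty$ and that the limiting direction does not degenerate, which is exactly where the positivity $w^{\rm tr}\mathbf c^{(0)}>0$ is used.
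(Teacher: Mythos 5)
Your proposal is correct and follows essentially the same route as the paper's proof: both extract from periodicity a rational limiting frequency vector for the prototiles, identify it via Lemma \ref{lem:count} and the Perron--Frobenius limit with the Perron direction of $U^{\rm tr}$, and conclude that $\rho$ would have to be rational. The only cosmetic difference is that you count occurrences by tile index along the prefixes $\mathcal S_\ell$, whereas the paper counts full prototiles contained in intervals $[0,b]$ after normalizing the period length to $1$ and then specializes to $b=\rho^k$.
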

\begin{proof} Assume that the tiling is periodic. Then there exists a subinterval $J=[0,x_0]$, corresponding to some initial string, such that all integer multiples of $x_0$ are endpoints of tiles, and that $x_0$ is the smallest such number. By appropriate scaling of the basic length $L_0$ we may assume  $x_0=1$.
\begin{enumerate}[1.]
\item Let $R_1,\ldots,R_N$ be the prototiles of the tiling. For each $j \in \{1,\ldots,N\}$ denote by $d_j$ the number of occurrences of $R_j$ in $J = [0,1]$, and set $f_j=d_j/(\sum d_k)\in \mathbb Q$ . Note, summation in the denominator will always be from $1$ to $N$. We observe $1 = \sum_{j=1}^N d_j L(R_j).$
Now consider for each $j \in \{1,\ldots, N \}$  the step-function
 \begin{equation}
 n_j : [0, \infty[ \rightarrow \mathbb{N} \cup \{ 0 \}, 
 \end{equation}
 with $n_j(b)$ defined as the number of times the closed prototile $R_j$ is fully contained in the interval $[0, b]$, with $b$ any non-negative  real number.
Since $1 = x_0$, for integers $m$ we obtain
\[
n_j(m)=m\cdot d_j, \quad 1\leq j\leq N,\quad \dfrac{n_j(m)}{\sum n_k(m)}=f_j,
\]
by periodicity. 
 Writing  any non-negative real number  $b$ in the form $b = m + \hat{b}$ with an integer $m$ and 
 $0\leq \hat{b} < 1$, we have, since with $x_0 = 1$ no positive integer is contained inside a prototile,
 \[
 n_j(b)=n_j(m)+n_j(\hat b)=m\cdot d_j+n_j(\hat b); \quad  0\leq n_j(\hat b) \leq d_j,
 \]
and we obtain the fraction 
  \begin{equation}
 \dfrac{n_j (b)}{\sum n_kb)} = \dfrac{md_j + n_j (\hat{b})}{m\sum d_k + \sum n_k(\hat{b})} =\dfrac{d_j + n_j (\hat{b})/m}{\sum d_k + \sum n_k(\hat{b})/m}.
 \end{equation}
Since each $n_k(\hat{b})$ is bounded by $d_k$, the limit as 
 $b \rightarrow \infty$ exists, and we obtain:
\begin{equation}\label{ratlimneu}
  \lim_{b\to\infty}\dfrac{n_j (b)}{\sum n_k(b)}=f_j\in\mathbb Q, \quad 1\leq j\leq N.
\end{equation}
    \item Now we use Lemma \ref{lem:count}. 
    Since $R_k$ is contained in the starting interval $J = [0,1]$ for $d_k$ times, the occurrences of the prototiles in the inflated first interval $\rho J$ are given by 
    \[
    U^{\rm tr}\cdot \begin{pmatrix}
        d_1\\ \vdots \\d_N
    \end{pmatrix},
    \]
    and $k$-fold iteration yields 
    \begin{equation} \label{iterationfrequencesinU}
       d^{(k)} =\begin{pmatrix}
           d_1^{(k)}\\ \vdots \\  d_N^{(k)}
       \end{pmatrix} = (U^{\rm tr})^k d^{(0)} ,
      \end{equation}
 with $d^{(0)} = (d_1, \dots d_N)^{tr}$.

In terms of $n_j$ we obtain for    $b = \rho^k$ the relation 
 $n_j(\rho^k) = d_j^{(k)}$.
 Inserting this into equation $(\ref{ratlimneu})$ we thus find
      \begin{equation}
     \frac{d^{(k)}_i}{\sum_j d_j^{(k)}}        \xrightarrow[k \to \infty]{}    \frac{d_i}{\sum_j d_j} =f_i, 
\end{equation}
    or, in vector form,
       \begin{equation} \label{translatesection1}
   \frac{d^{(k)}}{\sum_j d_j^{(k)}}        \xrightarrow[k \to \infty]{}    \frac{d^{(0)}}{\sum_j d_j} =:f. 
 \end{equation} 
 \item We apply Corollary \ref{limcor} to the matrix $A:=\dfrac1\rho U^{\rm tr}$, with positive eigenvector $v$ for $\rho$. Since $\rho$ is irrational, no nonzero multiple of $v$ has rational entries. In particular we have that 
 \[
 A^k\,\begin{pmatrix}
        d_1\\ \vdots \\d_N
    \end{pmatrix}
    \to \mu\,v
 \]
 for some $\mu\not=0$.
   \item On the other hand, we have
   \[
   A^k\,\begin{pmatrix}
        d_1\\ \vdots \\d_N
    \end{pmatrix}  =\dfrac{\sum_j d_j^{(k)} }{\rho^k}\cdot\left({U^{\rm tr}}\right)^k\left(\dfrac{1}{\sum_j d_j^{(k)}} d^{(k)}\right)
   \]
   Here the second term on the right-hand side converges to $f$, and from $f\not=0$ and  $\mu v\not=0$ one infers that the factor $\sum_j d_j^{(k)}/\rho^k$ converges to some nonzero real number.\\
   Altogether we see that $f\in \mathbb R v$; a contradiction, since the entries of $f$ are rational.
This proves the claim.
 \end{enumerate}
\end{proof}

\end{document}